\newtheorem{theo}{Theorem}[section]
\newtheorem{coro}[theo]{Corollary}
\newtheorem{prop}[theo]{Proposition}
\newtheorem{lemm}[theo]{Lemma}
\newtheorem*{obse}{Observation}
\theoremstyle{definition}
\newtheorem{defi}[theo]{Definition}
\theoremstyle{remark}
\newtheorem{rema}[theo]{Remark}
\date{\today}
\newcommand{\bb}[1]{\mathbb{#1}}
\newcommand{\al}[1]{\mathcal{#1}}
\newcommand{\sr}[1]{\mathscr{#1}}
\newcommand{\ak}[1]{\mathfrak{#1}}
\newcommand{\gen}[1]{\left\langle #1\right\rangle}
\newcommand{\ra}{\rightarrow}
\newcommand{\lra}{\longrightarrow}
\begin{document}
\title{Hitchin Systems for Invariant and Anti-invariant vector bundles}
\author{ZELACI Hacen}
\address{Laboratoire de Math\'ematiques J.A. Dieudonn\'e.}
\curraddr{}
\email{z.hacen@gmail.com}
\date{\today}

\subjclass[2010]{Primary 14H60, 14H40, 14H70.}

\begin{abstract}
Given a smooth projective complex curve $X$ with an involution $\sigma$, we study the Hitchin systems for the locus of anti-invariant (resp. invariant) stable vector bundles over $X$  under $\sigma$. Using these integrable systems and the theory of the nilpotent cone,  we study the irreducibility of these loci. The anti-invariant locus can be thought of as a generalisation  of Prym varieties to  higher rank. 
\end{abstract}
\maketitle
\tableofcontents

\section{Introduction}
The Hitchin systems are integrable algebraic systems defined on the cotangent space of the moduli space of stable $G-$bundles on a Riemann surface.  They lie at the crossroads between algebraic geometry, the theory of Lie algebras and the theory integrable systems. Consider a  smooth projective irreducible curve $X$ of genus $g_X\geqslant2$. Hitchin in \cite{NH} has defined and studied some integrable systems related to the moduli space of stable $G-$bundles over $X$, where $G$ is a classical algebraic group ($\text{GL}_r$, $\text{Sp}_{2m}$ and $\text{SO}_r$). Let $\al M_X(G)$ be this moduli space, the tangent space to $\al M_X(G)$ at a point $[E]$ can be identified with $$H^1(X,\text{Ad}(E))\cong H^0(X,\text{Ad}(E)\otimes K_X)^*,$$ where $\text{Ad}(E)$ is the adjoint bundle associated to $E$, which is a bundle of Lie algebras isomorphic to $\ak g=\text{Lie}(G)$. By Serre duality, the fiber of the cotangent bundle is $H^0(X,\text{Ad}(E)\otimes K_X)$. By considering a basis of the invariant polynomials on $\ak g$, one gets a map $$T^*_E\al M_X(G)=H^0(X,\text{Ad}(E)\otimes K_X)\longrightarrow \bigoplus_{i=1}^k H^0(X,K_X^{d_i}),$$ where the $(d_i)_i$ are the degrees of these invariant polynomials. Hitchin has shown that these two spaces have the same dimension.\\ In the case $G=\text{GL}_r$, a basis of the invariant polynomials is given by the coefficients of the characteristic polynomial. If $E$ is a stable vector bundle, then this gives rise to a map $$\sr H_E:H^0(X,\text{End}(E)\otimes K_X)\lra \bigoplus_{i=1}^r H^0(X,K_X^i)=:W,$$ 
which associates to each Higgs field $\phi$, the coefficients of its characteristic  polynomial. The associated map $$\sr H:T^*\al M_X(\text{GL}_r)\lra W$$ is called the \emph{Hitchin morphism}. By choosing a basis of $W$, $\sr H$ is represented by  $d=r^2(g_X-1)+1$ functions $f_1,\dots, f_d$. Hitchin has proved that this system is  \emph{algebraically completely integrable}, i.e. its generic fiber is an open set in an abelian variety of dimension $d$, and the vector fields $\al X_{f_1},\dots,\al X_{f_d}$ associated to $f_1,\cdots,f_d$ (defined using the canonical $2-$form on $T^*\al M_X(\text{GL}_r)$) are linear.

Moreover, let $\al U_X(r,0)$ be the moduli space of stable vector bundles of rank $r$ and degree $0$ on $X$. Consider the map $$\Pi:T^*\al U_X(r,0)\ra \al U_X(r,0)\times W$$ whose first factor is the canonical projection and the second factor is $\sr H$. Then, in \cite{BNR}, it is proved that $\Pi$  is dominant. \\

   Suppose now that we have an involution $\sigma$  on $X$. It induces by pullback an involution on the moduli space of stable vector bundles $\al U_X(r,d)$. Let $E$ be a stable vector bundle, we say that $E$ is \emph{anti-invariant} if there exists an isomorphism $$\psi:\sigma^*E\stackrel{\sim}{\lra} E^*.$$ We say that the anti-invariant vector bundle $E$ is \emph{$\sigma-$symmetric} (resp. \emph{$\sigma-$alternating})  if $\sigma^*\psi=\,^t\psi$ (resp. $\sigma^*\psi=-\,^t\psi$). We denote by $\al U_X^{\sigma,+}(r)$ and $\al U_X^{\sigma,-}(r)$ the loci of $\sigma-$symmetric and $\sigma-$alternating anti-invariant vector bundles respectively. We define the invariant locus to be  $$\al U_X^\sigma(r,d)=\{[E]\in\al U_X(r,d)\,|\,\sigma^*E\cong E\}.$$ These varieties correspond to moduli spaces of the form $\al M_Y(\al G)$, i.e. moduli spaces of $\al G-$torsors over $Y:=X/\sigma$ for some particular type of group schemes $\al G$ (they are called parahoric Bruhat-Tits group schemes, see \cite{PR}, \cite{He} and \cite{BS}). 
   
   The main topic of this paper is the study of the Hitchin systems for the anti-invariant and the invariant loci. We use these systems to identify the connected components of $\al U_X^{\sigma,+}(r)$ and $\al U_X^{\sigma,-}(r)$. The irreducibility  of the invariant locus (of a fixed type) is already know in more general setting (see \cite{BS}).\\
In this paper,  by \emph{Prym variety} of a cover of curves $\pi:\bar X\ra \bar Y$ we mean the kernel of the norm map $\text{Nm}:J_{\bar X}\lra J_{\bar Y}$ attached to $\pi$, which is in general nonconnected (hence it is not an abelian variety).\\
Our main result, in the case of the anti-invariant vector bundles, can be formulated as follows 
\begin{theo}\label{main1.1}
Assume that $\pi:X\ra Y$ is ramified. We have the following results:
\begin{enumerate}
 \item There exists a linear subspace \mbox{$W^{\sigma,+}\subset W$}, such that $\Pi$ induces a dominant map $$\Pi:T^*\al U_X^{\sigma,+}(r)\longrightarrow \al U_X^{\sigma,+}(r)\times W^{\sigma,+}.$$
 Moreover, for general $s\in W^{\sigma,+}$, let $q:\tilde{X}_s \ra X$ be the associated spectral curve over $X$, then  $\tilde{X}_s$ is smooth and the involution $\sigma$ lifts to an involution $\tilde{\sigma}$ on $\tilde{X}_s $ such that, if $\al P^+$ is some translate of the Prym variety of  $\tilde{X}_s \ra \tilde{X}_s /\tilde{\sigma}$, then the pushforward  rational map $$q_*:\al P^+\ra \al U_X^{\sigma,+}(r)$$ is dominant. In particular  $\al U_X^{\sigma,+}(r)$ is irreducible.
 \item Suppose that $r$ is even, then there exists an affine subvariety $W^{\sigma,-}\subset W^{\sigma,+}$, such that $\Pi$ induces a dominant map $$\Pi:T^*\al U_X^{\sigma,-}(r)\longrightarrow \al U_X^{\sigma,-}(r)\times W^{\sigma,-}$$ 
such that for general $s\in W^{\sigma,-}$, the associated spectral curve is singular, and if $\hat{X}_s$ is its normalisation, then we have a dominant map $$\hat{q}_*:\hat{\al P}\ra \al U_X^{\sigma,-}(r),$$
where  $\hat{\al P}$ is some translate of the Prym variety of $\hat X\ra\hat X/\hat{\sigma}$, where $\hat\sigma$ is the lifting of the involution $\tilde{\sigma}:\tilde{X}_s \ra \tilde{X}_s $ to $\hat{X}_s$. In particular, as $\hat{\sigma}$ has no fixed point, we deduce that $\al U_X^{\sigma,-}(r)$ has two connected components. 
 \end{enumerate}
\end{theo}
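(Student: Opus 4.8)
The plan is to run the spectral (Beauville--Narasimhan--Ramanan) correspondence equivariantly for the involution. First I recall that for $s=(s_1,\dots,s_r)\in W$ the spectral curve $q\colon\tilde X_s\to X$ is cut out in the total space $\mathrm{Tot}(K_X)$ by the characteristic equation $\lambda^r+q^*s_1\,\lambda^{r-1}+\cdots+q^*s_r=0$, and that a Higgs field $\phi$ on $E$ with these characteristic coefficients is the same datum as a rank-one torsion-free sheaf $L$ on $\tilde X_s$ with $q_*L\cong E$, the field $\phi$ being multiplication by the tautological section $\lambda$. For generic $s$ the curve $\tilde X_s$ is smooth and integral, so $L\in\mathrm{Pic}(\tilde X_s)$; this is the input behind the dominance of $\Pi$ quoted above.

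Then I would transport the anti-invariance to the spectral side. Dualising sends $\phi$ to the transpose Higgs field ${}^t\phi\in H^0(\mathrm{End}(E^*)\otimes K_X)$, which has the opposite eigenvalues, while $\sigma^*$ pulls the eigenvalue section back along $\sigma$. Consequently the operation forced by $\sigma^*E\cong E^*$ acts on the base by $s_i\mapsto(-1)^i\sigma^*s_i$, and I would define $W^{\sigma,+}\subset W$ to be the linear fixed subspace $\{\sigma^*s_i=(-1)^is_i\}$. For $s\in W^{\sigma,+}$ the curve $\tilde X_s$ is preserved by the involution of $\mathrm{Tot}(K_X)$ given by $(x,\lambda)\mapsto(\sigma(x),-\lambda)$, yielding the lift $\tilde\sigma$ with $q\circ\tilde\sigma=\sigma\circ q$. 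Relative duality for the finite flat map $q$ gives $(q_*L)^*\cong q_*\!\big(L^{-1}\otimes K_{\tilde X_s/X}\big)$, and combined with $\sigma^*q_*L\cong q_*\tilde\sigma^*L$ the anti-invariance $\sigma^*E\cong E^*$ becomes $\tilde\sigma^*L\cong L^{-1}\otimes K_{\tilde X_s/X}$. The locus of such $L$ is a torsor under $\ker(1+\tilde\sigma^*)\subset\mathrm{Pic}^0(\tilde X_s)$, i.e.\ a translate $\al P^+$ of the Prym variety of $\tilde X_s\to\tilde X_s/\tilde\sigma$; the refinement $\sigma^*\psi={}^t\psi$ pins down the correct translate. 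Since for ramified $\pi$ the lift $\tilde\sigma$ has fixed points, this Prym is connected, so $\al P^+$ is irreducible and $q_*(\al P^+)$ is a dense irreducible family of $\sigma$-symmetric bundles, giving both the dominance and the irreducibility in (1).

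For (2), with $r$ even and the alternating sign $\sigma^*\psi=-{}^t\psi$, I would track how the extra sign changes the parity constraints and forces the generic spectral curve to acquire nodes at the fixed locus of the fibrewise involution $\lambda\mapsto-\lambda$; the defining vanishing conditions cut out $W^{\sigma,-}\subset W^{\sigma,+}$ as an affine, no longer linear, subvariety. Normalising resolves exactly these nodes to give $\hat X_s$ carrying a lift $\hat\sigma$ of $\tilde\sigma$ which is now fixed-point-free, and the same duality computation places the relevant line bundles in a translate $\hat{\al P}$ of the Prym of the \'etale double cover $\hat X_s\to\hat X_s/\hat\sigma$. The decisive structural input is the classical dichotomy: the Prym of a ramified double cover is connected, whereas the kernel of the norm map for an \'etale double cover has exactly two connected components. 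Thus $\hat{\al P}$ has two connected components and $\hat q_*$ maps it dominantly onto $\al U_X^{\sigma,-}(r)$; checking that these two components are distinguished by a discrete invariant (hence are not interchanged) then yields that $\al U_X^{\sigma,-}(r)$ has exactly two connected components.

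The main obstacles I anticipate are twofold. First, upgrading the \emph{constructions} above to genuine \emph{dominance} requires a dimension count matching $\dim\al P^{\pm}$ against $\dim\al U_X^{\sigma,\pm}(r)$, which I would carry out using Riemann--Hurwitz for $q$ and for $\tilde\sigma$ together with a deformation/tangent-space argument showing that the generic fibre of $\Pi$ over $W^{\sigma,\pm}$ has the expected dimension. Second, and most delicate, is proving that for generic $s\in W^{\sigma,-}$ the spectral curve is singular with precisely the nodes forced by the alternating condition and that normalisation renders $\hat\sigma$ fixed-point-free: all the sign bookkeeping relating $\psi$, the transpose Higgs field, and the fibrewise involution $\lambda\mapsto-\lambda$ is concentrated here, and it is exactly this analysis that decides the symmetric-versus-alternating alternative and hence the final component count.
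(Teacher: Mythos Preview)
Your overall strategy matches the paper's: run the BNR correspondence equivariantly, identify the spectral side with a translate of a Prym, and read off the component count from whether the spectral involution has fixed points. However, there is one genuine gap and several points where your sketch is either wrong or too vague to carry the argument.

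\textbf{The main gap: dominance.} A dimension count matching $\dim\al P^{\pm}$ with $\dim\al U_X^{\sigma,\pm}(r)$ does not by itself give dominance of $q_*$ or of $\Pi$; it only says the map \emph{could} be dominant. The paper's key input is Theorem~\ref{denseverystable}: the restriction $\Lambda_{X,r}^{\sigma,\pm}$ of the nilpotent cone to $T^*\al U_X^{\sigma,\pm}(r)$ is Lagrangian (being the intersection of a Lagrangian with a symplectic subspace, hence isotropic, and containing the zero section, hence of the right dimension). This forces the locus of \emph{very stable} anti-invariant bundles---those $E$ for which $\sr H_E^{-1}(0)=\{0\}$, equivalently $\sr H_E$ is dominant---to be open and dense. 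Only then does $\Pi$ become dominant, and only then can you conclude that for general $s$ the fibre $\sr H^{-1}(s)$ dominates $\al U_X^{\sigma,\pm}(r)$. Your ``deformation/tangent-space argument showing that the generic fibre has expected dimension'' does not supply this; without a density-of-very-stables statement you have no surjectivity.

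\textbf{Sign errors in the involution.} The transpose Higgs field ${}^t\phi$ has the \emph{same} eigenvalues as $\phi$, not the opposite ones. Consequently the paper's lift $\tilde\sigma$ on $\mathrm{Tot}(K_X)$ (via the positive linearisation on $K_X$) acts \emph{trivially} on the fibres over ramification points, so $\mathrm{Fix}(\tilde\sigma)=q^{-1}(R)$ has $2nr$ points; this is what makes $\tilde\pi:\tilde X_s\to\tilde Y_s$ ramified and $\al P^+$ connected. Your $(x,\lambda)\mapsto(\sigma(x),-\lambda)$ is the involution used for the \emph{invariant} locus (Section~5), not the anti-invariant one.

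\textbf{Part (2), concretely.} ``Parity constraints'' is not enough: the precise mechanism is that over each $p\in R$ one has ${}^t\phi_p=\psi_p\phi_p\psi_p^{-1}$ with $\psi_p$ symplectic, whence $J_r(\phi_p-\lambda)$ is skew and $\det(\phi_p-\lambda)=\mathrm{pf}(J_r(\phi_p-\lambda))^2$ is a perfect square (Lemma~\ref{characteristicpolinomialisquare}). This is what defines $W^{\sigma,-}$ and forces nodes exactly along $\tilde R$. You also need the local check that $\tilde\sigma$ \emph{swaps} the two branches at each node, so that $\hat\sigma$ is fixed-point-free.

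\textbf{Two components, not fewer.} That $\hat{\al P}$ has two components does not yet show $\al U_X^{\sigma,-}(r)$ does: you must exclude that both components map into one. The paper produces a $\bb Z/2$-valued deformation invariant $E\mapsto h^0(\pi_*(E\otimes\alpha)\otimes\kappa')\bmod 2$ for a suitable square root $\alpha$ of $\al O(R)$ and theta characteristic $\kappa'$ on $Y$, using Mumford's theorem, and checks it separates the two images. Your ``discrete invariant'' needs to be made this explicit.
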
 
The two connected components of $\al U_X^{\sigma,-}(r)$ are distinguished by a cohomological criterion (similar to that given by  the Stiefel-Whitney class, see subsection \ref{ramifiedalterne} for more details).

The \'etale case is little special. Assume that $\pi:X\ra Y$ is \'etale, denote by $\Delta=\text{det}(\pi_*\al O_X)^{-1}$ the $2-$torsion line bundle associated to $\pi$, then we have  the following results
\begin{theo}\label{main1.2}
 Choosing a line bundle on $X$ of norm $\Delta$ induces, by tensor product, an isomorphism $$\al U_X^{\sigma,+}(r)\stackrel{\sim}{\lra}\al U_X^{\sigma,-}(r),$$ and we have $W^{\sigma,+}=W^{\sigma,-}$. Furthermore  $\Pi$ induces dominant maps $$T^*\al U_X^{\sigma,+}(r)\lra \al U_X^{\sigma,+}(r)\times W^{\sigma,+},$$ 
$$T^*\al U_X^{\sigma,-}(r)\lra \al U_X^{\sigma,-}(r)\times W^{\sigma,-},$$ 
And for general $s\in W^{\sigma,+}$, the pushforword map $q_*$ induces dominant rational maps $$\al P^+\dashrightarrow \al U_X^{\sigma,+}(r),\;\;\al P^-\dashrightarrow \al U_X^{\sigma,-}(r),$$
where $\al P^+,\;\al P^-$ are different translates of the Prym variety of $\tilde{X}_s\ra \tilde{X}_s/\tilde{\sigma}$. \\
In particular both $\al U_X^{\sigma,+}(r)$ and  $\al U_X^{\sigma,-}(r)$ have two connected components.\\
\end{theo}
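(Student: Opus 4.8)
The plan is to deduce the étale case from the ramified analysis of Theorem \ref{main1.1} by exploiting the tensorisation isomorphism, and then to read off the number of connected components from the two-component structure of the Prym of an étale double cover. First I would construct the isomorphism $\al U_X^{\sigma,+}(r)\to\al U_X^{\sigma,-}(r)$. Fix a line bundle $L$ on $X$ with $\text{Nm}(L)=\Delta$. Since $\pi$ is étale, $\pi^*\Delta\cong\al O_X$, and the relation $\pi^*\text{Nm}(L)\cong L\otimes\sigma^*L$ gives $\sigma^*L\cong L^*$, so $L$ is an anti-invariant line bundle. The key point is to pin down its symmetry type: I would show that the hypothesis $\text{Nm}(L)=\Delta$ (as opposed to $\text{Nm}(L)=\al O_Y$) forces the descent datum $\lambda:\sigma^*L\to L^*$ to satisfy $\sigma^*\lambda=-\,^t\lambda$, i.e. $L$ is $\sigma$-alternating, equivalently $L$ lies in the nontrivial translate of the Prym. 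Granting this, tensoring $\psi\mapsto\psi\otimes\lambda$ shows that $E\mapsto E\otimes L$ sends a $\sigma$-symmetric anti-invariant bundle to a $\sigma$-alternating one, and conversely using $L^{-1}$, which also has norm $\Delta$ because $\Delta^2=\al O_Y$. As tensoring by a line bundle preserves stability, rank and degree, this yields the asserted isomorphism.

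Next, for the equality $W^{\sigma,+}=W^{\sigma,-}$ I would observe that both bases are the images under $\sr H$ of anti-invariant Higgs fields, and that $E\mapsto E\otimes L$ intertwines the two Hitchin morphisms while fixing the characteristic-polynomial data, since tensoring by a line bundle with $\sigma^*L\cong L^*$ does not alter the induced involution on $W$; hence the two subspaces of $W$ cut out by the anti-invariance conditions coincide. Through this isomorphism the dominance of $\Pi$ for one locus is equivalent to its dominance for the other, so it suffices to treat, say, the $\sigma$-symmetric locus exactly as in part (1) of Theorem \ref{main1.1}: for general $s\in W^{\sigma,+}$ the spectral curve $\tilde X_s\to X$ is smooth, $\sigma$ lifts to $\tilde\sigma$, and $q_*$ maps a translate of the Prym of $\tilde X_s\to\tilde X_s/\tilde\sigma$ dominantly onto the locus.

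The decisive difference from the ramified case, and what produces two components rather than one, is that here $\tilde\sigma$ is fixed-point free: a fixed point of $\tilde\sigma$ would lie over a fixed point of $\sigma$ on $X$, and $\sigma$ is étale. Consequently $\tilde X_s\to\tilde X_s/\tilde\sigma$ is an étale double cover, so the Prym $\ker(\text{Nm}_{\tilde X_s/(\tilde X_s/\tilde\sigma)})$, and hence each of its translates $\al P^+,\al P^-$, has exactly two connected components. Since the cotangent bundle $T^*\al U_X^{\sigma,\pm}(r)$ has the same number of components as $\al U_X^{\sigma,\pm}(r)$, and $\sr H$ exhibits it generically as a family over the irreducible base $W^{\sigma,\pm}$ with generic fibre a translate of this two-component Prym, the count reduces to showing that the monodromy of the family does not interchange the two Prym components.

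The main obstacle I anticipate is precisely this component-separation, together with the dominance of $q_*$ onto the correct symmetry type. I would resolve the separation by producing a locally constant $\bb Z/2$-valued invariant on the family that distinguishes the two Prym components and is preserved by $q_*$, of Stiefel-Whitney type, analogous to the invariant of subsection \ref{ramifiedalterne}, so that the two components of $\al P^\pm$ map to two genuinely distinct components of $\al U_X^{\sigma,\pm}(r)$; combined with dominance and the equality of component numbers between base and total space, this forces each of $\al U_X^{\sigma,+}(r)$ and $\al U_X^{\sigma,-}(r)$ to have exactly two components. Verifying that $q_*$ of a line bundle in the prescribed Prym translate is genuinely anti-invariant of the prescribed $(\pm)$-type, i.e. that the equivariant spectral correspondence is type-faithful, is the remaining technical core, handled by tracking the descent datum $\tilde\sigma^*\al L\cong\al L^{-1}$ through the pushforward.
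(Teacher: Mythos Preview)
Your proposal is correct and follows essentially the same route as the paper: reduce to the $\sigma$-symmetric \'etale statement (Theorem~\ref{etalecase}) via tensoring by a line bundle $\xi$ of norm $\Delta$, observe that $\tilde\sigma$ is fixed-point free so the Prym translate has two components, and separate them by a parity invariant. The paper makes your anticipated invariant explicit: via Lemma~\ref{pullbackkappa} one chooses an even theta characteristic $\kappa$ on $\tilde X_s$ and a square root $M$ of $\al O(\tilde S)$ with $M^{-1}\otimes\kappa=q^*\pi^*\kappa'$ for some theta characteristic $\kappa'$ on $Y$, so that the parity of $h^0(\tilde X_s,L\otimes M^{-1}\otimes\kappa)=h^0(X,q_*L\otimes\pi^*\kappa')$ distinguishes the two Prym components and is locally constant on $\al U_X^{\sigma,+}(r)$ by Mumford's theorem.
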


\begin{rema} 
\begin{enumerate}
\item If $r$ is odd and $\pi:X\ra Y$ is ramified, then $\al U_X^{\sigma,-}(r)=\emptyset$. In all other cases, the spaces $\al U_X^{\sigma,+}(r)$ and $\al U_X^{\sigma,-}(r)$ are non-empty (see \ref{appA} for the construction of stable anti-invariant vector bundles). 
\item If $r=1$, then $\al U_X^{\sigma,+}(1)=\text{Prym}(X/Y)$, and if $X\ra Y$ is \'etale, then $\al U_X^{\sigma,-}(1)=\text{Nm}^{-1}(\Delta)$.
\end{enumerate}
\end{rema}

The case of stable $\sigma-$symmetric vector bundles with trivial determinant, denoted $\al{SU}_X^{\sigma,+}(r)$, can then be deduced 
\begin{coro}
For general $s\in W^{\sigma,+}$, the direct image rational map $$\al P^+\cap\al Q^+\dashrightarrow \al{SU}_X^{\sigma,+}(r) $$ is dominant, where $\al Q^+$ is some translate of the Prym variety of the spectral cover $\tilde{X}_s \ra X$. In particular we deduce that $\al{SU}_X^{\sigma,+}(r)$ is irreducible.
\end{coro}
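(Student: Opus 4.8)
The plan is to deduce the corollary from Theorem~\ref{main1.1}(1) by encoding the trivial-determinant condition on the spectral side. Write $\tilde Y_s:=\tilde X_s/\tilde\sigma$ and recall the Beauville--Narasimhan--Ramanan determinant formula: for a line bundle $L$ on the smooth spectral curve $\tilde X_s$, with spectral cover $q:\tilde X_s\ra X$ of degree $r$, one has
\[
\det(q_*L)\cong \text{Nm}_q(L)\otimes\det\bigl(q_*\al O_{\tilde X_s}\bigr),
\]
where $\text{Nm}_q:J_{\tilde X_s}\ra J_X$ is the norm map of $q$. Hence $\det(q_*L)\cong\al O_X$ if and only if $\text{Nm}_q(L)\cong\det(q_*\al O_{\tilde X_s})^{-1}$, a condition cutting out exactly a translate $\al Q^+$ of $\ker(\text{Nm}_q)=\text{Prym}(\tilde X_s\ra X)$. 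Since by Theorem~\ref{main1.1}(1) the points of $\al P^+$ parametrise line bundles $L$ with $q_*L$ stable and $\sigma$-symmetric anti-invariant, the points of $\al P^+\cap\al Q^+$ parametrise those $L$ for which $q_*L$ additionally has trivial determinant, i.e.\ lies in $\al{SU}_X^{\sigma,+}(r)$. Thus $q_*$ maps $\al P^+\cap\al Q^+$ rationally into $\al{SU}_X^{\sigma,+}(r)$.

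For dominance I would set up two compatible determinant fibrations. On the base, taking determinants gives a morphism $\det:\al U_X^{\sigma,+}(r)\ra\text{Prym}(X/Y)$ (the target being $\al U_X^{\sigma,+}(1)$, by the Remark) whose fibre over $\al O_X$ is $\al{SU}_X^{\sigma,+}(r)$; on the spectral side, $L\mapsto\det(q_*L)$ is, by the formula above, a translate of $\text{Nm}_q|_{\al P^+}$, whose fibre over $\al O_X$ is $\al P^+\cap\al Q^+$. These two maps are intertwined by the dominant, generically finite map $q_*:\al P^+\ra\al U_X^{\sigma,+}(r)$ of Theorem~\ref{main1.1}(1), so in particular $\dim\al P^+=\dim\al U_X^{\sigma,+}(r)$. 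A dimension count then forces dominance on fibres: since $\text{Nm}_q|_{\al P^+}$ surjects onto $\text{Prym}(X/Y)$,
\[
\dim\bigl(\al P^+\cap\al Q^+\bigr)=\dim\al P^+-\dim\text{Prym}(X/Y)=\dim\al U_X^{\sigma,+}(r)-\dim\text{Prym}(X/Y)=\dim\al{SU}_X^{\sigma,+}(r),
\]
whence the generically finite $q_*$ restricts to a dominant rational map $\al P^+\cap\al Q^+\dashrightarrow\al{SU}_X^{\sigma,+}(r)$.

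Finally, for irreducibility it suffices to show $\al P^+\cap\al Q^+$ is irreducible, since its image under the dominant $q_*$ is then irreducible and dense in $\al{SU}_X^{\sigma,+}(r)$. Up to translation, $\al P^+\cap\al Q^+$ is the kernel of the homomorphism $\text{Nm}_q:\text{Prym}(\tilde X_s\ra\tilde Y_s)\ra\text{Prym}(X/Y)$; its image indeed lies in $\text{Prym}(X/Y)$ because $\text{Nm}_\pi\circ\text{Nm}_q=\text{Nm}_{\tilde Y_s/Y}\circ\text{Nm}_{\tilde X_s/\tilde Y_s}$ vanishes on $\ker(\text{Nm}_{\tilde X_s/\tilde Y_s})$. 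The main obstacle is to prove that this homomorphism is \emph{surjective with connected kernel}: surjectivity secures the dimension count above, while connectedness is what yields irreducibility rather than a splitting into several components (contrast $\al U_X^{\sigma,-}(r)$ in Theorem~\ref{main1.1}(2), where the analogous kernel is disconnected). I would establish connectedness by a monodromy argument over the open locus of $W^{\sigma,+}$ parametrising smooth spectral curves: the monodromy action on $H_1(\tilde X_s,\bb Z)$ is as large as the $\tilde\sigma$-equivariant geometry allows, and the ramification of $\tilde X_s\ra\tilde Y_s$ (the fixed points of $\tilde\sigma$) makes the relevant double-cover Prym connected, forcing the component group of the kernel to be trivial.
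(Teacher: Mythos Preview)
Your identification of $\al Q^+$ via the BNR determinant formula and the setup of compatible determinant fibrations are correct and match the paper's framework. For dominance, the paper uses a more direct trick: given a general $E\in\al U_X^{\sigma,+}(r)$ with $q_*L\cong E$, one picks an $r$-th root $\lambda\in P_0$ of $\det(E)^{-1}$ and twists, so $q_*(L\otimes q^*\lambda)\in\al{SU}_X^{\sigma,+}(r)$; this avoids having to verify the surjectivity of $\text{Nm}_q|_{\al P^+}$ onto $\text{Prym}(X/Y)$ that your dimension count silently relies on. Your route is fine in spirit but you should justify that surjectivity (or at least that the fibre over $\al O_X$ meets the stable locus), which is not automatic.

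The genuine gap is the connectedness of $\al P^+\cap\al Q^+$. Your ``monodromy argument'' is only a gesture: you have not specified which monodromy group, acting on which lattice, nor why the ramification of $\tilde X_s\ra\tilde Y_s$ forces triviality of the component group of the kernel of $\text{Nm}_q$ restricted to $\text{Prym}(\tilde X_s/\tilde Y_s)$. Connectedness of $\text{Prym}(\tilde X_s/\tilde Y_s)$ itself follows from ramification, but that is a different statement from connectedness of the intersection. The paper devotes an entire proposition to this: it shows that the restriction $\vartheta:\al Q\ra\underline{\al Q}$ of $\widetilde{\text{Nm}}$ (where $\underline{\al Q}=\text{Prym}(\tilde Y_s/Y)$) has connected kernel by computing the kernel of the dual $\hat\vartheta$. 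Concretely, it compares the polarisations $\varphi_{\al Q}$ and $\varphi_{\underline{\al Q}}$ induced from the Jacobians, uses that their types are $(1,\dots,1,r,\dots,r)$ with $g_X$ and $g_Y$ copies of $r$ respectively, and checks explicitly (via Kempf's Lemma and $r$-torsion descent) that $\text{Ker}(\varphi_{\al Q}\circ\mu)=\tilde q^*J_Y[r]$ has cardinality $r^{2g_Y}=\text{card}(\text{Ker}(\varphi_{\underline{\al Q}}))$, forcing $\hat\vartheta$ to be injective and hence $\text{Ker}(\vartheta)=\al P\cap\al Q$ to be connected. The \'etale case requires a separate case analysis on the parity of $r$. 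This is a substantive abelian-variety computation, not a consequence of a generic monodromy principle, and without it your irreducibility conclusion is unsupported.
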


We also prove similar results for the locus  $\al U_X^{\sigma,\tau}(r,d)$ of  $\sigma-$invariant vector bundles of a fixed type $\tau$.  
More precisely, for any type $\tau$, we characterise a linear subspace $W^{\sigma,\tau}\subset W$ such that the Hitchin morphism $\sr H$ induces a map $$T^*\al U_X^{\sigma,\tau}(r,0)\lra  W^{\sigma,\tau}$$ and $\text{dim}(W^{\sigma,\tau})=\text{dim}(\al U_X^{\sigma,\tau}(r,0))$. We prove again that this system gives a dominant rational map from a translate of the Jacobian of the normalisation $\hat{X}_s$ of the spectral curve attached to general $s\in W^{\sigma,\tau}$ $$\text{Pic}^{\bullet}(\hat{X}_s)^{\tilde{\sigma},\hat{\tau}}\dashrightarrow \al U_X^{\sigma,\tau}(r,0),$$
for some type $\hat{\tau}$ of $\hat{\sigma}-$invariant line bundles over $\hat{X}_s$. \\

The locus of $\sigma-$invariants vector bundles of fixed type $\tau$ corresponds to parabolic vector bundles of parabolic structure attached to $\tau$ over the quotient curve $Y$. The Hitchin systems over the moduli space of parabolic vector bundles have been studied by Logares and Martens \cite{LM} in more general situation. \\
More recently, Baraglia, Kamgarpour and Varma in \cite{BKV}, have studied the complete integrability of the Hitchin systems over the moduli of parahoric $\al G-$bundles, for a non-twisted parahoric group scheme $\al G$. This can be thought of as a generalisation of the parabolic bundles case. However the anti-invariant vector bundle case corresponds to parahoric $\al G-$bundles over $Y$ for a \emph{twisted} parahoric group scheme $\al G$.

Some other authors have also considered some closely related problems. We mention the thesis of Schaposnik (\cite{Sch}), where she studied the Hitchin system for invariant Higgs bundles under involutions induced by real forms of groups. Also Andersen and Grove have studied in \cite{AJ} invariant rank two vector bundles under the action of an automorphism of the base curve.\\

\textbf{Plan of the paper.} We start in section $2$ by recalling the main results of the theory of spectral curves and the Hitchin systems. In section $3$ we will give the basic facts about the invariant and anti-invariant vector bundles and study their loci. This is studied in more detail in my thesis. In section $4$ we study the Hitchin system for the anti-invariant  case and give the proof of Theorems \ref{main1.1} and \ref{main1.2}. In the last section we concentrate on the Hitchin systems for the invariant locus.\\

\textbf{Acknowledgement.} I am very grateful to my advisor Prof. Christian PAULY, who has led me during the preparation of this work, which, for sure, would not have been possible without his advice and encouragement.\\

\section{Preliminaries}
In this section we recall  the general theory of spectral curves. Our main reference is \cite{BNR}. The ground field is always assumed to be   $\bb C$. 

Let $L$ be any line bundle  over a smooth projective curve $X$. Consider the ruled surface over $X$ $$\bar q: \bb S=\bb P(\al O_X\oplus L^{-1})\ra X,$$ where for a vector bundle $\sr E$ we denote $\text{Sym}^\bullet(\sr E)$ the symmetric algebra and $$\bb P(\sr E)=\text{Proj}(\text{Sym}^\bullet(\sr E)).$$ Hence a point in  $\bb S$ lying over $x\in X$ corresponds to a hyperplane in the fiber  $(\al O_X\oplus L^{-1})_x$. It follows that the total space of $L$ denoted $|L|$ is contained in $\bb S$.

Let $\al O(1)$ be the relatively ample line bundle over $\bb S$. It is well known that $\bar q_*\al O(1)\cong \al O_X\oplus L^{-1}$.  Hence $\al O(1)$ has a canonical section, denoted by $y$, corresponding to the direct summand $\al O_X$. Also by the projection formula $\bar q_*(\bar q^*L\otimes\al O(1))$ is isomorphic to $L\oplus \al O_X$, so it has a canonical section which we denote by $x$. 

Let $$s=(s_1,\cdots,s_r)\in\bigoplus_{i=1}^r H^0(X,L^i)=:W_L$$ be an $r-$tuple  of global sections of $L^i$ and consider the global section
\begin{equation} \label{sectiontildeX} x^r+(\bar q^*s_1)yx^{r-1}+\cdots+ (\bar q^*s_r)y^r \in H^0(\bb S,\bar{q}^*K_X^r\otimes \al O(r)).\end{equation} 
We denote by $\tilde{X}_s$ its zero scheme which is a curve. We say that $\tilde{X}_s $ is the \emph{spectral curve} associated to $s\in W_L$. Denote $q:\tilde{X}_s \ra X$ the restriction of $\bar q$ to $\tilde{X}_s $. It is  clear that $\tilde{X}_s $ is finite cover of degree $r$ of $X$ and its fiber over $p\in X$ is given by the homogeneous equation in $\bb P^1$  $$x^r+s_1(p)x^{r-1}y\cdots+s_r(p)y^r=0.$$
\begin{lemm} \label{smoothlemma} The set of elements $s\in W_L$ corresponding to smooth spectral curves $\tilde{X}_s$ is open.  In particular it is dense whenever it is not empty. 
\end{lemm}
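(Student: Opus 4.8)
The plan is to realise all the spectral curves $\tilde X_s$ as the fibres of one family over $W_L$, and then to show that the locus of \emph{singular} fibres is closed by combining the Jacobian criterion on the smooth surface $\bb S$ with the properness of $\bb S$.

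First I would build the universal spectral curve. On the product $W_L\times\bb S$, with projections $\mathrm{pr}_W$ and $\mathrm{pr}_{\bb S}$, the expression in \eqref{sectiontildeX} depends linearly on $s=(s_1,\dots,s_r)$; letting $s$ vary it therefore defines a single global section $F\in H^0\!\big(W_L\times\bb S,\ \mathrm{pr}_{\bb S}^*M\big)$, where $M=\bar q^*K_X^r\otimes\al O(r)$, whose restriction to $\{s\}\times\bb S$ is exactly the section cutting out $\tilde X_s$. Its zero scheme $\al Z=V(F)\subset W_L\times\bb S$ is the universal spectral curve, and $\mathrm{pr}_W$ restricts to a morphism $\rho\colon\al Z\ra W_L$ with $\rho^{-1}(s)=\tilde X_s$.

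Next I would describe the singular points. Since $\bb S$ is a smooth surface and each $\tilde X_s$ is an effective divisor on it, the Jacobian criterion says that $\tilde X_s$ is singular at a point $p$ exactly when $F$ and its differential along the $\bb S$-directions both vanish at $(s,p)$. On a trivialising open set for $M$, with local coordinates on $\bb S$, the section $F$ becomes an honest function and I impose the closed condition that $F$ and its two partial derivatives vanish; at points of $\al Z$ the differential $d_{\bb S}F$ is a well-defined section of $\mathrm{pr}_{\bb S}^*(\Omega^1_{\bb S}\otimes M)$, independent of the chosen trivialisation, so these local conditions glue to a closed subscheme $\Sigma\subset W_L\times\bb S$ which set-theoretically equals $\{(s,p): p\in\mathrm{Sing}(\tilde X_s)\}$.

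Finally, by construction $\mathrm{pr}_W(\Sigma)=\{s\in W_L:\tilde X_s\text{ is singular}\}$. The projection $W_L\times\bb S\ra W_L$ is proper, because $\bb S$ is proper over $\bb C$ (it is a $\bb P^1$-bundle over the projective curve $X$), hence it is a closed map; therefore $\mathrm{pr}_W(\Sigma)$ is closed in $W_L$, and its complement, the set of $s$ with $\tilde X_s$ smooth, is open. Since $W_L$ is a finite-dimensional vector space, it is irreducible as a variety, so any non-empty open subset is dense, which gives the last assertion. The only genuinely delicate point is the construction of $\Sigma$: one must check that ``the differential of $F$ along $\bb S$ vanishes'' is intrinsic at the zeros of $F$ (so it is insensitive to the local trivialisation of $M$) and is cut out by equations polynomial in $s$; the remaining ingredients—properness of $\bb S$ and irreducibility of $W_L$—are standard.
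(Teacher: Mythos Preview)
Your proof is correct and follows essentially the same strategy as the paper: both apply the Jacobian criterion to see that the singularity condition is closed, then conclude openness of the smooth locus. The paper works locally, writing down the equation $P(x,t)=x^r+s_1(t)x^{r-1}+\cdots+s_r(t)$ and observing that the vanishing of $\partial P/\partial x$ and $\partial P/\partial t$ is a closed condition on $s$; you package this globally via the universal spectral curve and make explicit the properness argument needed to project out the point of singularity, a step the paper leaves implicit.
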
 
\begin{proof} Assume that $\tilde{X}_s $ is integral (i.e. reduced and irreducible, which is true for general $s\in W$, see \cite{BNR}, Remark 3.1)  and let $$P(x,t)=x^r+s_1(t)x^{r-1}+\cdots+s_r(t)=0$$ be the equation of $\tilde{X}_s $ locally over a point $p\in X$, where $t$ is a local parameter near $p$. Then, by the Jacobian criterion of smoothness,   $\tilde{X}_s $ is singular at a point $\lambda\in \tilde{X}_s $ over $p$ if and only if $$\dfrac{\partial P}{\partial x}(\lambda,0)=\dfrac{\partial P}{\partial t}(\lambda,0)=0,$$
i.e. $$ r\lambda^{r-1}+(r-1)s_1(0)\lambda^{r-2}+\cdots+s_{r-1}(0)=0, $$
     $$ s_1'(0)\lambda^{r-1}+s_2'(0)\lambda^{r-2}+\cdots+s_{r}'(0)=0. $$
Clearly these two equations give a closed condition on $s=(s_1,\cdots,s_r)\in W_L$. Hence the set of $s\in W_L$ corresponding to  smooth curves $\tilde{X}_s $ is open.
\end{proof} 
\begin{rema} 
We remark that the criterion of smoothness given in \cite{BNR}, Remark 3.5, is not correct. In fact this criterion assumes that the singular point is located at $\lambda=0$.
\end{rema}
\begin{rema}
An alternative way to construction $\tilde{X}_s $ is as follows: consider the symmetric $\al O_X-$algebra $\text{Sym}^\bullet(L^{-1})$. Define the ideal    
$$\ak I=\gen{\bigoplus_i s_i(L^{-r})}\subset \text{Sym}^\bullet(L^{-1}),$$ where $s_i\in H^0(X,L^i)$ is seen here as an embedding $s_i:L^{-r}\ra L^{-r+i}$. Then $\tilde{X}_s $ can be defined as $\text{Spec}\left(\text{Sym}^\bullet(L^{-1})/\ak I\right)$.                                                                                                                                                                                                                                                                                                                                                                                                                                                                                                                                                                                                                                         
\end{rema}
Suppose that $\tilde{X}_s $ is smooth and let $\tilde{S}=Ram(\tilde{X}_s /X)\subset\tilde{X}_s $ be the ramification divisor of $q:\tilde{X}_s \ra X$.\\
Recall that $$q_*\al O_{\tilde{X}_s }\cong\al O_X\oplus L^{-1}\oplus\cdots\oplus L^{-(r-1)},$$
hence, by duality of finite flat morphisms (see  \cite{HA}, Ex III.6.10) $$q_*\left(\al O_{\tilde{X}_s }(\tilde{S})\right)\cong\left(q_*\al O_{\tilde{X}_s }\right)^*\cong \al O_X\oplus L\oplus\cdots\oplus L^{r-1}.$$  In particular, using the fact that for any line bundle $M$ over $\tilde{X}_s$ $$\text{det}(q_*M)=\text{det}(q_*\al O_{\tilde{X}_s })\otimes \text{Nm}_{\tilde{X}_s/X}(M),$$
where $\text{Nm}_{\tilde{X}_s/X}:\text{Pic}(\tilde{X}_s )\ra \text{Pic}(X)$ is the norm map, we deduce $$\text{deg}(\tilde{S})=r(r-1)\text{deg}(L).$$
Furthermore, by Hurwitz formula, we have $K_{\tilde{X}_s }=q^*K_X(\tilde{S})$. Thus, by the projection formula we get $$q_*K_{\tilde{X}_s }\cong K_X\oplus K_XL\oplus\cdots\oplus K_XL^{r-1}.$$
It follows that the genus $g_{\tilde{X}_s}$ of $\tilde{X}_s $ is $$g_{\tilde{X}_s }=\text{deg}(L)\dfrac{r(r-1)}{2}+r(g_X-1)+1.$$\\
Recall that for a stable vector bundle $E$, the Hitchin map  $$\sr H_E:H^0(X,E\otimes E^*\otimes L)\ra W_L$$ is defined by $$s\lra \sr H_E(s)=\left((-1)^i \text{Tr}(\bigwedge^i s)\right)_i,$$ where $\text{Tr}$ is the trace map.\\
We recall a very important result from \cite{BNR}
\begin{prop} \label{bnrprop}
Let $\tilde{X}_s $ be an \emph{integral} (resp. smooth) spectral curve over $X$ associated to $s\in W_L$. Then there is a one-to-one correspondence between torsion-free $\al O_{\tilde{X}_s}-$modules of rank $1$ (resp. $\text{Pic}(\tilde{X}_s )$) and the isomorphism classes of pairs $(E,\phi)$ where $E$ is a rank $r$ vector bundle and $\phi:E\ra E\otimes L$ is a morphism such that $\sr H_E(\phi)=s$
\end{prop}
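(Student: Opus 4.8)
The plan is to read off both sides of the bijection from the affine morphism $q:\tilde{X}_s\ra X$, using the description of the last remark: set $\al A:=\text{Sym}^\bullet(L^{-1})/\ak I$, a coherent sheaf of $\al O_X$-algebras with $\tilde{X}_s=\text{Spec}(\al A)$. Since $q$ is affine, quasi-coherent $\al O_{\tilde{X}_s}$-modules are the same thing as $\al A$-modules, and under this identification $q_*$ is simply ``regard an $\al A$-module as an $\al O_X$-module''. So the whole statement reduces to matching, on the one hand, $\al A$-modules whose underlying $\al O_X$-module is locally free of rank $r$, and on the other hand, Higgs pairs $(E,\phi)$ with $\sr H_E(\phi)=s$; the $\text{Pic}$ versus torsion-free distinction will then be exactly the smooth versus integral distinction.

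First I would go from a sheaf to a pair. Given a torsion-free rank-one $\al O_{\tilde{X}_s}$-module $M$, put $E:=q_*M$. As $q$ is finite flat of degree $r$ and $X$ is a smooth curve, $E$ is torsion-free, hence locally free, and a computation at the generic point $\eta$ of $X$ gives its rank: integrality of $\tilde{X}_s$ means $\al A_\eta=K(\tilde{X}_s)$ is a field of degree $r$ over $K(X)$, so $M_\eta$ is one-dimensional over $K(\tilde{X}_s)$ and thus $r$-dimensional over $K(X)$. Multiplication on $M$ by the tautological section $x$ (restricted from $\bb S$ to $\tilde{X}_s$) is an $\al O_{\tilde{X}_s}$-linear map $M\ra M\otimes q^*L$; applying $q_*$ and the projection formula $q_*(M\otimes q^*L)\cong E\otimes L$ yields $\phi:E\ra E\otimes L$. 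Because on $\tilde{X}_s$ the section $x$ satisfies the dehomogenisation of the defining equation \eqref{sectiontildeX}, namely $x^r+(q^*s_1)x^{r-1}+\cdots+q^*s_r=0$, multiplication by $x$ — and therefore $\phi$ — satisfies this monic degree-$r$ relation; evaluating at $\eta$, where $\phi_\eta$ is the regular representation of $K(\tilde{X}_s)$ and so has this polynomial as its characteristic polynomial, forces $\sr H_E(\phi)=s$.

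Conversely, from a pair to a sheaf, I would view $\phi$ as a map $L^{-1}\otimes E\ra E$ and let the degree-one generator $L^{-1}$ of $\text{Sym}^\bullet(L^{-1})$ act on $E$ by $\phi$; this makes $E$ a $\text{Sym}^\bullet(L^{-1})$-module. The Cayley--Hamilton theorem says $\phi$ annihilates its characteristic polynomial, whose coefficients are precisely the entries of $\sr H_E(\phi)=s$, so the generators of $\ak I$ act as zero and the action descends to an $\al A$-module structure on $E$. This is a coherent sheaf $M$ on $\tilde{X}_s$ with $q_*M\cong E$, and by construction the tautological multiplication recovers $\phi$, so the two assignments are mutually inverse. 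That the resulting $M$ is torsion-free of generic rank one is where integrality is genuinely used: the rank-one claim is the same generic-point computation as above, while torsion-freeness follows because any torsion subsheaf $T\subset M$ has $q_*T$ a torsion subsheaf of the locally free $E$, hence zero, and $q_*$ is faithful.

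The main obstacle is exactly this last verification — controlling the rank-one and torsion-free conditions in the possibly singular integral case, since there torsion-free rank-one sheaves are strictly more than line bundles. The formal equivalence ($q$ affine) and the Cayley--Hamilton step are routine; the content is that integrality of $\tilde{X}_s$ is what pins the generic rank to one and, when $\tilde{X}_s$ is moreover smooth, upgrades ``torsion-free of rank one'' to an honest element of $\text{Pic}(\tilde{X}_s)$, giving the parenthetical form of the statement.
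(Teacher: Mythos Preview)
Your argument is correct and is essentially the standard proof from \cite{BNR}; note that the paper does not give its own proof of this proposition but simply recalls it as a result from \cite{BNR}, so there is nothing further to compare.
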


Maybe the most important case of spectral curves is when $L=K_X$. We denote simply by $W$ the space $W_{K_X}$.  In this case, the genus $g_{\tilde{X}_s}$ of $\tilde{X}_s $ is $g_{\tilde{X}_s}=r^2(g_X-1)+1$, which coincides with the dimension of the moduli space $\al U_X(r,0)$ of stable vector bundles of rank $r$ and degree $0$ over $X$. In \cite{BNR} it is proved that the map  $$\Pi:T^*\al U_X(r,0)\ra \al U_X(r,0)\times W$$ is dominant. Moreover, the fiber $\sr H^{-1}(s)$ of a general point $s\in W$ is isomorphic to an open subset of $\text{Pic}^{m}(\tilde{X}_s )$, where $m=r(r-1)(g_X-1)$. We claim that this is still true for the classical algebraic groups $\text{Sp}_{2m}$ et $\text{SO}_r$. Consider the moduli spaces $\al M_X(\text{Sp}_{2m})$ and $\al M_X(\text{SO}_{r})$ of $\text{Sp}_{2m}-$bundles and $\text{SO}_r-$bundles respectively which are \emph{stable as vector bundles}. Define  $$W_{Sp_{2m}}=\bigoplus_{i=1}^mH^0(X,K_X^{2i}),$$ 
and $$W_{SO_r}=\begin{cases} \bigoplus_{i=1}^{r/2-1}H^0(X,K_X^{2i})\oplus H^0(X,K_X^{r/2}) & r\equiv 0\mod2 \\ \bigoplus_{i=1}^{(r-1)/2}H^0(X,K_X^{2i})& r\equiv 1\mod 2 \end{cases}.$$ 
For general $s\in W_{Sp_{2m}}$ the curve $\tilde{X}_s$ is smooth, and for general $s\in W_{SO_r}$ the associated $\tilde{X}_s$ is nodal curve. In this case we denote $\hat{X}_s$ its normalisation. In both cases, the involution of the ruled surface $\bb S$ that sends $x$ to $-x$  induces an involution on $\tilde{X}_s$, we denote it by $\iota$. Remark that in the singular case, $\iota$ lifts to an involution on $\hat{X}_s$ without fixed points.\\
Recall that Hitchin (\cite{NH}) has proved that the map $\Pi$ induces maps  
\begin{gather} \label{pidominant}
\begin{aligned}
T^*\al M_X(\text{Sp}_{2m})\lra \al M_X(\text{Sp}_{2m})\times W_{Sp_{2m}}, \\
T^*\al M_X(\text{SO}_r)\lra \al M_X(\text{SO}_r)\times W_{SO_r}.
\end{aligned}
\end{gather}
Moreover, the generic fiber in the case of symplectic bundles is isomorphic to an open set of a translate of the Prym variety  of  $\tilde{X}_s \ra \tilde{X}_s /\iota$. In the case of orthogonal bundles, the generic fiber is an open dense of the Prym variety of $\hat{X}_s\ra \hat{X}_s/\iota$. We refer to \cite{NH} for more details.  
\begin{prop} 
The restrictions of $\Pi$ given in (\ref{pidominant}) are dominant. Moreover, for general $s\in W_{Sp_{2m}}$ (resp. $s\in W_{SO_r}$), if $\al P$ is a translation of the Prym variety of $\tilde{X}_s\ra \tilde{X}_s/\iota$ (resp. $\hat{X}_s\ra \hat{X}_s/\iota$), then the pushforward map $$\al P\dashrightarrow \al M_X(\text{Sp}_{2m})\;\;(\text{resp. } \al M_X(\text{SO}_r))$$ is dominant.  
\end{prop}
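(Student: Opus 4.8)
The plan is to reduce both assertions of the proposition to a single statement about the pushforward map, and then to prove that statement by a dimension count followed by a tangent-space computation. First I would invoke Proposition \ref{bnrprop} together with Hitchin's identification (recalled just above the statement) of the generic fibre of $\sr H$ over $W_{Sp_{2m}}$ (resp. $W_{SO_r}$) with a translate $\al P$ of the Prym variety of $\tilde X_s\ra \tilde X_s/\iota$ (resp. $\hat X_s\ra \hat X_s/\iota$). Under this identification the projection $\sr H^{-1}(s)\ra \al M_X(G)$ is exactly the pushforward map $\al P\dashrightarrow \al M_X(G)$, $M\mapsto q_*M$. The image of $\Pi=(\mathrm{proj},\sr H)$ meets the fibre $\al M_X(G)\times\{s\}$ of the second projection in the image of this pushforward map; since $W_{Sp_{2m}}$ (resp. $W_{SO_r}$) is irreducible and $\sr H$ is surjective onto it, $\Pi$ is dominant if and only if the pushforward $\al P\dashrightarrow \al M_X(G)$ is dominant for general $s$. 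Thus it suffices to prove the second assertion, and the dominance of the restrictions of $\Pi$ in (\ref{pidominant}) follows.

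Next I would carry out the dimension count. In the symplectic case $\tilde X_s$ is smooth of genus $g_{\tilde X_s}=r^2(g_X-1)+1$ with $r=2m$, and the fixed locus of $\iota\colon x\mapsto -x$ is the intersection of $\tilde X_s$ with the zero section, i.e. the zero scheme of the top coefficient $s_m\in H^0(X,K_X^{2m})$, consisting of $\deg K_X^{2m}=4m(g_X-1)$ points. Applying Riemann--Hurwitz to $\tilde X_s\ra \tilde X_s/\iota$ gives $g_{\tilde X_s/\iota}=m(2m-1)(g_X-1)+1$, whence $\dim\al P=g_{\tilde X_s}-g_{\tilde X_s/\iota}=m(2m+1)(g_X-1)=\dim\al M_X(\mathrm{Sp}_{2m})$. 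A parallel computation, carried out on the normalisation $\hat X_s$ on which $\iota$ lifts without fixed points, yields $\dim\al P=\tfrac{r(r-1)}{2}(g_X-1)=\dim\al M_X(\mathrm{SO}_r)$ in the orthogonal case. In both cases the source and the target of the pushforward map have the same dimension, so dominance is equivalent to generic finiteness, i.e. to the bijectivity of the differential of $\al P\dashrightarrow\al M_X(G)$ at a general point.

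For the differential I would exploit the equivariance of the $\mathrm{BNR}$ construction under $\iota$. For a general line bundle $M$ with $E=q_*M$, the differential of the $\mathrm{GL}_r$ pushforward $q_*\colon\mathrm{Pic}(\tilde X_s)\ra\al U_X(r,0)$ is the map $H^1(\tilde X_s,\al O_{\tilde X_s})\ra H^1(X,\mathrm{End}(E))$ shown in \cite{BNR} to be an isomorphism at a general point (this is their proof that the $\mathrm{GL}_r$ pushforward is dominant). This map is $\iota$-equivariant, where $\iota$ acts on the source through $\iota^\ast$ and on the target through the involution $\theta$ with eigenspace decomposition $\mathrm{End}(E)=\ak g(E)\oplus\ak g(E)^{\perp}$, $\ak g=\ak{sp}_{2m}$ (resp. $\ak{so}_r$). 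Restricting to the appropriate eigenspaces identifies $T_M\al P=H^1(\tilde X_s,\al O_{\tilde X_s})^{-}$ isomorphically with $T_E\al M_X(G)=H^1(X,\ak g(E))$, and the restricted map is precisely the differential of the pushforward $\al P\dashrightarrow\al M_X(G)$. Being a restriction of the $\mathrm{BNR}$ isomorphism to matching eigenspaces, it is again an isomorphism, giving generic finiteness and hence dominance.

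The main obstacle is the orthogonal case, where $\tilde X_s$ is only nodal, so one must replace $\tilde X_s$ by its normalisation $\hat X_s$ and, on $\tilde X_s$, work with rank-one torsion-free sheaves rather than line bundles. Two points require care. First, one must show that $\iota$ lifts to a \emph{fixed-point-free} involution on $\hat X_s$: the nodes lie over the zeros of the Pfaffian on the zero section, and normalisation separates the two branches exchanged by $\iota$. This is what produces the correct Prym and is responsible for the component phenomena appearing elsewhere in the paper. Second, one must transport the $\mathrm{BNR}$ isomorphism and its $\iota$-equivariance to the normalised curve, since the deformation theory of the compactified Jacobian and the behaviour of the pushforward along $\hat X_s\ra\tilde X_s\ra X$ are more delicate than in the smooth symplectic situation. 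Once these are settled the eigenspace argument goes through verbatim and completes the proof.
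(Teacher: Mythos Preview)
Your approach differs substantially from the paper's, and the difference is where the trouble lies. The paper argues in the \emph{opposite} order: it first proves that $\Pi$ is dominant by invoking Laumon's theorem that the nilpotent cone $\Lambda_G\subset T^*\al M_X(G)$ is Lagrangian. Since the zero section already has dimension $\dim\al M_X(G)$ and lies in $\Lambda_G$, the Lagrangian property forces the generic fibre of $\Lambda_G\to\al M_X(G)$ to be zero; these are the \emph{very stable} $G$-bundles $E$, for which $\sr H_E^{-1}(0)=\{0\}$, hence $\sr H_E$ is finite and therefore dominant. This makes $\Pi$ dominant. Only then does the paper conclude that for general $s$ the projection $\sr H^{-1}(s)\to\al M_X(G)$ is dominant and identify it, via Hitchin, with $q_*$ on $\al P$.

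You try to prove the pushforward dominant directly by a differential-plus-equivariance argument, and then pull back to the first assertion. The dimension counts and the $\iota$-equivariance of $dq_*$ are fine, but the crucial step has a gap: you assert that the $\text{GL}_r$ differential $dq_*\colon H^1(\tilde X_s,\al O)\to H^1(X,\text{End}(E))$ is an isomorphism ``at a general point'', citing \cite{BNR}. What BNR actually gives is that $\Pi$ is dominant for $\text{GL}_r$, which means $dq_*$ is an isomorphism at a general $M\in\text{Pic}^m(\tilde X_s)$ for a general $s\in W$. You, however, need it at a general point of the \emph{proper} subvariety $\al P\subset\text{Pic}^m(\tilde X_s)$, and for $s$ in the \emph{proper} subvariety $W_G\subset W$. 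Neither specialization is automatic: a priori $\al P$ could sit inside the closed locus where $dq_*$ drops rank. Concretely, writing $C$ for the cokernel of $q_*\al O_{\tilde X_s}\hookrightarrow\text{End}(E)$, you would need $H^0(X,C)=0$ for general $M\in\al P$, and you give no argument for this. The cleanest way to obtain it is to show that $q_*$ is dominant for \emph{every} smooth spectral curve, which follows once one knows very stable $\text{GL}_r$-bundles exist---and that is exactly the Laumon input the paper uses. So your eigenspace reduction ultimately rests on the same ingredient; it does not circumvent it. The orthogonal case, as you note yourself, has the additional unresolved issue of carrying the argument through the normalisation $\hat X_s\to\tilde X_s$.
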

\begin{proof}
Laumon has proved in \cite{LG} that the nilpotent cone $$\Lambda_G\subset T^*\al M_X(G)$$ is Lagrangian, for any reductive algebraic group $G$. In particular, for $G=\text{Sp}_{2m}$ (resp. $G=\text{SO}_r$), we deduce that the locus of $G-$bundles $E$ such that $$\sr H_E:H^0(X,\text{Ad}(E)\otimes K_X)\ra W_{Sp_{2m}}\;\;( \,\text{resp. }  W_{SO_r})$$ is dominant, forms an open dense  subset of $\al M_X(G)$. Indeed, we have $$\text{dim}(\Lambda_G)=\text{dim}(\al M_X(G)),$$ and the restriction of the canonical projection $T^*\al M_X(G)\ra \al M_X(G)$ to $\Lambda_G$ is surjective (because $(E,0)\in \Lambda_G$ for any $G-$bundle $E$). Hence by the dimension theorem, it follows that there exists an open dense subset of $\al M_X(G)$ over which $\Lambda_G$ is reduced to the zero section of $T^*\al M_X(G)$. This open subset is by definition the set of very stable bundles $E$, for which, the map $\sr H_E$ is dominant.\\
It follows that the restrictions of $\Pi$ given in (\ref{pidominant}) are dominant maps. Hence for general $s\in W_{Sp_{2m}}$ (resp. $s\in W_{SO_r}$), we get a dominant maps
$$\sr H^{-1}(s)\lra \al M_X(\text{Sp}_{2m})\;\;(\text{resp. } \al M_X(\text{SO}_r)).$$
Furthermore, if $S$ is the ramification of $\tilde{X}_s/\iota\ra X$ (resp. $\hat{X}_s/ \iota\ra X$),   $\al P=Nm^{-1}(\al O(S))$, where $Nm$ is the norm map attached to the cover $\tilde{X}_s \ra \tilde{X}_s /\iota$ (resp. $\hat{X}_s \ra \hat{X}_s /\iota$), then, by \cite{NH}, $\sr H^{-1}(s)$ is an open dense of $\al P$. Thus the pushforward map $$\al P\dashrightarrow \al M_X(\text{Sp}_{2m})\;\;\;(\text{resp. } \;\al M_X(\text{SO}_r))$$ is dominant rational map. \\
Remark that in the symplectic case, the involution $\iota$ has some fixed points, this implies that $\al P$ is irreducible. While in the orthogonal case, $\iota$ is \'etale, hence $\al P$ has two connected components, each one of them dominates a connected component of $\al M_X(\text{SO}_r)$. In particular we deduce a cohomological criterion identifying the two connected components of $\al M_X(\text{SO}_r)$, more explicitly, take an even theta characteristic $\kappa$ of $X$, then the two components are distinguished by the parity of $h^0(X,E\otimes\kappa).$ This is the same as the criterion given by the Stiefel-Whitney class (see for example \cite{BA}).
\end{proof}

\section{Invariant and anti-invariant vector bundles}
 Let $X$ be a smooth projective irreducible curve with an involution $\sigma:X\ra X$. Let $Y:=X/\sigma$ be the quotient, which is smooth, and denote by $\pi:X\ra Y$ the double cover map. Denote by $R\subset X$ the ramification locus of $\pi$, and let $\text{deg}(R)=2n$. If $g_X$ and $g_Y$ are the genus of $X$ and $Y$, then by Hurwitz formula we have $$g_X=2g_Y-1+n.$$
We denote by $\al U_X(r,d)$ the moduli space of \emph{stable} vector bundles of rank $r$ and degree $d$ over $X$.

Let $Z$ be a smooth variety over $\bb C$ with an involution $\tau$. We define the fixed locus of  $\tau$, denoted $Z^\tau$, to be the intersection of the diagonal $Z\subset Z\times Z$ with the graph $\Gamma_\tau\subset Z\times Z$ of the involution $\tau$.
It is clearly a closed subvariety of $Z$. Moreover, we have
\begin{lemm} \label{smoothnesslemma}
The fixed locus $Z^\tau$ is smooth.
\end{lemm}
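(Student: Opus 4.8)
The plan is to prove that the fixed locus $Z^\tau$ of an involution $\tau$ on a smooth variety $Z$ is itself smooth. Since smoothness is a local property, it suffices to work in a formal or analytic neighborhood of a fixed point $z_0\in Z^\tau$, and the key idea is to \emph{linearize} the involution near such a point using the averaging trick available in characteristic zero (recall the ground field is $\bb C$).

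First I would examine the action of $\tau$ on the tangent space $T_{z_0}Z$. Since $\tau^2=\text{id}$ and we are in characteristic $0$, the induced linear involution $d\tau_{z_0}$ on $T_{z_0}Z$ is diagonalizable, giving an eigenspace decomposition $T_{z_0}Z=V_+\oplus V_-$ into the $(+1)$ and $(-1)$ eigenspaces. The heart of the argument is to produce $\tau$-equivariant local coordinates: starting from any system of local (analytic, or formal) coordinates at $z_0$, one averages over the group $\gen{\tau}\cong\bb Z/2\bb Z$, replacing a coordinate chart $\varphi$ by $\tfrac12(\varphi+d\tau_{z_0}^{-1}\circ\varphi\circ\tau)$, to obtain a chart in which $\tau$ acts \emph{linearly}, namely by $d\tau_{z_0}=\text{diag}(1,\dots,1,-1,\dots,-1)$. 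This is the standard local linearization of a finite-order automorphism fixing a point (Cartan's lemma).

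Once $\tau$ is linear in suitable coordinates $(u_1,\dots,u_p,v_1,\dots,v_q)$ with $\tau(u,v)=(u,-v)$, the fixed locus is cut out near $z_0$ precisely by the vanishing of the $(-1)$-eigencoordinates, $v_1=\cdots=v_q=0$. Hence locally $Z^\tau$ is isomorphic to the linear subspace $V_+$, which is smooth of dimension $p=\dim V_+$. As this holds at every fixed point, $Z^\tau$ is smooth everywhere (though its dimension may vary between connected components). I would remark, consistently with the definition given via the diagonal and the graph $\Gamma_\tau$, that this local model also identifies the tangent space $T_{z_0}Z^\tau$ with the invariant subspace $(T_{z_0}Z)^\tau=V_+$.

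The main obstacle is establishing the equivariant linearization rigorously in the algebro-geometric setting rather than merely the complex-analytic one: the averaging construction is transparent for convergent or formal power series, but to stay within algebraic geometry one should either invoke the existence of $\tau$-stable étale neighborhoods and linearize on the completed local ring $\hat{\al O}_{Z,z_0}$, or appeal to Luna's étale slice theorem for the action of the finite (hence reductive) group $\gen{\tau}$. I expect the formal/complete-local version to be the cleanest: averaging a coordinate system of $\hat{\al O}_{Z,z_0}$ yields $\gen{\tau}$-semiinvariant generators, showing the completed local ring of $Z^\tau$ is a quotient of a regular ring by part of a regular system of parameters, hence regular — and regularity of all completed local rings gives smoothness of $Z^\tau$.
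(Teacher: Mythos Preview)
Your proof is correct and follows essentially the same approach as the paper: the paper's proof simply asserts that the action can be linearised locally around any fixed point (citing Edixhoven for the general statement), and you have spelled out exactly this linearisation via the averaging trick and the eigenspace decomposition. Your additional remarks on handling the algebro-geometric setting (formal completion, Luna's slice theorem) go beyond what the paper provides but are in the same spirit.
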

\begin{proof} 
The action can be linearised locally around any point $z\in Z^{\tau}$. In fact, this is true in more general context (see Edixhoven \cite{EB}).  
\end{proof}

\subsection{Invariant vector bundles} \label{invariant}
\subsubsection{}A vector  bundle $E$ on $X$ is called \emph{$\sigma-$invariant} (or simply \emph{invariant}) if there exists an isomorphism $$\varphi:\sigma^*E\stackrel{\sim}{\lra} E.$$  
A \emph{linearisation} of the $\sigma-$action on $E$ is an isomorphism  $$\varphi:\sigma^*E\stackrel{\sim}{\lra} E$$ 
such that $\varphi\circ\sigma^*\varphi=id_E$. Hence a linearisation corresponds to a lifting of the involution $\sigma$ to an involution $\tilde{\sigma} :E\ra E$, such that the following diagram 
$$\xymatrix{E\ar[dr]^{\varphi^{-1}} \ar@/^1pc/[rrd]^{\tilde{\sigma}} \ar@/_1pc/[rdd] & &  \\ & \sigma^*E\ar[r]\ar[d] & E\ar[d] \\ & X\ar[r]^\sigma &X}$$ commutes. Using the linearisation $\varphi$ we obtain a linear involution on the space of global sections of $E$, given by $$s\lra \varphi(\sigma^*s).$$ 
We denote its proper subspaces by $H^0(X,E)_\pm$.
\begin{rema} If $E$ is stable $\sigma-$invariant, there are only $2$ linearisations, $\varphi$ and $-\varphi$.  
\end{rema}
Suppose that $E$ is a $\sigma-$invariant \emph{stable} vector bundle and $\varphi:\sigma^*E\ra E$. We define the \emph{type} of $E$ to be $$\tau=\left(\varphi_p\right)_{p\in R} \mod \pm I_r,$$ with $\varphi_p\in \text{End}(E_p)$. We denote usually by $k_p$ the multiplicity of the $-1$ eigenvalue of $\varphi_p$. Note that the vectors $(k_p)_p$  and $(r-k_p)_p$ represent the same type (due to multiplication by $-1$). Moreover, by looking at the determinant of $E$, which is $\sigma-$invariant, we obtain the following relation between the type and the degree $d$ of $E$  $$\sum_{p\in R}k_p\equiv d\mod2.$$
Indeed, define $F$ to be the kernel of $$0\ra F\ra E\ra \bigoplus_{p\in R}(E_p)_-\ra 0.$$ By Kempf's Lemma (see Lemma \ref{kempf} below), it follows that $F$ descends to $Y$, hence $$d-\sum_{p\in R}k_p=\text{deg}(F)\equiv 0\mod2.$$
Denote by $\al U_X^{\sigma,\tau}(r,d)\subset \al U_X(r,d)$ the locus of classes $[E]\in \al U_X(r,0)$ such that $E$ is  $\sigma-$invariant stable vector bundle of type $\tau$. Note that $\al U_X^{\sigma,\tau}(r,d)$ is smooth by Lemma \ref{smoothnesslemma}.\\
The following lemma is very useful (see \cite{ND}):  
\begin{lemm}{(Kempf Lemma)}\label{kempf}
Let $(E,\varphi)$ be a $\sigma-$linearised vector bundle on $X$ (that's a $\sigma-$invariant vector bundle with a linearisation). Then $E$ descends to $Y$ (i.e $E\cong \pi^* F$ for some vector bundle $F$ on $Y$, and $\varphi$ is the canonical associated linearisation) if and only if $\varphi_p=id$, for any $p\in R$.
\end{lemm}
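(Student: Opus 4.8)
The statement is local on $Y$, so the plan is to reduce everything to the branch points. Away from the branch locus $\pi(R)$ the map $\pi$ is étale and Galois with group $\mathbb{Z}/2=\langle\sigma\rangle$, so faithfully flat (Galois) descent makes every linearised bundle descend there, and since $\varphi_p$ is only defined for $p\in R$ there is no condition to check off the ramification. Hence I would work in the completed local ring at a fixed point $p\in R$: put $A=\mathbb{C}[[u]]$, the completion of $\mathcal{O}_{Y,\pi(p)}$, and $B=\mathbb{C}[[t]]$, that of $\mathcal{O}_{X,p}$, so that $B=A[t]/(t^2-u)$ is free of rank two over $A$ with $\sigma:t\mapsto -t$. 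Then $E$ corresponds to a free $B$-module $M$ of rank $r$, $\varphi$ to a $\sigma$-semilinear involution $\tilde\sigma:M\to M$, and $\varphi_p$ to the involution induced on the fibre $\bar M=M/tM$.

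The engine of the proof is the $A$-module decomposition $M=M^{+}\oplus M^{-}$ into the $(\pm1)$-eigenspaces of $\tilde\sigma$. Since $\tilde\sigma(tm)=-t\,\tilde\sigma(m)$, multiplication by $t$ interchanges $M^{+}$ and $M^{-}$, and $t^2=u$ shows both maps $t:M^{+}\to M^{-}$ and $t:M^{-}\to M^{+}$ are injective; as $M^{+},M^{-}$ are torsion-free over the DVR $A$ and embed in one another, they are free of equal rank, necessarily $r$. Writing $B=A\oplus At$, the natural (counit) map $\pi^*(\pi_*E)_{+}\to E$ becomes, locally, $N\otimes_A B\to M$ with $N=M^{+}$, i.e. the map $(\mathrm{id},t):M^{+}\oplus M^{+}\to M^{+}\oplus M^{-}$; this is an isomorphism exactly when $t:M^{+}\to M^{-}$ is. Here $(\pi_*E)_{+}$ denotes the $+1$-eigensheaf of the $\mathcal{O}_Y$-linear involution $\Phi$ that $\varphi$ induces on $\pi_*E$ (it is $\mathcal{O}_Y$-linear because functions pulled back from $Y$ are $\sigma$-invariant), and we have just seen it is locally free of rank $r$.

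What remains, and where the real content lies, is to identify $\operatorname{coker}(t:M^{+}\to M^{-})$. Reducing modulo $t$ and using $tM^{+}\subseteq M^{-}$, $tM^{-}\subseteq M^{+}$ gives $\bar M=(M^{+}/tM^{-})\oplus(M^{-}/tM^{+})$, where $\varphi_p$ acts by $+1$, resp. $-1$; thus the $(-1)$-eigenspace of $\varphi_p$ is precisely $M^{-}/tM^{+}=\operatorname{coker}(t:M^{+}\to M^{-})$, of dimension $k_p$. Therefore the counit is an isomorphism at $p$ iff $k_p=0$, i.e. iff $\varphi_p=\mathrm{id}$. For the ``if'' direction I would then glue: if every $k_p=0$ the counit is an isomorphism everywhere, so $E\cong\pi^*F$ with $F=(\pi_*E)_{+}$, and the canonical linearisation of $\pi^*F$ matches $\varphi$ because the identification is built from $\varphi$ and agrees with the canonical one on the étale locus. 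For ``only if'', writing $M=N_0\otimes_A B$ with $\tilde\sigma=\mathrm{id}\otimes\sigma$ for a descended bundle shows directly $M^{+}=N_0$, $M^{-}=N_0t$, hence $k_p=0$. I expect the one delicate point to be exactly this matching of $\operatorname{coker}(t)$ with the $(-1)$-eigenspace $\bar M_{-}$, i.e. getting the local normal form of the semilinear involution right; the gluing and the compatibility of linearisations are then routine.
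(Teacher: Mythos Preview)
Your argument is correct. Note, however, that the paper does not actually prove this lemma: it merely states it and refers to \cite{ND} for a proof, so there is no in-paper argument to compare against. What you have written is a complete and self-contained proof via local analysis in the completed local ring at a ramification point, using the eigenspace decomposition $M=M^{+}\oplus M^{-}$ of the semilinear involution and the identification of $\operatorname{coker}(t\colon M^{+}\to M^{-})$ with the $(-1)$-eigenspace of $\varphi_p$. The global construction $F=(\pi_*E)_{+}$ together with the counit $\pi^*F\to E$ is the right way to package the gluing, and your verification that this counit is an isomorphism precisely when each $k_p=0$ is clean. The ``only if'' direction is immediate as you say.

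One small remark: you implicitly use that $(\pi_*E)_{+}$ is locally free of rank $r$ everywhere on $Y$, not just at branch points; away from the branch locus this follows from \'etale descent (or simply from $\pi$ being \'etale there, so $\pi_*E$ is locally free of rank $2r$ and the eigenspaces split it evenly), and at branch points from your rank computation. It might be worth one sentence making this uniform. Otherwise the proof is complete.
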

A consequence of this lemma is the following 
\begin{lemm} \label{descentcanonical} The canonical line bundle $K_X$ of $X$   descends to $Y$.
\end{lemm}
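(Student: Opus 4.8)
The plan is to produce a linearisation of the $\sigma$-invariant line bundle $K_X$ whose fibre at every ramification point is the identity, so that Kempf's Lemma (Lemma \ref{kempf}) applies directly. First I would note that $K_X$ is canonically $\sigma$-invariant: since $\sigma$ is an automorphism of $X$, the codifferential gives a canonical isomorphism $\varphi:\sigma^*K_X\stackrel{\sim}{\lra}K_X$, and by functoriality of the codifferential applied to $\sigma\circ\sigma=\mathrm{id}_X$ one gets $\varphi\circ\sigma^*\varphi=\mathrm{id}$, so that $\varphi$ is a genuine linearisation. Everything then reduces to computing the scalars $\varphi_p\in\{\pm1\}$ for $p\in R$.

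Next I would carry out this computation, which is purely local and, crucially, identical at every ramification point. Near a point $p\in R$ the double cover $\pi$ has the standard form $z\mapsto z^2$, and $\sigma$ is the deck involution $z\mapsto-z$ in a suitable local coordinate $z$ centred at $p$. Hence $\sigma^*(dz)=-dz$, and as $(K_X)_p$ is spanned by $dz$ this yields $\varphi_p=-\mathrm{id}$ for every $p\in R$. Since the sign is $-1$ uniformly over $R$, the opposite linearisation $\psi:=-\varphi$ (again a linearisation, as $(-\varphi)\circ\sigma^*(-\varphi)=\varphi\circ\sigma^*\varphi=\mathrm{id}$) satisfies $\psi_p=\mathrm{id}$ for all $p\in R$. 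Kempf's Lemma then shows that $(K_X,\psi)$ descends to $Y$, which is the claim.

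The only delicate point is the uniformity of the sign: the argument succeeds precisely because $(\pi,\sigma)$ has the same local model $z\mapsto z^2$, $z\mapsto-z$ at each ramification point, so that $\varphi_p=-\mathrm{id}$ simultaneously at all of $R$ and a single global sign change repairs it; were the signs mixed, no linearisation could be trivial at every $p$. As an independent check one can bypass linearisations altogether via Hurwitz: $K_X\cong\pi^*K_Y\otimes\al O_X(R)$, and $\al O_X(R)\cong\pi^*L$ for the line bundle $L$ on $Y$ defining the double cover, so that $K_X\cong\pi^*(K_Y\otimes L)$ descends; but the linearisation-theoretic proof is the one in line with the preceding discussion.
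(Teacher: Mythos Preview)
Your proof is correct and follows essentially the same route as the paper: construct the canonical linearisation of $K_X$ (the paper does it on $K_X^{-1}$ via the differential, you on $K_X$ via the codifferential), compute in the local model $z\mapsto -z$ that it equals $-1$ at every ramification point, and invoke Kempf's Lemma. You are slightly more explicit than the paper in spelling out that one must pass to the opposite linearisation $-\varphi$ before applying Kempf, and your Hurwitz sanity check is a nice addition, but the argument is the same.
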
 
\begin{proof}
By differentiating the involution $\sigma:X\ra X$ we get a linear isomorphism $$d\sigma:K_X^{-1}\ra \sigma^*K_X^{-1}.$$ As $\sigma\circ\sigma=id$, it follows $$d\sigma\circ\sigma^*(d\sigma)=id.$$
Hence $d\sigma$ is a linearisation of $K_X^{-1}$. Moreover, if $t$ is a local parameter near a ramification point $p\in R$, then $\sigma(t)=-t$, hence $d\sigma=-1$. So by Lemma \ref{kempf} we deduce that $K_X$ descends to $Y$.
\end{proof}
In particular, as by Hurwitz formula $\al O_X(R)=K_X\otimes \pi^* (K_Y^{-1})$, we deduce that $\al O_X(R)$ descends to $Y$. Furthermore, if we denote by $\Delta$ the line bundle on $Y$ such that $$\pi_*\al O_X=\al O_Y\oplus \Delta^{-1},$$ then $\al O_X(R)=\pi^*\Delta$, hence $K_X=\pi^*(K_Y\otimes\Delta)$.

\begin{rema} \label{propersubspaces}
Suppose that $\pi$ is ramified. Let $L$ be a line bundle on $Y$, then $\pi^*L$ has a canonical linearisation. We call it the  \emph{positive} linearisation, (because it equals $+id$ over each $p\in R$). Its opposite is called the \emph{negative} linearisation.  Moreover, with respect to the positive linearisation, we have $$H^0(X,\pi^*L)_+\cong H^0(Y,L),\;\;H^0(X,\pi^*L)_-\cong H^0(Y,L\otimes\Delta^{-1}).$$
If $\pi:X\ra Y$ is \'etale, then  $K_X=\pi^* K_Y=\pi^*(K_Y\otimes\Delta)$. We define the positive linearisation on $K_X$ to be the linearisation attached to $K_Y\otimes \Delta$.\\
\end{rema}

\subsubsection{Infinitesimal study}\label{infinvariant}
The tangent space to the moduli space $\al U_X(r,d)$ at a smooth point $E$ is given by $$\text{T}_E \al U_X(r,d)\cong H^1( X, \text{End}(E)),$$
where $\text{End}(E)\cong E\otimes E^*$ stands for the sheaf of endomorphisms of $E$. \\

Recall that a deformation of $E$ aver $Spec(\bb C[\varepsilon])$ ($\varepsilon^2=0$) is defined to be a locally free coherent sheaf $\sr E$ on $ X_\varepsilon= X\times Spec(\bb C[\varepsilon])$ together with a homomorphism $\sr E\ra E$ of $\al O_{ X_\varepsilon}-$module, such that  the induced map $\sr E\otimes \al O_{ X}\ra E$ is an isomorphism. Canonically, the set of deformation of $E$ over $Spec(\bb C[\varepsilon])$ is isomorphic to $H^1( X,\text{End}(E))$. As by definition, a deformation is locally free, so it is flat, thus taking the tensor product of the exact sequence  $$0\ra \al O_{ X}\stackrel{\varepsilon}{\ra }\al O_{ X_\varepsilon} \ra \al O_{ X}\ra 0$$ by $\sr E$ we get   $$0\ra E \stackrel{\varepsilon}{ \ra}\sr E\ra E\ra 0.$$

Assume now that $E$ is stable $\sigma-$invariant vector bundle of rank $r$ and degree $d$, let $\tau$ be its type. We want to identify the tangent space to $\al U_X^{\sigma,\tau}(r,d)$ at $E$.\\ 
 The linearisation $\varphi:\sigma^*E\ra E$  induces a linear involution $f$ on the tangent space $T_E\al U_X(r,d)\cong H^1(X,E\otimes E^*)$ given for local sections $x_{ij}\otimes \lambda_{ij}$ of $E\otimes E^*$ by $$f(x_{ij}\otimes \lambda_{ij})=\varphi(\sigma^*(x_{ij}))\otimes \sigma^*(\,^t\varphi(\lambda_{ij})).$$ 
Clearly, this involution does not depend on the choice of $\varphi$. \\ 
Given $\eta=(\eta_{ij})\in H^1(X,E\otimes E^*)$, it corresponds to an infinitesimal deformation  $$0\ra E\ra \sr E\ra E\ra 0$$
over $X_\varepsilon$. In fact if we set $$g_{ij}=\phi_i\circ(id+\varepsilon\eta_{ij})\circ\phi_j^{-1},$$ where $\phi_i:E|_{U_i}\ra U_i\times \bb C^r$ are some local trivialisations of $E$, then $\{g_{ij}\}$ are transition functions of $\sr E$ (we will prove this in \S \ref{infstu2}, Lemma \ref{gij} below). \\ 
Now $\eta \in T_E\al U_X^{\sigma,\tau}(r,d)$ if and only if $\sr E$ is $\sigma-$invariant. We can choose $\phi_i$ to be $\sigma-$invariant, hence $\sr E$ is $\sigma-$invariant iff  $\eta$ is invariant with respect to $f$. Thus $$T_E\al U_X^{\sigma,\tau}(r,d)\cong H^1(X,E\otimes E^*)_+.$$
\begin{prop} \label{invariantdimension}
The dimension of the locus of $\sigma-$invariant vector bundles of fixed type $\tau$ is given by $$\text{dim}(\al U_X^{\sigma,\tau}(r,d))=r^2(g_Y-1)+1+\sum_{p\in R}k_p(r-k_p),$$
where $(k_p)_{p\in R}$ are the integers associated to $\tau$.
\end{prop}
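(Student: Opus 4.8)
The plan is to compute the dimension of the smooth variety $\al U_X^{\sigma,\tau}(r,d)$ as the dimension of its tangent space at a point $[E]$, which by the infinitesimal analysis just carried out equals $\dim H^1(X,E\otimes E^*)_+$, the invariant part under the involution $f$ induced by the linearisation $\varphi$. The whole computation reduces to a Riemann--Roch calculation on the quotient curve $Y$, once the degree of the invariant part of the direct image $\pi_*(E\otimes E^*)$ is determined.

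First I would transport everything to $Y$. Since $\pi$ is finite, $H^i(X,E\otimes E^*)=H^i(Y,\pi_*(E\otimes E^*))$, and because we work over $\bb C$ the $\bb Z/2$-action splits this compatibly, so that $H^i(X,E\otimes E^*)_\pm\cong H^i\big(Y,(\pi_*(E\otimes E^*))_\pm\big)$. As $E$ is stable, $H^0(X,E\otimes E^*)=\bb C\cdot\mathrm{id}$, and the identity is fixed by $f$; hence $h^0_+=1$ and $h^0_-=0$, giving $\chi\big(Y,(\pi_*(E\otimes E^*))_+\big)=1-h^1(X,E\otimes E^*)_+$. The invariant summand has rank $r^2$ (away from the branch locus $\pi$ is \'etale and the fibre splits evenly), so Riemann--Roch on $Y$ yields
$$h^1(X,E\otimes E^*)_+ = 1 + r^2(g_Y-1) - \deg\big((\pi_*(E\otimes E^*))_+\big),$$
and it remains only to show that this last degree equals $-\sum_{p\in R}k_p(r-k_p)$.

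The key step is a general formula for the degree of the invariant direct image. For any $\sigma$-linearised bundle $V$ of rank $m$ and degree $\delta$, with $m_p^-$ the multiplicity of the eigenvalue $-1$ of $\varphi_p$ at $p\in R$, I claim $\deg\big((\pi_*V)_+\big)=\tfrac12\big(\delta-\sum_{p\in R}m_p^-\big)$. To prove this I would use the canonical section $s_R\in H^0(X,\al O_X(R))$ cutting out $R$, which is anti-invariant (locally $s_R=t$ with $\sigma^*t=-t$) for the positive linearisation of $\al O_X(R)=\pi^*\Delta$. Multiplication by $s_R$ gives an $\al O_Y$-linear injection $(\pi_*V)_+\hookrightarrow (\pi_*V)_-\otimes\Delta$ whose cokernel, by a local diagonalising computation of the $\sigma$-action, has length exactly $m_p^-$ at each $p$; comparing degrees gives $\deg(\pi_*V)_+-\deg(\pi_*V)_-=mn-\sum_p m_p^-$, and combining with $\deg(\pi_*V)_++\deg(\pi_*V)_-=\deg(\pi_*V)=\delta-mn$ (from $\det\pi_*\al O_X=\Delta^{-1}$, $\deg\Delta=n$ and $\deg\mathrm{Nm}=\deg$) yields the claimed formula.

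Finally I would specialise to $V=E\otimes E^*$, where $\delta=0$ and the linearisation at $p$ acts by conjugation $A\mapsto\varphi_p A\varphi_p^{-1}$. Diagonalising $\varphi_p$ with $k_p$ eigenvalues $-1$ and $r-k_p$ eigenvalues $+1$, the blocks $\mathrm{Hom}(V_+,V_+)$ and $\mathrm{Hom}(V_-,V_-)$ sit in the $(+1)$-eigenspace while $\mathrm{Hom}(V_+,V_-)$ and $\mathrm{Hom}(V_-,V_+)$ sit in the $(-1)$-eigenspace, so $m_p^-=2k_p(r-k_p)$. Substituting gives $\deg(\pi_*(E\otimes E^*))_+=-\sum_p k_p(r-k_p)$ and hence the stated dimension. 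The hard part will be the degree-of-invariant-image formula: getting the sign conventions and the local length-$m_p^-$ cokernel computation right is the delicate point, whereas the eigenvalue bookkeeping for the conjugation action and the final Riemann--Roch are routine.
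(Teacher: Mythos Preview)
Your argument is correct, but it follows a genuinely different route from the paper's. The paper computes $h^1_+$ by setting up a $2\times 2$ linear system on $X$: Riemann--Roch gives $h^1_++h^1_-=r^2(g_X-1)+1$, and the holomorphic Lefschetz (Atiyah--Bott) fixed point formula gives $h^1_+-h^1_-=1-\tfrac12\sum_{p\in R}\mathrm{Tr}(f_p)$ with $\mathrm{Tr}(f_p)=(r-2k_p)^2$; one then solves and rewrites via Hurwitz. You instead push everything to $Y$, apply Riemann--Roch there to the rank-$r^2$ bundle $(\pi_*(E\otimes E^*))_+$, and reduce to computing its degree via the general formula $\deg(\pi_*V)_+=\tfrac12(\delta-\sum_p m_p^-)$, which you establish by a direct local length-of-cokernel computation for the multiplication-by-$s_R$ map.

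The two approaches are essentially dual: your local computation of $m_p^-$ for $(\pi_*V)_+\hookrightarrow(\pi_*V)_-\otimes\Delta$ is precisely what the Lefschetz formula packages abstractly, and both feed the same eigenvalue count $m_p^-=2k_p(r-k_p)$ for conjugation into the final formula. Your route is more elementary---it avoids citing Atiyah--Bott and stays within basic sheaf theory---and the degree formula you isolate is a reusable lemma in its own right. The paper's route is shorter to write if one is willing to invoke the fixed point theorem, and it is the method used again later (e.g.\ in the anti-invariant dimension count and in formula~(\ref{cases})), so it fits the paper's internal economy. Either way the delicate point you flag---signs and the length-$m_p^-$ cokernel---is exactly the content of the trace term in Lefschetz.
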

\begin{proof}
To calculate the dimension of $H^1(X,E\otimes E^*)_+$ we use Lefschetz fixed point theorem (cf. \cite{AB}), to simplify the notations let $$h^1_\pm=\text{dim}_{\bb C}\left(H^1(X,E\otimes E^*)_\pm\right).$$
We have 
$$\begin{cases}      
h^1_++h^1_-=r^2(g_X-1)+1 &\text{(By Riemann-Roch Formula)}\\
h^1_+-h^1_-=1 -\dfrac{1}{2}\sum_{p\in R}\text{Tr}(f_p) &\text{(By Lefschetz fixed point theorem)}
\end{cases},$$
we have used the fact that $h^0(X,E\otimes E^*)_+=1$ (the identity $E\ra E$ is $\sigma-$invariant).\\
By the very definition, $f_p=\varphi_p\otimes \varphi_p$, it follows that the multiplicity of the eigenvalue $-1$ of $f_p$ is $2k_p(r-k_p)$, hence  $Tr(f_p)= (r-2k_p)^2$, so we have 
$$\begin{cases} 
h^1_++h^1_-=2r^2(g_Y-1)+r^2n+1 \\
h^1_+-h^1_-=1-r^2n+2\sum_{p\in R}k_p(r-k_p)
\end{cases}.$$
It follows $$\text{dim}(\al U_X^{\sigma,\tau}(r,d))=h^1_+= r^2(g_Y-1)+1+\sum_{p\in R}k_p(r-k_p).$$   
\end{proof}
In particular, since $\text{det}:  \al U_X^{\sigma,\tau}(r,0)\ra \text{Pic}^{\sigma,\tilde{\tau}}(X)$ is surjective, where $\tilde{\tau}=\{(-1)^{k_p}\}_{p\in R}$, we have 
\begin{align*}\text{dim}(\al{SU}_X^{\sigma,\tau}(r))&=\text{dim}(\al U_X^{\sigma,\tau}(r,0))-g_Y \\&= (r^2-1)(g_Y-1)+\sum_{p\in R}k_p(r-k_p).
\end{align*}
\begin{rema}\label{maxtype} The dimension of the locus of $\sigma-$invariant vector bundle $\al U_X^\sigma(r,d)$ is the maximum of these dimensions : $$\text{dim}(\al U_X^\sigma(r,d))=\begin{cases} 
r^2(g_Y-1)+n\frac{r^2}{2}+1 &r\equiv 0\mod2 \\
r^2(g_Y-1)+n\frac{r^2-1}{2}+1 &r\equiv 1\mod2 
\end{cases}.$$
These dimensions correspond to the following types (called \emph{maximal types})  $$\ak{MAX}= \begin{cases} 
\left\lbrace \tau=(\varphi_p)_p\mod\pm I_r\,|\; k_p=r/2,\;\forall p\in R \right\rbrace &r\equiv 0\mod2 \\
\left\lbrace\tau=(\varphi_p)_p\mod\pm I_r\,|\;k_p=(r+1)/2\,\text{or}\;k_p=(r-1)/2\right\rbrace \;  &r\equiv 1\mod2 
\end{cases}.$$
In the odd case, the cardinal of $\ak{MAX}$ is  $2^{2(n-1)}$. \\
\end{rema}

\begin{rema}\label{parabolic}
Using the results of Balaji and Seshadri (see \cite{BS}), we can identify the moduli space of stable  $\sigma-$invariant vector bundles of type $\tau$ with the space of stable parahoric $\al G_\tau-$torsors over $Y$
$$\al U_X^{\sigma,\tau}(r,d)\cong \al M_Y(\al G_\tau),$$ for some parahoric Bruhat-Tits group scheme $\al G_\tau$ associated to the type $\tau$. In fact, as we deal with $\text{GL}_r-$bundles, the parahoric group scheme $\al G_\tau$ is of parabolic type, which implies that the moduli of $\sigma-$invariant vector bundles of type $\tau$ is isomorphic to the moduli space of parabolic vector bundles with  parabolic structures, related to $\tau$, at the branch points of $X\ra Y$.
\end{rema}

\subsection{Anti-invariant vector bundles}\label{antiinvariant}
\subsubsection{} A vector bundle $E$ over $X$ that admits an isomorphism $$\psi:\sigma^*E\stackrel{\sim}{\longrightarrow} E^*$$
is called a \emph{$\sigma-$anti-invariant} (or simply \emph{anti-invariant}) vector bundle, where $E^*$ is the dual vector bundle. If $E$ is stable, then this isomorphism is unique up to scalar multiplication. Take an isomorphism $$\psi: \sigma^*E\stackrel{\sim}{\longrightarrow} E^*,$$  by pulling back and taking the transpose we get an isomorphism  $$^t(\sigma^*\psi):\sigma^*E\stackrel{\sim}{\longrightarrow} E^*,$$
so there is a non-zero $\lambda\in \bb C$ such that $^t(\sigma^*\psi)=\lambda \psi$. By applying $\sigma^*$ and taking the transpose on this last equality, we deduce $\lambda^2=1$, thus $\lambda=\pm1$.\\
Denote by $\tilde{\psi}$ the non-degenerated bilinear form canonically associated to  $\psi$: $$\tilde{\psi}:E\otimes\sigma^* E\ra \al O_{ X}.$$  
In fact we obtain $\tilde{\psi}$ as the composition  $$\tilde{\psi}:E\otimes \sigma^*E\stackrel{ id\otimes\psi}{\xrightarrow{\hspace*{1cm}}} E\otimes E^*\stackrel{Tr}{\longrightarrow} \al O_{ X}.$$

\begin{defi} We say that $(E,\psi)$ (or $(E,\tilde{\psi})$) is \emph{$\sigma-$symmetric} (resp. \emph{$\sigma-$alternating}) if $\lambda=1$ (resp.  $\lambda=-1$). We denote by $$\al U_X^{\sigma,+}(r)\subset\al U_X(r,0)\;,\;\;\al U_X^{\sigma,-}(r)\subset\al U_X(r,0)$$ the loci of classes  of stable $\sigma-$symmetric (resp. $\sigma-$alternating) vector bundles $E$.
\end{defi} 
Note that $\al U_X^{\sigma,+}(r)$ and  $\al U_X^{\sigma,-}(r)$ are smooth by Lemma \ref{smoothnesslemma}.
\begin{obse} Let $E$ be a $\sigma-$anti-invariant stable vector bundle of rank $r$ such that $r\equiv 1 \mod 2$ and assume that $\pi:X\ra Y$ is ramified, then $E$ is necessarily $\sigma$-symmetric.
\end{obse}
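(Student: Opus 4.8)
The plan is to restrict the anti-invariance isomorphism to a ramification point and invoke the standard parity obstruction for nondegenerate alternating forms. Since $\pi$ is ramified the ramification locus $R$ is nonempty; I would fix a point $p\in R$, so that $\sigma(p)=p$.

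First I would examine the fiber of $\psi:\sigma^*E\xrightarrow{\sim}E^*$ at $p$. Because $\sigma(p)=p$, the natural identification $(\sigma^*E)_p=E_{\sigma(p)}=E_p$ turns $\psi_p$ into an isomorphism $E_p\xrightarrow{\sim}E_p^*$, that is, a nondegenerate bilinear form $B_p$ on the $r$-dimensional vector space $E_p$; concretely $B_p=\tilde{\psi}_p$, the restriction of the pairing $\tilde{\psi}:E\otimes\sigma^*E\to\al O_X$ to the fiber over $p$.

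Next I would translate the sign $\lambda$ appearing in $^t(\sigma^*\psi)=\lambda\psi$ into the symmetry type of $B_p$. The key observation is that pulling back a bundle morphism has no effect on its fiber at a fixed point: $(\sigma^*\psi)_p=\psi_{\sigma(p)}=\psi_p$, and no factor of $d\sigma_p$ enters, since $d\sigma$ acts on the tangent directions of $X$ and not on the fibers of $E$. Hence the defining relation restricts at $p$ to $^t\psi_p=\lambda\,\psi_p$, which says precisely that $B_p$ is symmetric when $\lambda=1$ and alternating when $\lambda=-1$.

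The conclusion is then a dimension count: a nondegenerate alternating form can exist on $E_p$ only if $\dim E_p=r$ is even. Therefore if $r$ is odd the case $\lambda=-1$ is impossible, forcing $\lambda=1$, i.e.\ $E$ is $\sigma$-symmetric. The only point requiring care is the identification $(\sigma^*\psi)_p=\psi_p$ and the compatible translation of $^t(\sigma^*\psi)=\lambda\psi$ into the transpose relation $B_p^{\,t}=\lambda B_p$ for the fiberwise form; once this is set up correctly the parity obstruction finishes the argument immediately.
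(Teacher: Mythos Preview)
Your proposal is correct and follows essentially the same approach as the paper: restrict to a ramification point $p$, observe that $\psi_p$ is then a nondegenerate bilinear form on $E_p$ which is alternating precisely when $\lambda=-1$, and conclude by the parity obstruction. Your write-up is in fact more careful than the paper's, which simply asserts that $\psi_p$ is a symplectic form on $E_p$ without spelling out the identification $(\sigma^*\psi)_p=\psi_p$.
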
 
Indeed, suppose that $r$ is odd and that there exists a $\psi:\sigma^*E\ra E^*$ which is a $\sigma-$alternating isomorphism. Let $p\in R$ be a ramification point, then $\psi_p$ is a symplectic form on $E_p$, this implies that $\text{dim}_{\bb C}(E_p)=rk(E)$ is necessarily even.\\ 

Let us see the case of line bundles. Consider a line bundle $L$ such that $$\sigma^*L\cong L^{-1}.$$ Because we have $L\otimes \sigma^*L\cong \pi^*\text{Nm}(L)$, it follows that $\pi^*\text{Nm}(L)\cong \al O_X$, hence two cases may happen:
\begin{enumerate}
\item If $\pi$ is ramified, then $\pi^*$ is injective, so $\text{Nm}(L)=\al O_Y$.  
\item If $\pi$ is \'etale, then the kernel of $\pi^*$ is $\{\al O_Y,\Delta\}$, so either $\text{Nm}(L)=\al O_Y$ or $\text{Nm}(L)=\Delta$.  
\end{enumerate}
\begin{lemm}\label{normlinebundle}
If $L$ is a line bundle such that $\text{Nm}(L)=\al O_X$ then $L$ is $\sigma-$symmetric. Assume that $\pi$ is \'etale, then if $\text{Nm}(L)=\Delta$ then $L$ is $\sigma-$alternating.
\end{lemm}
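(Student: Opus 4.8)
The plan is to reduce the symmetric/alternating dichotomy to a computation of eigenspaces of the positive linearization on a pullback bundle. Since $L$ is a line bundle, giving an isomorphism $\psi:\sigma^*L\ra L^{-1}$ is the same as giving the nondegenerate pairing $\tilde{\psi}:L\otimes\sigma^*L\ra\al O_X$, equivalently a nowhere-vanishing section of $(L\otimes\sigma^*L)^{-1}$. Moreover $\psi$ is unique up to a nonzero scalar, and the relation ${}^t(\sigma^*\psi)=\lambda\psi$ is homogeneous in $\psi$, so $\lambda$ is well defined and it suffices to compute it for one convenient choice.

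First I would translate the defining relation into a statement about $\tilde{\psi}$. The line bundle $L\otimes\sigma^*L$ carries a canonical linearization $\theta$ obtained by pulling back along $\sigma$ and swapping the two factors, $\sigma^*(L\otimes\sigma^*L)=\sigma^*L\otimes L\xrightarrow{\text{sw}}L\otimes\sigma^*L$. Unwinding the definition of $\tilde{\psi}$ (and using that for line bundles the double-dual identification contributes no sign), one checks that the section of $(L\otimes\sigma^*L)^{-1}$ attached to ${}^t(\sigma^*\psi)$ is precisely $\theta^\vee(\sigma^*\tilde{\psi})$, where $\theta^\vee$ is the linearization induced by $\theta$ on the dual. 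Hence ${}^t(\sigma^*\psi)=\lambda\psi$ is equivalent to $\tilde{\psi}$ lying in the $\lambda$-eigenspace of $\theta^\vee$; in particular $L$ is $\sigma$-symmetric exactly when $\tilde{\psi}$ is $\theta^\vee$-invariant.

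Next I would identify $\theta$ with the positive linearization of a pullback bundle. Via the canonical isomorphism $L\otimes\sigma^*L\cong\pi^*\text{Nm}(L)$, the claim is that $\theta$ corresponds to the positive (pullback) linearization on $\pi^*\text{Nm}(L)$. In the ramified case this is immediate from the fixed points: over $p\in R$ the swap acts on the one-dimensional space $(L\otimes\sigma^*L)_p=L_p\otimes L_p$ as $+1$, which is exactly the defining property of the positive linearization in Remark \ref{propersubspaces}, and two linearizations of an invariant line bundle that agree at a fixed point must coincide (they differ by a global scalar that squares to $1$). In the étale case, where there are no fixed points to test against, the same identification follows instead from the functoriality of the norm construction for the Galois cover $\pi$, the positive linearization being by definition the one making $\pi^*\text{Nm}(L)$ descend to $\text{Nm}(L)$.

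Finally I would read off $\lambda$ using the eigenspace description of Remark \ref{propersubspaces}. If $\text{Nm}(L)=\al O_Y$, then $(\pi^*\text{Nm}(L))^{-1}=\al O_X$ and the nowhere-vanishing $\tilde{\psi}$ is a nonzero constant, which lies in $H^0(X,\al O_X)_+\cong H^0(Y,\al O_Y)$; thus $\lambda=1$ and $L$ is $\sigma$-symmetric. If $\pi$ is étale and $\text{Nm}(L)=\Delta$, then again $(\pi^*\text{Nm}(L))^{-1}=\pi^*\Delta^{-1}\cong\al O_X$ is trivial, but now its $+$-eigenspace of sections is $H^0(X,\pi^*\Delta^{-1})_+\cong H^0(Y,\Delta^{-1})=0$, since $\Delta$ is a nontrivial $2$-torsion bundle; hence the nowhere-vanishing $\tilde{\psi}$ must lie in the $-$-eigenspace $H^0(Y,\al O_Y)$, giving $\lambda=-1$, so $L$ is $\sigma$-alternating. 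I expect the main obstacle to be the second step: pinning down that the swap linearization $\theta$ is the positive one, and in particular handling the étale case, where one cannot argue pointwise at ramification and must instead go through the norm/descent construction.
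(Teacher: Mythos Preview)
Your proof is correct and follows essentially the same route as the paper: both identify the swap/transposition linearisation on $L\otimes\sigma^*L$ with the positive linearisation on $\pi^*\text{Nm}(L)$, and then use Remark~\ref{propersubspaces} to compute the $\pm$-eigenspaces of global sections and read off $\lambda$. The only difference is presentational---you work on the dual and are more explicit about why the two linearisations agree (splitting into ramified and \'etale cases), whereas the paper simply asserts this identification and computes the relevant eigenspace directly.
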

\begin{proof}
The line bundle $L\otimes\sigma^*L$ has a canonical linearisation given by transposition. And the line bundle $\pi^*\text{Nm}(L)$ has the canonical linearisation (which we have called positive in the ramified case). These two linearisations are the same via the isomorphism $$L\otimes \sigma^*L\cong \pi^*\text{Nm}(L).$$
Assume that $\text{Nm}(L)=\al O_Y$, the isomorphism $\sigma^*L\cong L^{-1}$ is indeed a global section of $L\otimes \sigma^*L$, which is unique up to scalar multiplication. Then by Remark \ref{propersubspaces}, we have 
\begin{align*}
H^0(X,L\otimes\sigma^*L)_+&=H^0(X,\pi^*\text{Nm}(L))_+\\ &=H^0(Y,\text{Nm}(L))=\bb C.
\end{align*}
This implies that $L$ is $\sigma-$symmetric. \\
If $\pi$ is \'etale and $\text{Nm}(L)=\Delta$, then it is clear that $L$ is anti-invariant, and again by Remark \ref{propersubspaces} we have
\begin{align*}
H^0(X,L\otimes\sigma^*L)_-&=H^0(X,\pi^*\text{Nm}(L))_-\\ &=H^0(Y,\text{Nm}(L)\otimes \Delta)=\bb C.
\end{align*}
Hence $L$ is $\sigma-$alternating.
\end{proof}

\subsubsection{Infinitesimal  study} \label{infstu2}

We want to identify the tangent spaces to  $\al U_X^{\sigma,+}(r)$ at a point $E$.

 Let $E$ be a $\sigma-$symmetric anti-invariant vector bundle and  $\psi:\sigma^*E\cong E^*$, suppose that $E$ is given by the transition functions $f_{ij}=\varphi_i\circ \varphi_j^{-1}:U_{ij}\ra \text{GL}_r$, where the $\varphi_i:E_{U_i}\stackrel{\sim}{\longrightarrow} U_i\times \bb C^r$ are  local trivialisations of $E$. The covering $\{U_i\}_i$ of $X$ is chosen to be $\sigma-$invariant, i.e. $\sigma(U_i)=U_i$ (to get such covering, just pullback a covering of $Y$ that trivialises both $\pi_*\al O_X$ and $\pi_*E$ over $Y$). Note that we can choose $\{\varphi_i\}$ such that the diagram $$\xymatrix{\sigma^*\varphi_i:\sigma^*E_{U_i}\ar[d]_{\psi}\ar[r]&U_i\times\bb C^r\ar[d]^{\sigma\times I_r}\\ ^t\varphi_i^{-1}:E^*_{U_i}\ar[r]& U_i\times \bb C^r}$$ commutes. Indeed,  by taking an \emph{\'etale} neighbourhood $U$ of each point $x\in X$, such that  $\sigma(U)=U$, we can construct a frame $(e_1,\cdots,e_r)$ of $E|_U$ on which the pairing $\tilde{\psi}:E\otimes \sigma^*E\lra \al O_X$ is represented by the trivial matrix $I_r$. To construct such a frame, we apply the Gram-Schmidt process.  As in this procedure, we need to calculate some square roots, that's the reason why we have to work on the \'etale topology. Moreover, we should mention that if we start with a frame $(u_1,\cdots, u_r)$ near $x$, it may happen that $\tilde{\psi}(u_i\otimes\sigma^*u_i)_x=0$, in this case, we just replace $u_i$ with $u_i+u_j$, for some $j>i$ such that $\tilde{\psi}((u_i+u_j)\otimes\sigma^*(u_i+u_j))_x\not=0$. \\ 
Taking such trivialisations, we get transitions functions $f_{ij}$ such that  $\sigma^* f_{ij}=\,^tf_{ij}^{-1}$. We know that the extension $\sr E$ (which corresponds to some $\eta=\{\eta_{ij}\}\in H^1( X,E\otimes E^*)$) is given by transition functions of the form $$f_{ij}+\varepsilon g_{ij}:U_{ij}\ra \text{GL}_r(\bb C[\varepsilon]).$$
We want to find the relation between these transition functions and $\eta$. First of all, in order that $\{f_{ij}+\varepsilon g_{ij}\}$ represents a $1-$cocycle, we must have the two conditions
          $$\begin{cases} g_{ii}=0 \\ g_{ij}f_{ji}+f_{ij}g_{jk}f_{ki}+f_{ik}g_{ki}=0 \end{cases}.$$
Now given $\eta=\{\eta_{ij}\}\in H^1( X, E\otimes E^*)$, which verifies $\eta_{ii}=0$ and  $$\eta_{ij}+\eta_{jk}+\eta_{ki}=0.$$
Each $\eta_{ij}$ can be seen as local morphism $$\xymatrix{\eta_{ij}:E|_{U_{ij}}\ar[rr]\ar[rd]_{\varphi_j} &&E|_{U_{ij}}\ar[ld]^{\varphi_i} \\ &U_{ij}\times\bb C^r.&}$$
 
 Denote by $g_{ij}=\varphi_i\circ \eta_{ij}\circ \varphi_j^{-1}$, we can rewrite the condition on $\eta$ in the form $$\varphi_{i}^{-1}\circ g_{ij}\circ\varphi_{j}+\varphi_{j}^{-1}\circ g_{jk}\circ\varphi_{k}+\varphi_{k}^{-1}\circ g_{ki}\circ\varphi_{i}=0.$$
Composing by $\varphi_{i}$ from the left and $\varphi_{i}^{-1}$ from the right, we get 
$$g_{ij}\circ\varphi_{j}\circ\varphi_{i}^{-1}+\varphi_{i}\circ\varphi_{j}^{-1}\circ g_{jk}\circ\varphi_{k}\circ\varphi_{i}^{-1}+\varphi_{i}\circ\varphi_{k}^{-1}\circ g_{ki}=0$$
 $$\Leftrightarrow g_{ij} f_{ji}+f_{ij} g_{jk} f_{ki}+f_{ik} g_{ki}=0.$$

\begin{lemm}\label{gij} $f_{ij}+\varepsilon g_{ij}=\varphi_i\circ(id+\varepsilon\eta_{ij})\circ\varphi_j^{-1}$ are transition functions of $\sr E$.
\end{lemm}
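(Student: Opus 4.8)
The plan is to verify that $\{F_{ij}:=f_{ij}+\varepsilon g_{ij}\}$ is a $1$-cocycle that reassembles precisely the deformation $\sr E$ attached to $\eta$, relative to the chosen trivialisations $\varphi_i$. The displayed identity itself is purely formal: expanding $\varphi_i\circ(id+\varepsilon\eta_{ij})\circ\varphi_j^{-1}=\varphi_i\varphi_j^{-1}+\varepsilon\,\varphi_i\eta_{ij}\varphi_j^{-1}$ and using $f_{ij}=\varphi_i\varphi_j^{-1}$ together with the definition $g_{ij}=\varphi_i\eta_{ij}\varphi_j^{-1}$ gives $f_{ij}+\varepsilon g_{ij}$ with no work. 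All the content is in the phrase \emph{are transition functions of} $\sr E$.

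First I would record that $\{F_{ij}\}$ is a genuine $1$-cocycle valued in $\text{GL}_r(\al O_{X_\varepsilon})$: the conditions $g_{ii}=0$ and $g_{ij}f_{ji}+f_{ij}g_{jk}f_{ki}+f_{ik}g_{ki}=0$ established just above the lemma (out of $\eta_{ii}=0$ and $\eta_{ij}+\eta_{jk}+\eta_{ki}=0$) are exactly the vanishing of the $\varepsilon$-coefficients of $F_{ii}-I$ and of $F_{ij}F_{jk}F_{ki}-I$ modulo $\varepsilon^2$. Since each $F_{ij}$ reduces to the invertible $f_{ij}$ modulo $\varepsilon$, it is invertible over $\bb C[\varepsilon]$, so $\{F_{ij}\}$ defines a rank $r$ locally free $\al O_{X_\varepsilon}$-module $\sr E'$ with $\sr E'|_X\cong E$; that is, a deformation of $E$.

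The key step, and the main obstacle, is to show that $\sr E'$ is the deformation $\sr E$ determined by $\eta$, i.e. that the class of $\sr E'$ in $H^1(X,\text{End}(E))$ is $\eta$ itself. I see two routes. Intrinsically, the deformation attached to a \v{C}ech cocycle $\eta$ is obtained by regluing the trivial deformations $E|_{U_i}\otimes\bb C[\varepsilon]$ over the overlaps by the automorphisms $id+\varepsilon\eta_{ij}$, which satisfy the cocycle identity precisely because $\eta_{ij}+\eta_{jk}+\eta_{ki}=0$; transporting this regluing through $\Phi_i:=\varphi_i\otimes id$ converts $id+\varepsilon\eta_{ij}$ into $\Phi_i\circ(id+\varepsilon\eta_{ij})\circ\Phi_j^{-1}=F_{ij}$, whence $\sr E'\cong\sr E$. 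More self-containedly, I would compute the extension class of $0\to E\stackrel{\varepsilon}{\to}\sr E'\to E\to 0$ by choosing the local splittings that send a section with $\varphi_i$-coordinates $t$ to the section of $\sr E'$ with $\Phi_i$-coordinates $t+\varepsilon\cdot0$; the resulting \v{C}ech coboundary over $U_{ij}$ is multiplication by $g_{ij}f_{ij}^{-1}=\varphi_i\eta_{ij}\varphi_i^{-1}$ in the $\varphi_i$-frame, which is exactly the intrinsic endomorphism $\eta_{ij}$ of $E$, so the class is $\{\eta_{ij}\}=\eta$ (up to the global sign convention in the coboundary, which is immaterial). The only genuine care required is in this last identification: correctly matching the frame-dependent matrix $g_{ij}f_{ij}^{-1}$ with the coordinate-free endomorphism $\eta_{ij}$, and keeping the $\varepsilon$-bookkeeping consistent throughout.
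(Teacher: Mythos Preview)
Your argument is correct. Both of your routes are valid: the first identifies the standard regluing construction of the deformation attached to $\eta$ with the cocycle $F_{ij}$, and the second computes the extension class of the bundle built from $F_{ij}$ and recovers $\eta$.

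The paper's proof is the same content run in the opposite direction and made explicit. It starts from the abstract deformation $\sr E$ with its surjection $\varpi:\sr E\to E$, chooses local splittings $s_i$ of $\varpi$ (so that $\eta_{ij}=s_j-s_i$ by definition of the extension class), builds explicit trivialisations $\phi_i$ of $\sr E$ out of $\varphi_i$ and $s_i$, and then computes $\phi_i\circ\phi_j^{-1}=f_{ij}+\varepsilon\,\varphi_i(s_j-s_i)\varphi_j^{-1}=f_{ij}+\varepsilon g_{ij}$ by hand. Your second route is precisely the inverse of this computation: you pick the splittings adapted to the $\Phi_i$-frames and read off $s_j-s_i=\eta_{ij}$ from the $\varepsilon$-part of $F_{ij}$. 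The paper's version has the minor advantage of being fully self-contained (no appeal to ``the standard construction''), while your first route is cleaner conceptually; but the underlying identity $\eta_{ij}=s_j-s_i\leftrightarrow F_{ij}=f_{ij}+\varepsilon g_{ij}$ is the same in all three.
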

\begin{proof}
Let $\eta=\{\eta_{ij}\}\in H^1( X, E\otimes E^*)$, locally the extension $\sr E$ is trivial, that's $$\sr E|_{U_{i\varepsilon}}\cong E|_{U_i}\oplus \varepsilon E|_{U_i},\;\; x\mapsto (\varpi(x), x-s_i\circ\varpi(x)),$$
where $\varpi:\sr E\ra E$ and $s_i$ is a local section of $\varpi$ on the local open set $U_i$, and $U_{i\varepsilon}=U_i\times Spec(\bb C[\varepsilon])$. This isomorphism is $\al O_{ X_\varepsilon}-$linear. \\
Composing with the trivialisation $$\varphi_i+\varepsilon \varphi_i:E|_{U_i}\oplus \varepsilon E|_{U_i}\stackrel{\sim}{\longrightarrow}\al O_{U_i}\oplus \varepsilon\al O_{U_i},$$
we get a trivialisation $$\phi_i:\sr E|_{U_{i\varepsilon}}\stackrel{\sim}{\longrightarrow} \al O_{ U_{i\varepsilon}},$$
given by $$\phi_i=\varphi_i\circ\varpi+\varepsilon\varphi_i(id-s_i\circ\varpi).$$
Remark that $$\phi_i^{-1}= s_i\circ \varphi_i^{-1}-\varepsilon s_i(id-s_i\circ\varpi)s_i\circ\varphi_i^{-1}.$$
So, we calculate the transition functions of $\sr E$ 
\begin{align*}
\phi_i\circ\phi_j^{-1}&=(\varphi_i\circ\varpi+\varepsilon\varphi_i(id-s_i\circ\varpi))(s_j\circ\varphi_j^{-1}-\varepsilon s_j(id-s_j\circ\varpi)s_j\circ\varphi_j^{-1})\;\;\;\;(\text{because }\varepsilon^2=0)\\
&=f_{ij}+\varepsilon(\varphi_i(id-s_i\circ\varpi)s_j\circ\varphi_j^{-1}-\varphi_i\circ\varpi\circ s_j(id-s_j\circ\varpi)s_j\circ\varphi_j^{-1})\\
&= f_{ij} +\varepsilon(\varphi_i(s_j-s_i)\varphi_j^{-1})\\
&= f_{ij}+\varepsilon (\varphi_i\eta_{ij}\varphi_j^{-1})\\
&= f_{ij}+\varepsilon g_{ij}.
\end{align*}
\end{proof}

Now $\eta$ is in the tangent space to $\al U_X^{\sigma,+}(r)$  at $E$ if and only if the corresponding extension $\sr E$ is $\sigma-$symmetric anti-invariant vector bundle on $X_\varepsilon$, where $\sigma$ extended to an involution on $ X_\varepsilon$ by taking $\sigma(\varepsilon)=\varepsilon$.\\
On the transition functions, this means that $$\sigma^*(f_{ij}+\varepsilon g_{ij})=\,^t(f_{ij}+\varepsilon g_{ij})^{-1},$$
which gives\footnote{Recall that $(f+\varepsilon g)^{-1}=f^{-1}-\varepsilon f^{-1}gf^{-1}$ in $\text{GL}_r(\bb C[\varepsilon])$, and $\text{det}(f+\varepsilon g)=\text{det}(f)(1+\varepsilon Tr(f^{-1}g))$.} 
$$\sigma^*f_{ij}=\,^tf_{ij}^{-1},$$ and
\begin{align*}
& \;\;\sigma^*g_{ij}=-\,^tf_{ij}^{-1}\,^t g_{ij}\,^tf_{ij}^{-1} \\
\Leftrightarrow \;\;& \sigma^*\varphi_i\circ \sigma^*\eta_{ij}\circ \sigma^*\varphi_j^{-1} =-\,^t\varphi_{i}^{-1}\circ\,^t\eta_{ij}\circ \,^t\varphi_{j} \\
\Leftrightarrow \;\;& \sigma^*\varphi_i\circ \sigma^*\eta_{ij}\circ \sigma^*\varphi_j^{-1} =-\sigma^*\varphi_{i}\circ\psi^{-1}\circ\,^t\eta_{ij}\circ \psi\circ \sigma^*\varphi_{j}^{-1} \\
\Leftrightarrow \;\;&\sigma^*\eta_{ij} =-\psi^{-1}\circ\,^t\eta_{ij}\circ\psi \\
\Leftrightarrow \;\;&\sigma^*(\eta_{ij}\circ\,^t\psi^{-1}) =-\psi^{-1}\circ\,^t\eta_{ij}=-\,^t(\eta_{ij}\circ\,^t\psi^{-1}) .
\end{align*}

Thus $$\eta\circ\,^t\psi^{-1}\in H^1( X,E\otimes\sigma^*E)_-.$$
where $H^1( X,E\otimes\sigma^*E)_-$ is the proper subspace associated to the eigenvalue $-1$ of the involution of $H^1( X,E\otimes\sigma^*E)$ given by $$\xi\ra \sigma^*(\,^t\xi).$$

 Consider the case of $\al U_X^{\sigma,-}(r)$. Assume that $r$ is even and $\pi$ is ramified. Fix a point $E$ of $\al U_X^{\sigma,-}(r)$. In this case, $\tilde{\psi}$ can be represented with respect to some frame near each fixed point of $X$ by the matrix $$J_r=\begin{pmatrix}0& I_r\\-I_r& 0\end{pmatrix}.$$
Such frame gives a trivialisation $\{\varphi_i\}$  such that $$(\sigma\times J_r)\circ\sigma^*\varphi_i=\,^t\varphi_i^{-1}\circ\sigma^*\psi,$$
so the assocaited transition functions  $\{f_{ij}\}$  verify $$\sigma^* f_{ij}=-J_r\,^tf_{ij}^{-1}J_r.$$ 
It follows that the deformation $\sr E$ is in the tangent space $T_E\al U_X^{\sigma,-}(r)$ if and only if we have 
\begin{align*}
\sigma^*(f_{ij}+\varepsilon g_{ij})&=-J_r\,^t(f_{ij}+\varepsilon g_{ij})^{-1}J_r\\
                                       &=- J_r\,^tf_{ij}^{-1}J_r +\varepsilon J_r \,^tf_{ij}^{-1} \,^tg_{ij} \,^tf_{ij}^{-1} J_r .
\end{align*}
Thus 
\begin{align*}
&\sigma^*\varphi_i\circ\sigma^*\eta_{ij}\circ\sigma^*\varphi_j^{-1}=J_r\,^t\varphi_{i}^{-1}\circ \,^t\eta_{ij}\circ\,^t\varphi_jJ_r\\
&\Leftrightarrow \sigma^*\eta_{ij}=-\psi^{-1}\circ\,^t\eta_{ij}\circ\psi\\
&\Leftrightarrow \sigma^*(\eta_{ij}\circ\,^t\psi^{-1})=\,^t(\eta_{ij}\circ\,^t\psi^{-1}).
\end{align*}

Finally  $$\eta\circ\,^t\psi^{-1}\in H^1( X,E\otimes\sigma^*E)_+.$$

  We have showed so far 
 \begin{theo} With the above notations,  we have 
 \begin{enumerate}
 \item [$(a)$] The tangent space to $\al  U_X^{\sigma,+}(r)$ at a point $E$ is isomorphic to   $H^1( X,E\otimes\sigma^*E)_-$. In particular we have \begin{align*}\text{dim}(\al U_X^{\sigma,+}(r))&=\dfrac{r^2}{2}(g_X-1)+\dfrac{nr}{2}\\&=r^2(g_Y-1)+n\dfrac{r(r+1)}{2}.
\end{align*}
 \item [$(b)$] The tangent space to $\al U_X^{\sigma,-}(r)$ at a point $E$ is isomorphic to  $H^1( X, E\otimes\sigma^*E)_+$. In particular we have \begin{align*}\text{dim}(\al U_X^{\sigma,-}(r))&=\dfrac{r^2}{2}( g_X-1)-\dfrac{nr}{2} \\&=r^2(g_Y-1)+n\dfrac{r(r-1)}{2}.
\end{align*}
 \end{enumerate}
\end{theo}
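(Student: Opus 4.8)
The tangent-space isomorphisms in $(a)$ and $(b)$ are exactly what the infinitesimal computation preceding the statement establishes, so the plan is to concentrate on the two dimension formulas. Write $\gamma$ for the involution $\xi\mapsto\sigma^*({}^t\xi)$ on $V:=H^1(X,E\otimes\sigma^*E)$ and set $h^1_\pm=\dim_{\bb C} V_\pm$, so that by the tangent-space identification $\dim\al U_X^{\sigma,+}(r)=h^1_-$ and $\dim\al U_X^{\sigma,-}(r)=h^1_+$. Following the strategy of Proposition \ref{invariantdimension}, I would recover $h^1_+$ and $h^1_-$ by solving two linear equations, coming respectively from Riemann--Roch (the sum $h^1_++h^1_-$) and from the holomorphic Lefschetz fixed point theorem (the difference $h^1_+-h^1_-$).

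For the first equation, the anti-invariance isomorphism $\psi:\sigma^*E\xrightarrow{\sim}E^*$ gives $E\otimes\sigma^*E\cong E\otimes E^*=\text{End}(E)$, a bundle of degree $0$ and rank $r^2$ with $h^0=1$ by stability of $E$. Hence Riemann--Roch yields
$$h^1_++h^1_-=\dim H^1(X,\text{End}(E))=r^2(g_X-1)+1.$$

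For the second equation I would apply the holomorphic Lefschetz fixed point theorem (cf. \cite{AB}) to $\gamma$, which is induced on cohomology by the genuine sheaf isomorphism $\sigma^*(E\otimes\sigma^*E)=\sigma^*E\otimes E\xrightarrow{\sim}E\otimes\sigma^*E$ given by swapping the factors, exactly as in Proposition \ref{invariantdimension}. The fixed locus of $\sigma$ is the ramification divisor $R$, consisting of $2n$ points, and at each $p\in R$ one has $d\sigma_p=-1$, so $\det(1-d\sigma_p)=2$. The two remaining ingredients are the local and the global traces. On $H^0$, stability forces $H^0(X,E\otimes\sigma^*E)=\bb C\cdot s_0$, where $s_0$ is the canonical section corresponding to $\text{id}_E$ under the identification above; tracking the transpose and the pullback through the relation ${}^t(\sigma^*\psi)=\lambda\psi$ shows that $\gamma(s_0)=\lambda s_0$, whence $\text{Tr}(\gamma\,|\,H^0)=\lambda$, with $\lambda=+1$ in the $\sigma$-symmetric case and $\lambda=-1$ in the $\sigma$-alternating case. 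At a fixed point $p$, since $(\sigma^*E)_p=E_p$ the fibre map is simply the transposition on $E_p\otimes E_p$, whose trace equals $r$. The Lefschetz formula then reads
$$\lambda-(h^1_+-h^1_-)=\sum_{p\in R}\frac{r}{2}=nr,$$
so that $h^1_+-h^1_-=\lambda-nr$.

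Solving the two equations gives $h^1_-=\tfrac{r^2}{2}(g_X-1)+\tfrac{nr}{2}$ when $\lambda=1$ and $h^1_+=\tfrac{r^2}{2}(g_X-1)-\tfrac{nr}{2}$ when $\lambda=-1$, which are precisely $\dim\al U_X^{\sigma,+}(r)$ and $\dim\al U_X^{\sigma,-}(r)$; substituting $g_X-1=2(g_Y-1)+n$ from Hurwitz's formula produces the two expressions in terms of $g_Y$. The one genuinely delicate point, as in Proposition \ref{invariantdimension}, is the sign bookkeeping: verifying that $\gamma$ acts on the fibre $E_p\otimes E_p$ as the bare transposition (with no extra scalar from $d\sigma_p$) and, above all, that the canonical section $s_0$ is a $\lambda$-eigenvector of $\gamma$. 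This last sign is what separates the two cases, and it is the only place where the symmetric/alternating distinction enters the dimension count.
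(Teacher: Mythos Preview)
Your proposal is correct and follows essentially the same route as the paper: both obtain $h^1_++h^1_-$ from Riemann--Roch applied to $E\otimes\sigma^*E\cong\text{End}(E)$, and $h^1_+-h^1_-$ from the holomorphic Lefschetz fixed point formula, with the local trace of the transposition on $E_p\otimes E_p$ equal to $r$ (the paper computes this as $\dim\text{Sym}^2E_p-\dim\bigwedge^2E_p$) and the global $H^0$ contribution given by the sign $\lambda$. Your explicit remark that $\gamma(s_0)=\lambda s_0$ is exactly the paper's assertion that $h^0_+=1$ in the symmetric case and $h^0_-=1$ in the alternating case.
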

 \begin{proof}
 We need just to calculate the dimensions. Let $E$ be a $\sigma-$anti-invariant stable vector bundle, denote by $F=E\otimes\sigma^*E$. First we have 
\begin{equation}\label{equ1}
 h^1( X,F)=h^1_++h^1_-=r^2( g_X-1)+1,
\end{equation}  
 where we denote for simplicity $h^0_\pm=h^0( X,F)_\pm$, $h^1_\pm=h^1( X,F)_\pm$.\\
  Let $\varsigma:\sigma^*F\ra F$ be the canonical linearisation which equals to the transposition ($\sigma^*(s\otimes\sigma^* t)\lra t\otimes\sigma^* s$).
  
  Using Lefschetz fixed point formula \cite{AB}, one gets 
 $$h^1_+-h^1_-=h^0_+-h^0_--\sum_{p\in R} \dfrac{\text{Tr}(\varsigma_p)}{\text{det}(id-d_p\sigma)}.$$
 It is clear that $d_p\sigma:T_p X\ra T_p X$ is equal to $-id$ (see Lemma \ref{descentcanonical}), and the trace of the involution $\varsigma_p:F_p\ra F_p$ is equal to $$\text{dim}(F_p)_+-\text{dim}(F_p)_-.$$
 But, $F_p= E_p\otimes E_p=\text{Sym}^2E_p\oplus\bigwedge^2E_p$, and $h^0_+=1$ if $\psi$ is $\sigma-$symmetric, $h^0_-=1$ if $\psi$ is $\sigma-$alternating. Hence

\begin{eqnarray} \label{equ2}
h^1_+-h^1_-&=-\dfrac{1}{2}\left(\sum_{p\in R}\dfrac{r(r+1)}{2}-\dfrac{r(r-1)}{2}\right)+1\nonumber\\
           &=-nr+1 \hspace{1.3cm}\text{if $\psi$ is $\sigma-$symmetric}.\\
h^1_+-h^1_-&=-\dfrac{1}{2}\left(\sum_{p\in R}\dfrac{r(r+1)}{2}-\dfrac{r(r-1)}{2}\right)-1\nonumber\\
           &=-nr-1 \hspace{1.3cm}\text{if $\psi$ is $\sigma-$alternating}.\nonumber
\end{eqnarray}
 From (\ref{equ1}) and (\ref{equ2}), we deduce 
 $$h^1_-=\dfrac{r^2}{2}(g_X-1)+\dfrac{nr}{2}\;\;\;\text{ if $\psi$ est $\sigma-$symmetric.}$$
 $$h^1_+=\dfrac{r^2}{2}(g_X-1)-\dfrac{nr}{2}\;\;\;\text{ if $\psi$ est $\sigma-$alternating.}$$
 The other equalities are consequences of Hurwitz formula.
\end{proof}

Using \cite{BS} Theorem $4.1.6$, we get an identification of the moduli stack $\text{Bun}_X^{\sigma,+}(r)$ (resp.  $\text{Bun}_X^{\sigma,-}(r)$) of pairs $(E,\psi)$, where $E$ is an anti-invariant vector bundle and $\psi$ is a $\sigma-$symmetric (resp. $\sigma-$alternating) isomorphism $\sigma^*E\ra E^*$, with the moduli stack $\text{Bun}_Y(\al G)$ of $\al G-$torsors over $Y$, where $\al G=\pi_*^{\tilde{\sigma}}(\text{GL}_r)$ is the invariant direct image of the constant group scheme $\text{GL}_r$ over $X$, where $\tilde{\sigma}$ acts on $\text{GL}_r$ by $$g\ra\, ^t\sigma(g)^{-1}\;\;(\text{resp. } J_r\,^t\sigma(g)^{-1}\,J_r^{-1}, \text{ if $r$ is even}).$$
We can see, using the description of parahoric  subgroups given in \cite{BS}, that $\al G$ is not a parahoric Bruhat-Tits group scheme (in the sense given in \cite{BS}). Moreover, it is not generically constant. This gives a new example of interesting moduli spaces.

\section{The Hitchin system for anti-invariant vector bundles}
For $s\in W$, we denote by  $q:\tilde{X}_s \ra X$ the associated spectral cover of $X$, and by $\tilde{S}=Ram(\tilde{X}_s /X)$ its ramification divisor.\\
Fix the positive linearisations on $K_X$ and $\al O_X$ (see Remark \ref{propersubspaces}). Recall that this linearisation equals $id$ over the ramification points. We denote these linearisations by   $$\eta:\sigma^*K_X\ra K_X,\;\,\;\nu:\sigma^*\al O_X\ra \al O_X.$$
The linearisation $\eta$ induces an involution on the space of global sections of $K_X^i$ for each $i\geqslant1$, we define $$W^{\sigma,+}=\bigoplus_{i=1}^r H^0(X,K_X^i)_{+}.$$
\begin{prop} \label{descentram} Consider an $r-$tuple of global  sections $s=(s_1,\cdots,s_r)\in W^{\sigma,+}$  and let $\tilde{X}_s $ be the associated spectral curve over $X$. Then the involution $\sigma:X\ra X$ lifts to an involution $\tilde{\sigma}$ on $\tilde{X}_s $ and $\al O(\tilde{S})$ descends to $\tilde{Y}_s:=\tilde{X}_s /\tilde{\sigma}$. 
\end{prop}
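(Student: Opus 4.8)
The plan is to build the lift $\tilde{\sigma}$ from the positive linearisations and then read off the descent of $\al O(\tilde S)$ from the Hurwitz formula on the spectral cover, the only place where the hypothesis $s\in W^{\sigma,+}$ is really used being the construction of the lift.

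First I would construct an involution $\bar\sigma$ of the ruled surface $\bb S=\bb P(\al O_X\oplus K_X^{-1})$ lying over $\sigma$. The positive linearisations $\nu:\sigma^*\al O_X\ra\al O_X$ and $\eta:\sigma^*K_X\ra K_X$ induce an isomorphism $\sigma^*(\al O_X\oplus K_X^{-1})\stackrel{\sim}{\lra}\al O_X\oplus K_X^{-1}$, using $\nu$ on the first summand and the dual of $\eta$ on the second, hence an isomorphism $\bar\sigma:\bb S\ra\bb S$ covering $\sigma$. Since $\nu$ and $\eta$ are linearisations, i.e. $\nu\circ\sigma^*\nu=\mathrm{id}$ and $\eta\circ\sigma^*\eta=\mathrm{id}$, the induced bundle automorphism of $\al O_X\oplus K_X^{-1}$ squares to the identity, so $\bar\sigma$ is an involution.

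Next I would check that $\bar\sigma$ preserves $\tilde X_s$, and this is the step that forces $s\in W^{\sigma,+}$. Working in the total space $|K_X|\subset\bb S$ with the tautological section $\xi=x/y$ of $\bar q^*K_X$, the curve $\tilde X_s$ is cut out by $\xi^r+(\bar q^*s_1)\xi^{r-1}+\cd+\bar q^*s_r$. By construction of $\bar\sigma$ the tautological section transforms through the linearisation, $\bar\sigma^*\xi=\eta^{-1}\xi$, while each $s_i\in H^0(X,K_X^i)_+$ is by definition fixed by the involution $s\mapsto \eta^{\otimes i}(\sigma^*s)$, equivalently $\sigma^*s_i=(\eta^{-1})^{\otimes i}s_i$. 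Substituting, every monomial $s_i\xi^{r-i}$ acquires the same factor $(\eta^{-1})^{\otimes r}$, so the pulled-back defining section equals $(\eta^{-1})^{\otimes r}$ applied to the original one and vanishes on exactly the same locus. Hence $\bar\sigma(\tilde X_s)=\tilde X_s$, and restricting gives the involution $\tilde\sigma$ on $\tilde X_s$ covering $\sigma$. I expect this bookkeeping to be the main obstacle: one must confirm that $\bar\sigma^*\xi$ picks up precisely the factor $\eta^{-1}$, so that the invariance of each $s_i$ translates into preservation of the whole characteristic polynomial; it is exactly the choice of the positive linearisation (equal to $\mathrm{id}$ over $R$) that the $+$-eigenspace condition encodes. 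Note that this first assertion needs no smoothness hypothesis.

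For the descent of $\al O(\tilde S)$ I would assume, as in the intended application, that $\tilde X_s$ is smooth (the generic case, by the analogue of Lemma \ref{smoothlemma}), so that $\tilde Y_s=\tilde X_s/\tilde\sigma$ is again a smooth curve and $\tilde\pi:\tilde X_s\ra\tilde Y_s$ is a double cover. Since $q\circ\tilde\sigma=\sigma\circ q$ and $\sigma$ is an isomorphism, $\tilde\sigma$ permutes the ramification points of $q$, so $\tilde S$ is $\tilde\sigma$-invariant. To show it descends I would use the Hurwitz relation $K_{\tilde X_s}=q^*K_X(\tilde S)$, i.e.
$$\al O(\tilde S)\cong K_{\tilde X_s}\otimes q^*K_X^{-1},$$
and prove both factors descend to $\tilde Y_s$. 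On one hand, the argument of Lemma \ref{descentcanonical} applied to the pair $(\tilde X_s,\tilde\sigma)$ shows $K_{\tilde X_s}$ descends. On the other hand, letting $\tilde q:\tilde Y_s\ra Y$ be the map induced by $q$, the square commutes, $\pi\circ q=\tilde q\circ\tilde\pi$, so from $K_X=\pi^*(K_Y\otimes\Delta)$ we get $q^*K_X=\tilde\pi^*\tilde q^*(K_Y\otimes\Delta)$, which is pulled back along $\tilde\pi$ and hence descends. Therefore $\al O(\tilde S)$ is a ratio of two bundles pulled back from $\tilde Y_s$, so it too is pulled back from $\tilde Y_s$, i.e. it descends.

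As a cross-check consistent with Kempf's Lemma \ref{kempf} — useful if one wishes to avoid the smoothness reduction above — one verifies directly that the fixed points of $\tilde\sigma$ all lie over $R$, where $\eta_p=\mathrm{id}$, that $\tilde\sigma$ acts as $-1$ on each tangent line there, and that the natural linearisation of $\al O(\tilde S)$ is accordingly $+\,\mathrm{id}$ at every fixed point, whence descent. I would include this computation only as a remark, since the Hurwitz route already gives the result cleanly in the relevant smooth case.
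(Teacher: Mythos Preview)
Your proposal is correct and follows essentially the same route as the paper: build $\bar\sigma$ on $\bb S$ from the positive linearisations $\nu,\eta$, use $s_i\in H^0(K_X^i)_+$ to see that the defining section of $\tilde X_s$ is $\bar\sigma$-invariant, and deduce descent of $\al O(\tilde S)$ from the Hurwitz identity $\al O(\tilde S)=K_{\tilde X_s}\otimes q^*K_X^{-1}$ together with Lemma~\ref{descentcanonical} on both factors. The only addition in the paper is that it carries the computation one step further to identify the descended bundle explicitly as $\al O(S)$ with $S=\mathrm{Ram}(\tilde Y_s/Y)$, a fact used later in the definition of $\al P^+$; you may want to record this.
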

\begin{proof} 
We have an isomorphism  $$\al O_X\oplus K_X^{-1}\stackrel{\,^t\nu\otimes\,^t\eta}{\xrightarrow{\hspace*{1cm}}}\sigma^*(\al O_X\oplus K_X^{-1}),$$ which induces an involution $\bar{\sigma}$ on $\bb S=\bb P(\al O_X\oplus K_X^{-1})$. Let $\tilde{X}_s \subset\bb P(\al O_X\oplus K_X^{-1})$ be the spectral curve associated to $s$. \\
Recall that  $y$ is defined to be the identity section of $\bar q_*\al O(1)\cong \al O_X\oplus K_X^{-1}$, therefore it is $\bar{\sigma}-$invariant. The section $x$ is by definition the canonical section of $\bar q^*K_X\otimes \al O(1)$. In fact it can be seen as the canonical section of $\bar q^*K_X\ra |K_X|$, where $|K_X|$ is the total space of $K_X$. Hence $x$ is invariant with respect to the positive linearisation.\\
As by definition $\eta^{\otimes k}(\sigma^*(s_k))=s_k$,  we deduce that $$\bar{\sigma}((\bar q^*s_k)y^kx^{r-k})=(\bar q^*s_k)y^kx^{r-k}.$$ Thus the section defining $\tilde{X}_s $ 
\begin{equation*} x^r+(\bar q^*s_1)yx^{r-1}+\cdots+ (\bar q^*s_r)y^r \in H^0(\bb S,\bar{q}^*K_X^r\otimes \al O(r))\end{equation*} 
 is  $\bar{\sigma}-$invariant. 
Hence $\bar\sigma(\tilde{X}_s )=\tilde{X}_s $, so $\bar{\sigma}$ induces an involution on $\tilde{X}_s $ which we denote by $\tilde{\sigma}$. \\
Remark that $\bar\sigma$ acts trivially on the fibers of $\bar q:\bb S\ra X$ over the ramification points of $\pi:X\ra Y$. Thus the ramification locus of $\tilde{\sigma}$ is $q^{-1}(R)$.\\

By Hurwitz formula we have $\al O(\tilde{S})=K_{\tilde{X}_s }\otimes q^*K_X^{-1}$. We know by Lemma \ref{descentcanonical} that $K_{\tilde{X}_s }$ (resp. $K_X$)  descends to $\tilde{Y}_s$ (resp. $Y$). Moreover,  $K_{\tilde{X}_s }=\tilde{\pi}^*K_{\tilde{Y}}(\tilde{R})$ (resp. $K_X=\pi^* K_Y(R)$), where  $\tilde{R}=Ram(\tilde{X}_s/\tilde{Y}_s)$, and we have used the notation of the commutative diagram 
$$\xymatrix{\tilde{X}_s \ar[r]^q \ar[d]_{\tilde{\pi}} & X \ar[d]^\pi \\ \tilde{Y}_s \ar[r]^{\tilde{q}} &  Y,}$$
since $\al O(\tilde{R})=q^*\al O(R)$, it follows 
\begin{align*}
\al O(\tilde{S})&=K_{\tilde{X}_s }\otimes q^*K_X^{-1}\\
                &=\tilde{\pi}^*K_{\tilde{Y}_s}\otimes q^*(\pi^*K_Y^{-1})\otimes\al O(\tilde{R})\otimes q^*\al O(-R) \\
                &= \tilde{\pi}^*\left(K_{\tilde{Y}_s}\otimes\tilde{q}^*K_Y^{-1}\right).
\end{align*} 
In particular, by Hurwitz formula,  $K_{\tilde{Y}}\otimes\tilde{q}^*K_Y^{-1}=\al O(S)$, where $S=Ram(\tilde{Y}_s/Y)$, hence $\al O(\tilde{S})=\tilde{\pi}^*\al O(S)$.
\end{proof}

We keep the notations of the last proposition hereafter.

\begin{rema}\label{Ytildespectral} Remark that for $s\in W^{\sigma,+}$,  $\tilde{Y}_s$ is a spectral cover of $Y$ associated to the line bundle $L=K_Y\otimes \Delta$, because the sections $s_i$ descend to $Y$.
\end{rema}

\begin{lemm} \label{serre}
Let $F$ be a $\sigma-$linearised vector bundle, and consider the positive linearisation on $K_X$. Then the Serre duality isomorphism  $$H^1(X,F^*)\stackrel{\sim}{\longrightarrow}H^0(X, F\otimes K_X)^*$$ is anti-equivariant with respect to the induced involutions on the two spaces.
\end{lemm}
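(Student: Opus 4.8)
The plan is to reduce the statement to the behaviour of the trace map $\mathrm{tr}:H^1(X,K_X)\xrightarrow{\sim}\bb C$ under the involution coming from the positive linearisation, where exactly one sign will appear. First I would make the two involutions explicit. Write $\varphi:\sigma^*F\lra F$ for the linearisation of $F$; it induces the dual linearisation $\check\varphi={}^t\varphi^{-1}:\sigma^*F^*\lra F^*$, hence an involution $\iota_1$ on $H^1(X,F^*)$ by $\iota_1(\xi)=\check\varphi_*(\sigma^*\xi)$. Likewise $\varphi\otimes\eta$ is a linearisation of $F\otimes K_X$ and gives an involution $\iota_2'$ on $H^0(X,F\otimes K_X)$, $\iota_2'(s)=(\varphi\otimes\eta)_*(\sigma^*s)$, whose transpose is the induced involution $\iota_2$ on the dual space. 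Writing $SD:H^1(X,F^*)\xrightarrow{\sim}H^0(X,F\otimes K_X)^*$ for the Serre duality isomorphism and unwinding the definitions, the claimed anti-equivariance $SD\circ\iota_1=-\iota_2\circ SD$ is equivalent to the numerical identity
$$\langle \iota_1\xi,\,\iota_2's\rangle=-\langle\xi,s\rangle \qquad \text{for all }\xi,s,$$
where $\langle\xi,s\rangle=\mathrm{tr}\big(\mathrm{ev}(\xi\cup s)\big)$ is the Serre pairing, $\cup$ is the cup product into $H^1(X,F^*\otimes F\otimes K_X)$, $\mathrm{ev}:F^*\otimes F\otimes K_X\to K_X$ is the contraction, and $\mathrm{tr}$ is the canonical trace.

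Next I would compute the left-hand side by naturality. Since cup product commutes with pullback, $\sigma^*(\xi\cup s)=\sigma^*\xi\cup\sigma^*s$, and the product of the three linearisations on $F^*\otimes F\otimes K_X$ is $\check\varphi\otimes\varphi\otimes\eta$; hence $\iota_1\xi\cup\iota_2's=(\check\varphi\otimes\varphi\otimes\eta)_*\sigma^*(\xi\cup s)$. The contraction is compatible with the dual and original linearisations in the sharpest possible way, $(\check\varphi\lambda)(\varphi v)=\lambda(v)$, so that $\mathrm{ev}\circ(\check\varphi\otimes\varphi\otimes\eta)=\eta\circ\sigma^*(\mathrm{ev})$ with no scalar. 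Setting $\alpha=\mathrm{ev}(\xi\cup s)\in H^1(X,K_X)$, this gives $\mathrm{ev}(\iota_1\xi\cup\iota_2's)=\eta_*\sigma^*\alpha$, and everything is reduced to evaluating $\mathrm{tr}(\eta_*\sigma^*\alpha)$.

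The hard and decisive step is the sign. The trace map is $\sigma$-invariant when one uses the canonical identification $c_\sigma:\sigma^*K_X\xrightarrow{\sim}K_X$ coming from the differential (pullback of forms): concretely $\mathrm{tr}\big((c_\sigma)_*\sigma^*\alpha\big)=\mathrm{tr}(\alpha)$, because a holomorphic automorphism preserves orientation and $\int_X\sigma^*(-)=\int_X(-)$. Now $c_\sigma$ and the positive linearisation $\eta$ are the two linearisations of the line bundle $K_X$, so they differ by a sign; and by Lemma \ref{descentcanonical} one has $c_\sigma=-1$ at each ramification point (since $\sigma^*dt=-dt$ there), whereas $\eta=+1$ there by definition of the positive linearisation. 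Therefore $\eta=-c_\sigma$, whence $\mathrm{tr}(\eta_*\sigma^*\alpha)=-\mathrm{tr}(\alpha)=-\langle\xi,s\rangle$, which is exactly the required identity. I expect this comparison $\eta=-c_\sigma$ to be the only subtle point: the cup product and contraction compatibilities are sign-free, and the entire \emph{anti} in the statement originates from the discrepancy between the geometric linearisation $d\sigma$ and the positive one along the ramification divisor.
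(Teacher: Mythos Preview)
Your argument is correct and follows the same overall route as the paper: both reduce the question to showing that the involution on $H^1(X,K_X)$ induced by the positive linearisation $\eta$ acts as $-1$, after observing that the cup product and the contraction $F^*\otimes F\otimes K_X\to K_X$ are equivariant. The only difference lies in how that sign is established. The paper computes it algebraically via Remark \ref{propersubspaces}: with $K_X=\pi^*(K_Y\otimes\Delta)$ and the positive linearisation, one has $H^1(X,K_X)_-\cong H^1(Y,K_Y)=\bb C$, so $H^1(X,K_X)=H^1(X,K_X)_-$. You instead identify the trace with integration and compare $\eta$ with the differential linearisation $c_\sigma=d\sigma$, obtaining $\eta=-c_\sigma$ from their values at ramification points; the invariance of $\int_X$ under pullback by an orientation-preserving automorphism then gives the sign. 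Your approach is perhaps more geometric, while the paper's stays purely within the $\pi_*$-decomposition framework used elsewhere.

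One small caveat: as written, your comparison $\eta=-c_\sigma$ is checked at ramification points, so it literally treats only the ramified case. In the \'etale case the paper \emph{defines} the positive linearisation on $K_X$ to be the one associated with $K_Y\otimes\Delta$ (see Remark \ref{propersubspaces}); since $c_\sigma$ is the descent linearisation associated with $K_Y$, the relation $\eta=-c_\sigma$ still holds and your argument goes through unchanged.
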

\begin{proof}
 If $F$ is a $\sigma-$linearised vector bundle, we have an equivariant perfect pairing: $$H^0(X,F)\otimes H^1(X,F^*\otimes K_X)\ra H^1(X,K_X)\stackrel{\sim}{\longrightarrow}\bb C.$$
As the fixed linearisation is the positive one, it follows by Remark \ref{propersubspaces} that 
\begin{align*}
H^1(X,K_X)_-&=H^1(X,\pi^*(K_Y\otimes\Delta))_+ \\ &= H^1(Y,K_Y\otimes \Delta\otimes \Delta^{-1}) \\&= H^1(Y,K_Y)=\bb C.
\end{align*}
So $$H^1(X,K_X)=H^1(X,K_X)_-. $$
Since the above pairing is equivariant, we get the result.
\end{proof}
As a direct consequence, one gets an isomorphism
\begin{align*}
 T^*\al U_X^{\sigma,+}(r)\stackrel{\sim}{\lra} H^0(X,E\otimes \sigma^*E\otimes K_X)_+,\\
 T^*\al U_X^{\sigma,-}(r)\stackrel{\sim}{\lra} H^0(X,E\otimes \sigma^*E\otimes K_X)_-.
\end{align*}

We denote by $\sr H_i$ the $i^{th}$ component of the Hitchin map $$\sr H_E:H^0(X,E\otimes \sigma^* E\otimes K_X)\ra W.$$
\begin{prop}  \label{Hitchinmapequivarience}Let $E$ be $\sigma-$anti-invariant stable vector bundle, and let $\psi:\sigma^*E\cong E^*$ be an isomorphism.
\begin{enumerate}
\item If $\psi$ is $\sigma-$symmetric, then $\sr H_i$ induces a map $$\sr H_i:H^0(X,E\otimes \sigma^*E\otimes K_X)_+\ra H^0(X,K_X^i)_{+}.$$
\item If $\psi$ is $\sigma-$alternating, then $\sr H_i$ induces a map $$\sr H_i:H^0(X, E\otimes \sigma^*E\otimes K_X)_-\ra H^0(X,K_X^i)_{+}.$$
\end{enumerate}
\end{prop}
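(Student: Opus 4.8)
The plan is to prove both statements simultaneously from a single equivariance identity for $\sr H_i$, carrying one sign $\lambda=\pm1$ that distinguishes the symmetric ($\lambda=1$) and alternating ($\lambda=-1$) cases. Recall that an element $\phi\in H^0(X,E\otimes\sigma^*E\otimes K_X)$ is turned into a Higgs field $\Phi:=(\mathrm{id}_E\otimes\psi)\phi\in H^0(X,\text{End}(E)\otimes K_X)$ by means of $\psi:\sigma^*E\stackrel{\sim}{\ra}E^*$, and that $\sr H_i(\phi)=(-1)^i\text{Tr}(\bigwedge^i\Phi)$. The domain $H^0(X,E\otimes\sigma^*E\otimes K_X)$ carries the involution $\theta(\phi)=(\varsigma\otimes\eta)(\sigma^*\phi)$, where $\varsigma$ is the transposition linearisation of $E\otimes\sigma^*E$ used in Lemma \ref{serre} and $\eta$ is the positive linearisation on $K_X$; its $+1$ (resp. $-1$) eigenspace is $H^0(X,E\otimes\sigma^*E\otimes K_X)_+$ (resp. $_-$). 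The target $H^0(X,K_X^i)$ carries the involution $\rho_i=\eta^{\otimes i}\circ\sigma^*$, whose $+1$ eigenspace is by definition $H^0(X,K_X^i)_+$. Thus the statement reduces to locating $\sr H_i(\theta(\phi))$ relative to $\rho_i(\sr H_i(\phi))$.

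The key step is the equivariance identity
\begin{equation*}
\sr H_i(\theta(\phi))=\lambda^i\,\rho_i\bigl(\sr H_i(\phi)\bigr).
\end{equation*}
To obtain it I would compute the Higgs field $\Phi^\vee:=(\mathrm{id}_E\otimes\psi)(\theta(\phi))$ attached to $\theta(\phi)$. Writing $\Phi=\phi\cdot\psi$ in terms of local matrices and using that $\theta$ transposes the matrix of $\phi$ while applying $\eta\sigma^*$ to its $K_X$-entries, one gets $\Phi^\vee=\psi^{-1}\circ\eta\sigma^*(\,^t\Phi)\circ\psi$ up to the scalar coming from the symmetry $\sigma^*\psi=\lambda\,^t\psi$; more precisely, transposing this relation gives $\sigma^*(\,^t\psi)=\lambda\psi$, which forces $\Phi^\vee=\lambda\,\psi^{-1}\circ\eta\sigma^*(\,^t\Phi)\circ\psi$. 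Since the characteristic polynomial is invariant under conjugation by $\psi$ and under transposition, and since $\text{Tr}(\bigwedge^i(\lambda M))=\lambda^i\text{Tr}(\bigwedge^i M)$ while $\text{Tr}(\bigwedge^i\eta\sigma^*N)=\eta^{\otimes i}\sigma^*\text{Tr}(\bigwedge^i N)$, applying $(-1)^i\text{Tr}(\bigwedge^i-)$ yields the displayed identity.

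Granting this, both cases follow formally, and the homogeneity relation $\sr H_i(-\phi)=(-1)^i\sr H_i(\phi)$ is what reconciles the signs. In the symmetric case $\lambda=1$: if $\phi\in H^0(\cdots)_+$ then $\theta(\phi)=\phi$, so $\sr H_i(\phi)=\rho_i(\sr H_i(\phi))$, i.e. $\sr H_i(\phi)\in H^0(X,K_X^i)_+$. In the alternating case $\lambda=-1$: if $\phi\in H^0(\cdots)_-$ then $\theta(\phi)=-\phi$, whence $(-1)^i\sr H_i(\phi)=\sr H_i(-\phi)=\sr H_i(\theta(\phi))=(-1)^i\rho_i(\sr H_i(\phi))$, and cancelling $(-1)^i$ again gives $\sr H_i(\phi)\in H^0(X,K_X^i)_+$. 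The main obstacle is purely the bookkeeping in the key step: getting the order-reversal of transposition, the placement of $\eta$ on the canonical factor, and the single factor $\lambda$ from $\sigma^*\psi=\lambda\,^t\psi$ all correct, so that the scalar is exactly $\lambda^i$ rather than, say, $\lambda$ or $\lambda^i(-1)^i$. Once the exponent of $\lambda$ is pinned down, the two eigenspace computations are immediate.
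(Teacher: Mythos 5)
Your proof is correct, and its derivation of the key identity takes a genuinely different route from the paper's. Both arguments reduce the proposition to the single equivariance statement $\sr H_i(f(\phi))=\lambda^i\,\eta^{\otimes i}(\sigma^*(\sr H_i(\phi)))$ and both close the alternating case with the same homogeneity trick $\sr H_i(-\phi)=(-1)^i\sr H_i(\phi)$; but where you prove the identity by a structural argument --- showing the Higgs field of $f(\phi)$ is $\lambda$ times a conjugate of $\eta\sigma^*(\,^t\Phi)$ and then invoking invariance of $\text{Tr}(\bigwedge^i(-))$ under conjugation and transposition --- the paper instead expands $\phi$ locally in decomposable tensors $\sum_k s_k\otimes\sigma^*(t_k)\otimes\alpha_k$, computes $\bigwedge^i\phi$ explicitly as a sum over increasing multi-indices with determinantal coefficients $\text{det}\left(\gen{\psi(\sigma^*(t_{k_j})),x_l}\right)$, and applies the relation $\gen{\psi(\sigma^*(t)),s}=\lambda\,\nu(\sigma^*\gen{\psi(\sigma^*(s)),t})$ termwise. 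Your route is shorter and less error-prone, since the multilinear bookkeeping is absorbed into the functoriality of the characteristic polynomial; the paper's expansion buys an explicit local formula for $\bigwedge^i\phi$ that it reuses elsewhere. Two small points of hygiene: in your conjugation formula $\Phi^\vee=\lambda\,\psi^{-1}\circ\eta\sigma^*(\,^t\Phi)\circ\psi$ the conjugating map should strictly be $\sigma^*\psi$ (or equivalently $\,^t\psi$, by $\sigma^*\psi=\lambda\,^t\psi$) for the types $\sigma^*E\ra E^*$ versus $\sigma^*E^*$ to match --- immaterial for the conclusion, exactly because conjugation by any such isomorphism preserves $\text{Tr}(\bigwedge^i(-))$, and because the scalar ambiguity $\pm\lambda$ in the conjugator cancels; and note that the conjugation relation you establish is essentially the lemma the paper itself proves later, in subsection \ref{ramifiedalterne}, where the commuting square $^t\phi=(\psi\otimes\eta)\circ(\sigma^*\phi)\circ\psi^{-1}$ is verified for the $\sigma$-alternating case --- so your proof in effect promotes that later lemma to the engine of Proposition \ref{Hitchinmapequivarience}, with the correct exponent $\lambda^i$ emerging from the single factor $\lambda$ under $\bigwedge^i$, exactly as you pinned down.
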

\begin{proof}Let $\ak t$ be the canonical linearisation on $E\otimes \sigma^*E$ given by the transposition, then the linearisation $\ak t\otimes \eta$ on $E\otimes \sigma^*E\otimes K_X$ induces an involution on $H^0(X,E\otimes \sigma^*E\otimes K_X)$ which we denote by $f$.\\
Let $\phi\in H^0(X,E\otimes \sigma^*E\otimes K_X)$, locally we can write $\phi=\sum_ks_k\otimes \sigma^*(t_k)\otimes \alpha_k$, where $\alpha_k$ (resp. $s_k$, $t_k$) are local sections of $K_X$ (resp. $E$). We can see the section $\phi$ as a map $E\ra E\otimes K_X$ which is defined locally by $$x\lra \phi(x)=\sum_k\gen{\psi(\sigma^*(t_k)),x}s_k\otimes \alpha_k.$$
Thus $\bigwedge^i\phi$ is defined locally by 
\begin{align*}
\bigwedge^i\phi(x_1\wedge\cdots\wedge x_i)&=i!\phi(x_1)\wedge\cdots\wedge\phi(x_i)\\
                                          &=i!\left(\sum_{k_1}\gen{\psi(\sigma^*(t_{k_1})),x_1} s_{k_1}\otimes\alpha_{k_1}\right)\wedge\cdots\wedge\left(\sum_{k_i}\gen{\psi(\sigma^*(t_{k_i})),x_i}s_{k_i}\otimes\alpha_{k_i} \right)\\
                                          &=i!\sum_{k_1,\dots,k_i} \gen{\psi(\sigma^*(t_{k_1})),x_1}\cdots\gen{\psi(\sigma^*(t_{k_i})),x_i}s_{k_1}\wedge\cdots\wedge s_{k_i} \otimes \left(\bigotimes_{j=1}^i\alpha_{k_j}\right)\\
                                          &=i!\sum_{k_1<\dots<k_i} \text{det}\left(\gen{\psi(\sigma^*(t_{k_j})),x_l}\right)_{j,l}s_{k_1}\wedge\cdots\wedge s_{k_i}\otimes \left( \bigotimes_{j=1}^i\alpha_{k_j}\right)\\
                                          &=i!\sum_{k_1<\dots<k_i} \gen{(\bigwedge^i\psi)(\sigma^*(t_{k_1})\wedge\cdots\wedge\sigma^*(t_{k_i})),x_1\wedge\cdots\wedge x_i} s_{k_1}\wedge\cdots\wedge s_{k_i}\otimes\left(\bigotimes_{j=1}^i\alpha_{k_j}\right).
\end{align*}
For the last equality, we use the canonical isomorphism $\bigwedge^k E^*\cong(\bigwedge^kE)^*$ given by the determinant. It follows that (locally) we have $$\bigwedge^i\phi=i!\sum_{k_1<\dots<k_i}s_{k_1}\wedge\cdots\wedge s_{k_i}\otimes\sigma^*(t_{k_1})\wedge\cdots\wedge\sigma^*(t_{k_i})\otimes \bigotimes_{j=1}^i\alpha_{k_j}.$$
\begin{enumerate}
\item Suppose that $\psi$ is $\sigma-$symmetric, thus for any local section  $s$ and $t$ of $E$, one has  $$\gen{\psi(\sigma^*(t)),s}=\nu(\sigma^*\gen{\psi(\sigma^*(s)),t}).$$ 
Hence 
\begin{align*}
\sr H_i(f(\phi))&=(-1)^i \text{Tr}(\,^t(\sigma^*(\bigwedge^i\phi)))\\
                     &=(-1)^i i!\sum_{k_1<\dots<k_i} \gen{\bigwedge^i\psi(\sigma^*(s_{k_1})\wedge\cdots\wedge\sigma^*(s_{k_i})),t_{k_1}\wedge\cdots\wedge t_{k_i}}\bigotimes_{j=1}^i\eta(\sigma^*(\alpha_{k_j}))\\
                     &=(-1)^i i!\sum_{k_1<\dots<k_i} \text{det}\left(\gen{\psi(\sigma^*(s_{k_l})),t_{k_{l'}}}\right)_{1\leqslant l,l'\leqslant i}\bigotimes_{j=1}^i\eta(\sigma^*(\alpha_{k_j}))\\
                     &=(-1)^i i!\sum_{k_1<\dots<k_i} \nu\left(\sigma^*\gen{\bigwedge^i\psi(\sigma^*(t_{k_1})\wedge\cdots\wedge\sigma^*(t_{k_i})),s_{k_1}\wedge\cdots\wedge s_{k_i}}\right)\bigotimes_{j=1}^i\eta(\sigma^*(\alpha_{k_j}))\\
                     &=\eta^{\otimes i}(\sigma^*(\sr H_i(\phi))).
\end{align*}  
Thus, if $f(\phi)=\phi$, then $ \eta^{\otimes i}(\sigma^*(\sr H_i(\phi)))=\sr H_i(\phi)$.
\item If $\psi$ is $\sigma-$alternating, thus $$\gen{\psi(\sigma^*(t)),s}=-\nu(\sigma^*\gen{\psi(\sigma^*(s)),t}),$$ 
by the above calculation, it follows that   
$$\sr H_i(f(\phi))=(-1)^i\eta^{\otimes i}(\sigma^*(\sr H_i(\phi))).$$
On the other hand, it is  clear that $$\sr H_i(-\phi)=(-1)^i\sr H_i(\phi),$$ 
so if  $f(\phi)=-\phi$, then $$\eta^{\otimes i}(\sigma^*(\sr H_i(\phi)))=\sr H_i(\phi).$$
\end{enumerate}
\end{proof}

We claim that $\text{dim}(W^{\sigma,+} )=\text{dim}(\al U_X^{\sigma,+}(r))$. Indeed we have 
\begin{align*}
H^0(X,K_X^i)&\cong H^0(Y,\pi_*K_X^i)\\
     & = H^0(Y,K_Y^i\otimes \Delta^i)\oplus H^0(Y,K_Y\otimes \Delta^{i-1}). 
\end{align*}
As the fixed linearisation on $K_X$ is the positive one, by Remark \ref{propersubspaces}, we obtain 
 $$H^0(X,K_X^i)_+\cong H^0(Y,K_Y^i\otimes \Delta^i),$$
hence $$h^0(X,K_X^i)_+=(2i-1)(g_Y-1)+in,$$ it follows that 
\begin{align*}
\text{dim}(W^{\sigma,+} )&=\sum_{i=1}^r(2i-1)(g_Y-1)+in \\ 
&=r^2(g_Y-1)+\dfrac{r(r+1)}{2}n.
\end{align*}
\begin{rema}
 One can use Lefschetz fixed point theorem (\cite{AB}) to calculate the dimension of $W^{\sigma,+}$.
\end{rema}

To study the irreducibility of $\al U_X^{\sigma,+}(r)$ and $\al U_X^{\sigma,-}(r)$, we will use the notion of \emph{very stable} vector bundles, which has been introduced in \cite{LG}, but we focus just on stable vector bundles. Let $E$ be a stable  vector bundle, and let $\phi:E\ra E\otimes K_X$ be a Higgs field. We say that $\phi$ is \emph{nilpotent} if the composition of the maps $$E\stackrel{\phi}{\lra}E\otimes K_X\stackrel{\phi\otimes id}{\lra}E\otimes K_X^2\ra\cdots\ra E\otimes K_X^{r-1}\stackrel{\phi\otimes id}{\lra}E\otimes K_X^r$$ vanishes. 
\begin{defi} We say that a vector bundle $E$ is \emph{very stable} if $E$ has no nilpotent Higgs field other than $0$. 
\end{defi}
If $E$ is a very stable vector bundle, then the  Hitchin morphism $$\sr H_E:H^0(X,E\otimes E^*\otimes K_X)\ra W$$ is dominant. Indeed, by the very definition, $\sr H_E^{-1}(0)=\{0\}$, but the two spaces have the same dimension, this implies that $\sr H_E$ is dominant.

One of the main results of \cite{LG} is that the locus of very stable vector bundles is an open dense  subscheme of the moduli space of vector bundles.

\begin{defi} We say that a $\sigma-$symmetric (resp. $\sigma-$alternating) anti-invariant vector bundle $E$ is \emph{very stable} if $E$ has no nilpotent Higgs field $$\phi\in H^0(E\otimes \sigma^*E\otimes K_X)_+ \;\;(\text{ resp. } \phi\in H^0(E\otimes \sigma^*E\otimes K_X)_-)$$  other than $0$. 
\end{defi}

Let $T^*\al U_X(r,0)$ be the cotangent bundle of  $\al U_X(r,0)$. This bundle is invariant with respect to the involution $E\ra \sigma^*E^*$ on $\al U_X(r,0)$. In fact, this is true more generally for any variety $Z$ with an involution $\tau$. To see this consider the differential of $\tau$, it gives a linear isomorphism $$d\tau:TZ\lra \tau^*TZ,$$ 
but $\tau^2=id_Z$, this implies that $d\tau\circ\tau^*d\tau=id$. Thus $d\tau$ is a linearisation on $TZ$, hence $d\tau^t$ is a linearisation on $T^*Z$.

In particular, in our case, the involution $E\ra \sigma^*E^*$ of  $\al U_X(r,0)$ lifts to an involution on $T^*\al U_X(r,0)$. If we identify $T^*_E\al U_X(r,0)\cong H^0(X,E\otimes \sigma^*E\otimes K_X)$ using $\psi:\sigma^*E\cong E^*$ and Serre duality, then this lifting is the involution $f$ on $H^0(X,E\otimes\sigma^*E\otimes K_X)$ given by $f(\phi)=\,^t(\sigma^*\phi)$ in the $\sigma-$symmetric case, and $f(\phi)=-\,^t(\sigma^*\phi)$ in the $\sigma-$alternating case.\\  Moreover, by \S \ref{infstu2}, the fixed locus of this involution is  the cotangent bundle $T^*\al U_X^{\sigma,+}(r)$ (resp. $T^*\al U_X^{\sigma,-}(r)$). Hence we can consider both $T^*\al U_X^{\sigma,\pm}(r)$ as closed subspaces of $T^*\al U_X(r,0)$. Moreover, the tautological symplectic form on $T^*\al U_X(r,0)$ restricts to the tautological forms on $T^*\al U_X^{\sigma, \pm}(r)$.

Following the notations of \cite{LG}, let $\Lambda_{X,r}\subset T^*\al U_X(r,0)$ the nilpotent cone, that's the set of $(E,\phi)$ with $\phi$ nilpotent Higgs field. Set $$\Lambda_{X,r}^{\sigma,+}=\Lambda_{X,r}\cap T^*\al U_X^{\sigma,+}(r)\,,\;\Lambda_{X,r}^{\sigma,-}=\Lambda_{X,r}\cap T^*\al U_X^{\sigma,-}(r).$$

\begin{theo}\label{denseverystable}
The nilpotent cone $\Lambda_{X,r}^{\sigma,+}$ (resp. $\Lambda_{X,r}^{\sigma,-}$) is Lagrangian in $T^*\al U_X^{\sigma,+}(r)$ (resp. $T^*\al U_X^{\sigma,-}(r)$). In particular the locus of very stable anti-invariant vector bundles is dense in $\al U_X^{\sigma,+}(r)$ (resp. $\al U_X^{\sigma,-}(r)$)
\end{theo}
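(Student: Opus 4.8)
The plan is to transfer Laumon's theorem that the full nilpotent cone $\Lambda_{X,r}$ is Lagrangian in $T^*\al U_X(r,0)$ down to the fixed locus of the involution $f$. The first point to record is that $f$ preserves $\Lambda_{X,r}$. Since a Higgs field is nilpotent precisely when $\sr H_E(\phi)=0$ (by the Cayley--Hamilton theorem, the $r$-fold composition vanishes iff the characteristic coefficients do), we have $\Lambda_{X,r}=\sr H^{-1}(0)$; and by the equivariance of Proposition \ref{Hitchinmapequivarience} the involution $f$ covers an involution of $W$ fixing $0$, so $f(\Lambda_{X,r})=\Lambda_{X,r}$. Hence $\Lambda_{X,r}^{\sigma,+}=\Lambda_{X,r}\cap T^*\al U_X^{\sigma,+}(r)$ is a union of components of the fixed locus of $f|_{\Lambda_{X,r}}$.

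Next I would establish that $\Lambda_{X,r}^{\sigma,+}$ is isotropic. As $f$ is the cotangent lift of the base involution $E\mapsto\sigma^*E^*$, it preserves the tautological $1$-form and hence the symplectic form $\omega$; thus $T^*\al U_X^{\sigma,+}(r)$ is a symplectic submanifold whose form is $\omega|_{T^*\al U_X^{\sigma,+}(r)}$, as already noted in the excerpt. At a smooth point of $\Lambda_{X,r}^{\sigma,+}$ the tangent space lies inside that of the Lagrangian $\Lambda_{X,r}$, on which $\omega$ vanishes; therefore $\omega$ vanishes on $\Lambda_{X,r}^{\sigma,+}$, which is accordingly isotropic, so each of its components has dimension at most $\dim\al U_X^{\sigma,+}(r)$.

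For the reverse bound I would use the zero section: $(E,0)\in\Lambda_{X,r}^{\sigma,+}$ for every $\sigma$-symmetric $E$, so the projection $p\colon\Lambda_{X,r}^{\sigma,+}\to\al U_X^{\sigma,+}(r)$ is surjective and $\Lambda_{X,r}^{\sigma,+}$ contains a component of dimension $\dim\al U_X^{\sigma,+}(r)$. Since $T^*\al U_X^{\sigma,+}(r)$ has twice this dimension, the isotropic subvariety $\Lambda_{X,r}^{\sigma,+}$ is Lagrangian. The density of the very stable locus then follows exactly as in the proof of the $\mathrm{Sp}/\mathrm{SO}$ proposition above: by the theorem on fibre dimension the generic fibre of $p$ is zero-dimensional, and since each fibre is the cone of nilpotent Higgs fields (nilpotency being invariant under scaling) it reduces to $\{0\}$; by upper semicontinuity of fibre dimension the locus with $p^{-1}(E)=\{0\}$, that is the very stable locus, is open and dense in each component of $\al U_X^{\sigma,+}(r)$. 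The $\sigma$-alternating case is identical with $H^0(X,E\otimes\sigma^*E\otimes K_X)_-$ in place of the $+$ eigenspace.

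The crux of the argument is the interplay of two structural facts: that $f$ is genuinely symplectic, so the fixed locus is a symplectic (not merely isotropic) submanifold, and that the nilpotent cone is $f$-stable. Once both hold the Lagrangian property is forced, and the only delicate point is the dimension bookkeeping: one must be sure the intersection with the symplectic submanifold does not drop below half-dimension, which is precisely what the presence of the zero section rescues.
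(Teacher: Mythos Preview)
Your proof is correct and follows essentially the same route as the paper: both deduce isotropy of $\Lambda_{X,r}^{\sigma,+}$ from Laumon's result that $\Lambda_{X,r}$ is Lagrangian in the ambient $T^*\al U_X(r,0)$ together with the fact that $T^*\al U_X^{\sigma,+}(r)$ is a symplectic submanifold, and both obtain the lower dimension bound from the zero section. You supply a little more detail than the paper does---the $f$-stability of $\Lambda_{X,r}$ via $\sr H^{-1}(0)$, and the fibre-dimension argument for density of the very stable locus---but the structure of the argument is the same.
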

\begin{proof}We prove the $\sigma-$symmetric case, the $\sigma-$alternating is absolutely the same. If $V$ is symplectic space, then the restriction of a Lagrangian subspace $L\subset V$ to a symplectic subspace $F\subset V$ is an isotropic subspace of $F$, this implies that $\Lambda_{X,r}^{\sigma,+}$ is an isotropic subspace of $T^*\al U_X^{\sigma,+}(r)$, in particular its dimension is at most  $\text{dim}(\al U_X^{\sigma,+}(r))$. But it is clear that $\al U_X^{\sigma,+}(r)\subset \Lambda_{X,r} ^{\sigma,+}$, by seeing any anti-invariant vector bundle $E$ as the trivial pair $(E,0)\in \Lambda_{X,r}^{\sigma,+}$. This implies that $$\text{dim}(\Lambda_{X,r}^{\sigma,+})=\frac{1}{2}\text{dim}(T^*\al U_X^{\sigma,+}(r)),$$
hence $\Lambda_{X,r}^{\sigma,+}$ is Lagrangian of $T^*\al U_X^{\sigma,+}(r)$.
\end{proof}

\subsection{$\sigma-$symmetric case}
\subsubsection{The ramified case}Suppose that $\pi:X\ra Y$ is ramified and denote $m=r(r-1)(g_X-1).$ Recall that $\text{deg}(\tilde{S})=2m$, where $\tilde{S}=Ram(\tilde{X}_s/X)$. We fix the positive linearisation on $\al O(\tilde{S})$. 

For general $s\in W^{\sigma,+}$, consider the subvariety $\al P^+\subset \text{Pic}^m(\tilde{X}_s )$  of isomorphism classes of line bundles $L$ such that $$\widetilde{\text{Nm}}(L)\cong \al O_{\tilde{Y}_s}(S), $$ where $S=Ram(\tilde{Y}_s/ Y)$, and $\widetilde{\text{Nm}}:\text{Pic}^0(\tilde{X}_s )\ra \text{Pic}^0(\tilde{Y}_s)$ the norm map attached to $\tilde{\pi}:\tilde{X}_s\ra \tilde{Y}_s$.\\  
By Proposition \ref{descentram}, $\tilde{\pi}^*\al O_{\tilde{Y}_s}(S)=\al O_{\tilde{X}_s}(\tilde{S})$, it follows that for each $L\in \al P^+$, we have $$\tilde{\sigma}^*L\cong L^{-1}(\tilde{S}).$$ In particular any $M\in \al P^+$ gives by tensor product an isomorphism $$\al P^+\stackrel{\sim}{\longrightarrow}\text{Prym}(\tilde{X}_s /\tilde{Y}_s).$$

\begin{lemm} \label{dimprym} For general $s\in W^{\sigma,+}$, we have 
$$\text{dim}(\al P^+)= \text{dim}(\al U_X^{\sigma,+}(r)).$$
\end{lemm}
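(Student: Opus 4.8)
The plan is to compute $\dim(\al P^+)$ directly as the dimension of the Prym variety $\text{Prym}(\tilde{X}_s/\tilde{Y}_s)$, since tensoring by any fixed $M\in\al P^+$ gives an isomorphism $\al P^+\cong\text{Prym}(\tilde{X}_s/\tilde{Y}_s)$, and then match it against the formula for $\dim(\al U_X^{\sigma,+}(r))$ established earlier. Recall from the preceding theorem that
\[
\dim(\al U_X^{\sigma,+}(r))=r^2(g_Y-1)+n\frac{r(r+1)}{2}.
\]
So the whole task reduces to showing that the Prym variety of the double cover $\tilde{\pi}:\tilde{X}_s\ra\tilde{Y}_s$ has exactly this dimension.

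First I would record the genera of the four curves in the commutative diagram of Proposition \ref{descentram}. The genus $g_{\tilde{X}_s}$ is given by the spectral curve formula with $L=K_X$, namely $g_{\tilde{X}_s}=r^2(g_X-1)+1$. For $\tilde{Y}_s$ I would use Remark \ref{Ytildespectral}: it is the spectral cover of $Y$ attached to the line bundle $L=K_Y\otimes\Delta$, whose degree is $\deg(K_Y\otimes\Delta)=(2g_Y-2)+n=g_X-1$ by the Hurwitz relation $g_X=2g_Y-1+n$. Plugging $\deg(L)=g_X-1$ into the genus formula of the excerpt gives
\[
g_{\tilde{Y}_s}=(g_X-1)\frac{r(r-1)}{2}+r(g_Y-1)+1.
\]

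Next I would apply the standard fact that for a double cover $\tilde{\pi}:\tilde{X}_s\ra\tilde{Y}_s$ one has $\dim(\text{Prym})=g_{\tilde{X}_s}-g_{\tilde{Y}_s}$, which is just $\dim\text{Pic}^0(\tilde{X}_s)-\dim\text{Pic}^0(\tilde{Y}_s)$ since the Prym is the connected component of the kernel of the norm map and the norm is surjective on Jacobians. Subtracting the two genera and using $g_X-1=2(g_Y-1)+n$ to eliminate $g_X$, I expect the terms to collapse to precisely $r^2(g_Y-1)+n\frac{r(r+1)}{2}$, matching $\dim(\al U_X^{\sigma,+}(r))$. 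This is a routine but slightly delicate bookkeeping computation, and I would carry it out carefully, tracking the coefficient of $n$ separately from the coefficient of $(g_Y-1)$.

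The one genuine point requiring justification, rather than pure arithmetic, is the input that for \emph{general} $s\in W^{\sigma,+}$ the spectral curve $\tilde{X}_s$ is smooth and the quotient $\tilde{Y}_s$ is smooth as well, so that the genus formulas and the clean relation $\dim(\text{Prym})=g_{\tilde{X}_s}-g_{\tilde{Y}_s}$ actually apply; this is why the statement is phrased for general $s$. The smoothness of $\tilde{X}_s$ follows from Lemma \ref{smoothlemma} once one knows the generic member is smooth, and the smoothness of $\tilde{Y}_s$ follows from Remark \ref{Ytildespectral} together with the same lemma applied over $Y$. The main obstacle I anticipate is therefore not the dimension count itself but ensuring these genericity and smoothness hypotheses are in force, and in particular that $\tilde{\sigma}$ acts on $\tilde{X}_s$ with the expected ramification $q^{-1}(R)$ (established in Proposition \ref{descentram}) so that the Hurwitz computation for $g_{\tilde{Y}_s}$ via the Prym-genus relation is valid; everything else is a direct substitution.
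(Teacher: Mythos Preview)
Your approach is correct and in fact coincides with the alternative the paper itself flags in the remark immediately following the proof: ``We can calculate $\dim(\al P^+)$ using the fact that $\tilde Y_s$ is a spectral curve over $Y$.'' The paper's own argument takes the other route: instead of computing $g_{\tilde Y_s}$ via the spectral genus formula for $L=K_Y\otimes\Delta$, it applies Hurwitz directly to the double cover $\tilde\pi:\tilde X_s\to\tilde Y_s$, using that the ramification divisor is $\tilde R=q^{-1}(R)$ of degree $2rn$, to get $\dim(\al P^+)=g_{\tilde X_s}-g_{\tilde Y_s}=\tfrac12(g_{\tilde X_s}-1+rn)=\tfrac{r^2}{2}(g_X-1)+\tfrac{rn}{2}$. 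Both computations land on the same number; yours is a bit cleaner conceptually since it uses the two spectral structures symmetrically, while the paper's avoids invoking smoothness of $\tilde Y_s$ as a separate input (it only needs $\deg(\tilde R)$).

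The one point you flag as ``requiring justification'' but do not supply is exactly the content the paper spends most of its proof on: showing that the open locus of smooth spectral curves inside $W^{\sigma,+}$ is \emph{non-empty}. Lemma~\ref{smoothlemma} only gives openness, so you still owe the existence of a single smooth member. The paper produces one explicitly: take $s=(0,\dots,0,s_r)$ with $s_r\in H^0(X,K_X^r)_+$ general enough to have only simple zeros, none of them at points of $R$; it checks this is possible because for each $p\in R$ the hyperplane $H^0(K_X^r(-p))$ contains $H^0(K_X^r)_-$ and hence cannot contain $H^0(K_X^r)_+$. Once $\tilde X_s$ is smooth, $\tilde Y_s$ is automatically smooth as the quotient of a smooth curve by an involution, so your worry about separately establishing smoothness of $\tilde Y_s$ over $Y$ is unnecessary.
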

\begin{proof}
For general $s\in W^{\sigma,+}$, the curve $\tilde{X}_s $ is smooth and its genus is $g_{\tilde{X}_s}=r^2(g_X-1)+1$. Indeed, by Lemma \ref{smoothlemma}, and because $W^{\sigma,+}$ is irreducible, it suffices to prove that there exists an $s\in W^{\sigma,+}$ such that $\tilde{X}_s$ is smooth. To do so, take $s_r\in H^0(X,K_X^r)_+$ be a general section that has just simple roots which are different from the ramification points (i.e. outside a finite union of hyperplanes of sections vanishing at points of $R$). This is possible because the hyperplane $H^0(K_X^r(-p))\subset H^0(K_X^r)$ containes $H^0(K_X^r)_-$, so necessarly it does not contain $H^0(K_X^r)_+$, and this for every $p\in R$. Then using the proof of Proposition \ref{smoothlemma}, we deduce that the spectral curve attached to $s=(0,\cdots,0,s_r)$ is smooth. 

 Moreover, if $\tilde{X}_s $ is smooth then, by Lemma \ref{smoothnesslemma}, we deduce that $ \tilde{R}$ has no multiple points (i.e. reduced divisor).
Furthermore we have $\text{deg}(\tilde{R})=2rn$,  so we get 
\begin{align*}
\text{dim}(\al P^+)&=g_{\tilde{X}_s }-g_{\tilde{Y}_s}& \\ &= \dfrac{1}{2}(g_{\tilde{X}_s }-1+rn) & \text{ (by Riemann-Roch)} \\ &= \dfrac{r^2}{2}(g_X-1)+\dfrac{rn}{2}. &
\end{align*}
\end{proof}
\begin{rema} We can calculate $\text{dim}(\al P^+)$ using the fact that $\tilde{Y}_s$ is spectral curve over $Y$.
\end{rema}

\begin{theo}\label{main1}Suppose that $\pi:X\ra Y$ is ramified. Then for general $s\in W^{\sigma,+}$, the rational pushforward map $$q_*:\al P^+\dashrightarrow \al U_X^{\sigma,+}(r)$$  is dominant. In particular  $\al U_X^{\sigma,+}(r)$  is irreducible.
\end{theo}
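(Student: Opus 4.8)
The plan is to realise $q_*$ as the restriction to a general Hitchin fibre of the cotangent projection, and to deduce dominance from very stability together with the dimension count already established.

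First I would check that $q_*$ is well defined as a rational map into $\al U_X^{\sigma,+}(r)$. By Proposition \ref{bnrprop}, for smooth $\tilde{X}_s$ the assignment $L\mapsto q_*L$ gives a bijection between $\text{Pic}^m(\tilde{X}_s )$ and Higgs pairs $(E,\phi)$ with $\sr H_E(\phi)=s$, and on a dense open subset the resulting $E$ is stable. For $L\in\al P^+$ the defining condition $\widetilde{\text{Nm}}(L)\cong\al O_{\tilde{Y}_s}(S)$ together with Proposition \ref{descentram} yields $\tilde{\sigma}^*L\cong L^{-1}(\tilde{S})$. Base change along the square relating $q,\sigma,\tilde{\sigma}$ (both $\sigma,\tilde{\sigma}$ being isomorphisms) gives $\sigma^*(q_*L)\cong q_*(\tilde{\sigma}^*L)\cong q_*(L^{-1}(\tilde{S}))$, and by duality of the finite flat morphism $q$, with $\omega_{\tilde{X}_s/X}=\al O(\tilde{S})$, the right-hand side is $(q_*L)^*$. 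Thus $E=q_*L$ is anti-invariant. The delicate point, which I expect to be the main obstacle, is to verify that the induced isomorphism $\sigma^*E\cong E^*$ is \emph{$\sigma$-symmetric} rather than $\sigma$-alternating: this is a sign computation tracking the positive linearisations fixed on $K_X$, $\al O_X$ and $\al O(\tilde{S})$ through Grothendieck duality, dual to the eigenspace bookkeeping of Proposition \ref{Hitchinmapequivarience}. Granting it, $q_*$ maps $\al P^+$ rationally into $\al U_X^{\sigma,+}(r)$.

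Next I would prove dominance. By Lemma \ref{dimprym} and the dimension formula for $W^{\sigma,+}$,
$$\text{dim}\,T^*\al U_X^{\sigma,+}(r)=2\,\text{dim}\,\al U_X^{\sigma,+}(r)=\text{dim}\,\al U_X^{\sigma,+}(r)+\text{dim}\,W^{\sigma,+},$$
so the source and target of $\Pi:T^*\al U_X^{\sigma,+}(r)\to\al U_X^{\sigma,+}(r)\times W^{\sigma,+}$ have equal dimension, and it suffices to exhibit a finite generic fibre. The fibre over $(E,s)$ is $\sr H_E^{-1}(s)$. By Theorem \ref{denseverystable} the very stable bundles are dense in $\al U_X^{\sigma,+}(r)$, and for such $E$ one has $\sr H_E^{-1}(0)=\{0\}$; since each component $\sr H_i$ is homogeneous of degree $i$, a weighted-homogeneous polynomial map between vector spaces of equal dimension (here $H^0(X,E\otimes\sigma^*E\otimes K_X)_+\to W^{\sigma,+}$, equidimensional by Lemma \ref{serre}) whose fibre over the origin is a single point is finite and surjective. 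Hence $\Pi$ is dominant, so for general $s\in W^{\sigma,+}$ the composite $\sr H^{-1}(s)\to\al U_X^{\sigma,+}(r)$ is dominant. Since $\sr H^{-1}(s)$ is, via Proposition \ref{bnrprop}, an open dense subset of $\al P^+$ on which the cotangent projection is exactly $q_*$, the map $q_*:\al P^+\dashrightarrow\al U_X^{\sigma,+}(r)$ is dominant.

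Finally, for irreducibility I would use that $\al P^+$ is a translate of $\text{Prym}(\tilde{X}_s/\tilde{Y}_s)$ and that in the ramified case the lifted involution $\tilde{\sigma}$ has fixed points, its ramification locus being $q^{-1}(R)\neq\emptyset$ by Proposition \ref{descentram}. As in the symplectic case treated above, the presence of fixed points forces the Prym to be connected, so $\al P^+$ is irreducible. A dominant rational image of an irreducible variety is irreducible and dense, whence $\al U_X^{\sigma,+}(r)$, being the closure of such an image, is irreducible.
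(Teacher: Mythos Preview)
Your argument follows essentially the same route as the paper's: well-definedness via base change and relative duality, dominance of $\Pi$ from density of very stable bundles, and irreducibility from connectedness of the Prym in the ramified case. Two points deserve mention. First, the $\sigma$-symmetry verification you flag as delicate is handled in the paper not through Grothendieck duality sign-tracking but by a direct eigenspace computation: one checks that $H^0(\tilde{X}_s,\tilde{\sigma}^*L^{-1}\otimes L^{-1}\otimes\al O(\tilde{S}))_+=\bb C$ using $\widetilde{\text{Nm}}(L)=\al O(S)$, so the unique (up to scalar) pairing is automatically in the $+$-eigenspace. Second, your claim that $\sr H^{-1}(s)$ is an open dense subset of $\al P^+$ requires the \emph{converse} direction, which you do not address: given $(E,\phi)$ with $E$ $\sigma$-symmetric and $\phi\in H^0(E\otimes\sigma^*E\otimes K_X)_+$, one must show the associated line bundle on $\tilde{X}_s$ lies in $\al P^+$. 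The paper does this by comparing the BNR resolution $0\to L(-\tilde{S})\to q^*E\to q^*(E\otimes K_X)\to L\otimes q^*K_X\to 0$ with its $\tilde{\sigma}$-pullback-dual, identifying the middle maps via $\psi$ and the invariance $^t(\sigma^*\phi)=\phi$, and reading off $\tilde{\sigma}^*L\cong L^{-1}(\tilde{S})$ from the kernels.
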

\begin{proof}
First, by the  duality for finite flat morphisms, this map is well defined, more precisely, one has 
\begin{align*}
 \sigma^* q_*L&\cong  q_*(\tilde{\sigma}^*L)\\
                        &\cong  q_*(L^{-1}\otimes \al O(\tilde{S}))\\
                        &\cong  (q_*L)^*,
\end{align*} 
the last isomorphism is the duality for finite flat morphisms (see for example \cite{HA}, Ex. III.6.10). 
The isomorphism $\psi:\sigma^*(q_*L)\ra (q_*L)^*$ is defined using the pairing $\tilde{\psi}:q_*L\otimes\sigma^*(q_*L)\ra \al O_X$, which is defined as follows: for $v,w\in H^0(U,q_*L)=H^0(q^{-1}(U),L)$, we put $$\tilde{\psi}(v\otimes\tilde{\sigma}^*w)\cdot \xi=\gen{v,\tilde\sigma^*w},$$ where $\xi\in H^0(\tilde{X}_s,\al O(\tilde{S}))$ is the canonical section equals the derivative of $q:\tilde{X}_s\ra X$, and $\gen{,}:L\otimes\tilde{\sigma}^*L\ra\al O(\tilde{S})$ is an isomorphism. This last isomorphism is $\tilde{\sigma}-$symmetric for the positive linearisation on $\al O(\tilde{S})$. Indeed, it is a global section of  $\tilde{\sigma}^*L^{-1}\otimes L^{-1}\otimes\al O(\tilde{S})$ ($\cong \al O_{\tilde{X}_s}$) and we have 
\begin{align*} 
H^0(\tilde{X}_s,\tilde{\sigma}^*L^{-1}\otimes L^{-1}\otimes\al O(\tilde{S}))_+&=H^0(\tilde{X}_s,\tilde{\pi}^*\widetilde{\text{Nm}}(L^{-1})\otimes\tilde{\pi}^*(\al O(S)))_+\\&=H^0(\tilde{X}_s,\tilde{\pi}^*(\al O(-S))\otimes\tilde{\pi}^*(\al O(S)))_+\\ &= H^0(\tilde{Y}_s, \al O_{\tilde{Y}_s})\\&=\bb C .
\end{align*}
Further $\xi$ is $\tilde{\sigma}-$invariant global section of $\al O(\tilde{S})$ with respect to the positive linearisation. Hence $\psi$ is $\sigma-$symmetric. 

 Conversely, given a $\sigma-$symmetric anti-invariant stable vector bundle $E$ and $\phi\in H^0(X,E\otimes \sigma^*E\otimes K_X)_+$ such that $\sr H_E(\phi)=s$, then the corresponding line bundle over $\tilde{X}_s $ is in $\al P^+$. To see this, consider the exact sequence (see \cite{BNR}, Remark 3.7) 
\begin{equation}\label{seq1}
0\ra L(-\tilde{S})\lra q^*(E)\xrightarrow{q^*\phi-x} q^*(E\otimes K_X)\lra L\otimes q^*K_X\ra 0.
\end{equation} 
By taking the dual,  pulling-back by $\tilde{\sigma}$ and than taking the tensor product by $\tilde{\sigma}^*q^*K_X$, we get the exact sequence
\begin{equation} \label{seq2}
0\ra \tilde{\sigma}^*(L^{-1})\ra \tilde{\sigma}^*q^*(E^*)\xrightarrow{\tilde{\sigma}^*(\,^t(q^*\phi))-\tilde{\sigma}^*x}\tilde{\sigma}^*( q^*(E^*\otimes K_X))\ra \tilde{\sigma}^*(L^{-1}(\tilde{S})\otimes q^*K_X)\ra 0 \end{equation}
Since $\phi$ is invariant: $^t(\sigma^*\phi)=\phi$, the middle maps of the exact sequences (\ref{seq1}) and (\ref{seq2}) are identified using the isomorphism $\psi:\sigma^*E\ra E^*$, hence they have  isomorphic kernels. This implies that $L(-\tilde{S})\cong \tilde{\sigma}^*(L^{-1}).$
Thus $L\in \al P^+$.\\ 

Moreover, by Lemma \ref{dimprym} we deduce that whenever $\tilde{X}_s $ is smooth we have  $\text{dim}(\al P^+)=\text{dim}(\al U_X^{\sigma,+}(r))$.\\ 
Now, if $E\in \al U_X^{\sigma,+}(r)$ is very stable then the map $$\sr H_E:H^0(X,E\otimes\sigma^*E\otimes K_X)_+\ra W^{\sigma,+}$$ is dominant, it follows that the map$$\Pi:T^*\al U_X^{\sigma,+}(r)\lra \al U_X^{\sigma,+}(r) \times W^{\sigma,+}$$ is dominant too, because the locus of very stable vector bundles is dense inside $\al U_X^{\sigma,+}(r)$ by Theorem \ref{denseverystable}. In particular, fixing a general $s\in W^{\sigma,+}$, we obtain a dominant morphism $$\sr H^{-1}(s)\lra \al U_X^{\sigma,+}(r),$$ 
where $\sr H:T^*\al U_X^{\sigma,+}(r)\ra W^{\sigma,+}$ is the Hitchin morphism. But, by Proposition \ref{bnrprop} and what we have said above, we deduce that   $(E,\phi)\in\sr H^{-1}(s)$ if and only if $E\cong q_*L$ for some $L\in \al P^+$. It follows that the rational map $$q_*:\al P^+\dashrightarrow \al U_X^{\sigma,+ }(r)$$ is dominant. 

As $\tilde{R}=q^{-1}(R)$, one deduces that $\tilde{X}_s \ra \tilde{Y}_s$ is ramified. This implies the connectedness of $\al P^+$, hence $\al U_X^{\sigma,+}(r)$ is irreducible.
\end{proof}

\subsubsection{The \'etale case}Assume that the cover $\pi:X\ra Y$ is \'etale. In this case, any $\sigma-$invariant vector bundle over $X$ descends to $Y$ by Kempf's Lemma. In particular, we have $$K_X=\pi^*(K_Y\otimes \Delta)=\pi^*K_Y.$$ The linearisation on $K_X$ attached to $K_Y\otimes \Delta$ is called the positive linearisation. Recall that Serre duality is anti-equivariant for this linearisation.\\ 
Remark that $\al O(\tilde{S})=\tilde{\pi}^*\al O(S)=\tilde{\pi}^*(\al O(S)\otimes \tilde{\Delta})$, where $\tilde{\Delta}=\text{det}(\tilde{\pi}_*\al O_{\tilde{X}_s})^{-1}$. We fix the linearisation on $\al O(\tilde{S})$ attached to the $\al O(S)$ and we continue calling it the positive linearisation.

\begin{theo} \label{etalecase}
Suppose that $\pi:X\ra Y$ is \'etale, then  the pushforward rational map $q_*$ induces a dominant map $$q_*:\al P^+\dashrightarrow \al U_X^{\sigma,+}(r).$$ In particular $\al U_X^{\sigma,+}(r)$ has two connected components.
\end{theo}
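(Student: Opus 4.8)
The plan is to reuse essentially verbatim the argument of Theorem \ref{main1}: the construction of the pushforward map, the dimension count, and the dominance via very stability are all insensitive to whether $\pi$ is ramified, and only the final connectedness statement changes — precisely because $\tilde{\sigma}$ acquires no fixed points in the \'etale case. First I would check that $q_*$ is well defined into $\al U_X^{\sigma,+}(r)$. For $L\in\al P^+$ one has $\tilde{\sigma}^*L\cong L^{-1}(\tilde S)$ by Proposition \ref{descentram}, and duality for finite flat morphisms gives $\sigma^*q_*L\cong q_*(\tilde{\sigma}^*L)\cong q_*(L^{-1}(\tilde S))\cong(q_*L)^*$. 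The resulting $\psi\colon\sigma^*(q_*L)\to(q_*L)^*$ is $\sigma$-symmetric: the pairing $\gen{\,,\,}\colon L\otimes\tilde{\sigma}^*L\to\al O(\tilde S)$ is a global section of $\tilde{\sigma}^*L^{-1}\otimes L^{-1}\otimes\al O(\tilde S)\cong\al O_{\tilde X_s}$, and with the \'etale positive linearisation on $\al O(\tilde S)$ (the one attached to $\al O(S)$) fixed just before the theorem, one gets $H^0(\tilde X_s,\tilde{\sigma}^*L^{-1}\otimes L^{-1}\otimes\al O(\tilde S))_+\cong H^0(\tilde Y_s,\al O_{\tilde Y_s})=\bb C$, exactly as in the ramified case. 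Conversely, starting from $(E,\phi)$ with $\phi\in H^0(X,E\otimes\sigma^*E\otimes K_X)_+$ and $\sr H_E(\phi)=s$, the sequences (\ref{seq1}) and (\ref{seq2}) are identified through $\psi$ since $\phi$ is $f$-invariant, forcing $L(-\tilde S)\cong\tilde{\sigma}^*(L^{-1})$, i.e. $L\in\al P^+$. None of this uses ramification of $\pi$.

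Next I would carry over the numerics. Lemma \ref{dimprym} still yields $\dim\al P^+=\frac{r^2}{2}(g_X-1)+\frac{rn}{2}=\dim\al U_X^{\sigma,+}(r)$, now with $n=0$. Combining the correspondence of Proposition \ref{bnrprop} with the identification of $\al P^+$ above shows that for general $s$ the fiber $\sr H^{-1}(s)$ consists exactly of the pairs $(q_*L,\phi)$ with $L\in\al P^+$. Since Theorem \ref{denseverystable} guarantees density of very stable anti-invariant bundles, the map $\sr H_E$ is dominant for generic $E$; hence $\Pi$ is dominant, and for general $s$ the induced rational map $q_*\colon\al P^+\dashrightarrow\al U_X^{\sigma,+}(r)$ is dominant.

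It remains to count connected components, which is where the \'etale hypothesis enters. By Proposition \ref{descentram} the ramification locus of $\tilde{\sigma}$ on $\tilde X_s$ is $q^{-1}(R)$; since $\pi$ is \'etale, $R=\emptyset$, so $\tilde{\sigma}$ is fixed-point-free and $\tilde{\pi}\colon\tilde X_s\to\tilde Y_s$ is an \'etale double cover. The Prym variety of an \'etale double cover has two connected components (the same phenomenon producing the two components of $\al M_X(\text{SO}_r)$ in the preliminaries), so $\al P^+$, a translate of $\text{Prym}(\tilde X_s/\tilde Y_s)$, splits as $\al P^+_0\sqcup\al P^+_1$ with each part irreducible of dimension $\dim\al U_X^{\sigma,+}(r)$ and dominating a connected component of the smooth variety $\al U_X^{\sigma,+}(r)$. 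As $q_*$ is dominant, every component is reached, so $\al U_X^{\sigma,+}(r)$ has at most two components.

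The main obstacle is to show these two are genuinely distinct, i.e. that $\al P^+_0$ and $\al P^+_1$ do not both dominate the same component. I would resolve this by transporting through $q_*$ the discrete invariant that separates the two components of the Prym of an \'etale double cover: Mumford's parity invariant, the parity of $h^0$ relative to a fixed theta characteristic, which is locally constant and distinguishes $\al P^+_0$ from $\al P^+_1$. On the image this should correspond to a Stiefel--Whitney type parity, such as that of $h^0(X,E\otimes\kappa)$ for a suitable theta characteristic $\kappa$ on $X$; verifying that this parity is locally constant on $\al U_X^{\sigma,+}(r)$ and takes both values then forces exactly two connected components.
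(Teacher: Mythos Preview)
Your strategy is the paper's strategy: reuse Theorem~\ref{main1} verbatim for dominance, then separate the two components of $\al P^+$ via a Mumford parity that survives pushforward. Two technical points deserve tightening, and both are exactly where the paper does extra work.

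First, in the \'etale case the implication ``$\tilde{\sigma}^*L\cong L^{-1}(\tilde S)$, hence $L\in\al P^+$'' is not automatic: since $\tilde{\pi}$ is now \'etale, $\widetilde{\mathrm{Nm}}(L)$ could be either $\al O_{\tilde Y_s}(S)$ or $\al O_{\tilde Y_s}(S)\otimes\tilde\Delta$. The paper rules out the second option by observing that the isomorphism $\tilde{\sigma}^*L\to L^{-1}(\tilde S)$ induced by the $\sigma$-symmetric $\psi$ is itself $\tilde{\sigma}$-symmetric for the positive linearisation on $\al O(\tilde S)$, and then invoking Lemma~\ref{normlinebundle}. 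You should insert this step. Second, your final paragraph is the right idea but underspecified: to make the parity of $h^0(\tilde X_s, L\otimes(\text{theta char.}))$ descend to a parity on $q_*L$ via the projection formula, you need the theta characteristic on $\tilde X_s$ (shifted by a square root $M$ of $\al O(\tilde S)$) to be pulled back from $X$, indeed from $Y$. The paper isolates this as Lemma~\ref{pullbackkappa}, producing an even $\kappa$ on $\tilde X_s$ and $M$ with $M^{-1}\otimes\kappa=q^*\pi^*\kappa'$; then $h^0(\tilde X_s,L\otimes M^{-1}\otimes\kappa)=h^0(X,q_*L\otimes\pi^*\kappa')$ gives the invariant you want on $\al U_X^{\sigma,+}(r)$, and its deformation invariance follows from Mumford because $\pi_*E$ carries an orthogonal structure when $E$ is $\sigma$-symmetric (since $\pi$ is \'etale, $(\pi_*E)^*\cong\pi_*(E^*)\cong\pi_*(\sigma^*E)\cong\pi_*E$).
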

\begin{proof}
Clearly $\tilde{X}_s \ra\tilde{Y}_s$ is \'etale if and only if $X\ra Y$ is. Hence $\al P^+$ has two connected components. We show that it is impossible to produce the same stable vector bundle $E$ as the direct image of two line bundles from the two connected components of $\al P^+$. To see this assume that we have  $L$ and $L'$, two line bundles each from a connected component of $\al P^+$, such that $q_*L\cong q_*L'\in \al U_X^{\sigma,+}(r)$. Let $M$ be a line bundle on $\tilde{X}_s $ such that $M$ descends to $\tilde{Y}_s$ and $\widetilde{\text{Nm}}(M)=\al O(S)$, in particular $M^2\cong \al O(\tilde{S})$. Let  $\kappa$ be an even theta characteristic on $\tilde{X}_s $ such that $M^{-1}\otimes\kappa$  is the pullback of a theta characteristic $\kappa'$ on $Y$, i.e $M^{-1}\otimes\kappa=q^*\left(\pi^*(\kappa')\right)$, note that  such a pair $(M,\kappa)$ exists by Lemma \ref{pullbackkappa} below. Then, by \cite{BL}, Theorem 12.6.2, we know that $$h^0(\tilde{X}_s ,L\otimes M^{-1}\otimes\kappa)\equiv 0\mod 2,\;\;h^0(\tilde{X}_s ,L'\otimes M^{-1}\otimes\kappa)\equiv 1\mod 2$$ 
Using the projection formula, this gives $$h^0(X,q_*L\otimes \pi^*\kappa')\equiv 0\mod2\; \text{ and }\; h^0(X,q_*L'\otimes \pi^*\kappa')\equiv 1\mod2$$ a contradiction.

Moreover, if  $E\in \al U_X^{\sigma,+}(r)$, then the associated line bundle $L$ over $\tilde{X}_s$ constructed in the proof of Theorem \ref{main1}, verifies $\tilde{\sigma}^*L\cong L^{-1}(\tilde{S})$. Since $\tilde{\pi}:\tilde{X}_s\ra \tilde{Y}_s$ is \'etale, it follows that either $\widetilde{\text{Nm}}(L)=\al O_{\tilde{Y}_s}(S)$, or $\widetilde{\text{Nm}}(L)=\al O_{\tilde{Y}_s}(S)\otimes \tilde{\Delta}$. But $\psi$ induces a $\tilde{\sigma}-$symmetric isomorphism $\tilde{\sigma}^*L\ra L^{-1}(\tilde{S})$, and because we have fixed the positive linearisation on $\al O_{\tilde{X}_s}(\tilde{S})$, it follows that 
$\widetilde{\text{Nm}}(L)=\al O_{\tilde{Y}_s}(S)$. So $L\in \al P^+$. \\
Now the image of the rational map  $q_*:\al P^+\ra\al U_X^{\sigma,+}(r)$ has two connected components, which are dense, and by Mumford \cite{M2}, the map $$E\ra h^0(X,\pi_*E\otimes \kappa')$$ is constant under deformation of $E$. Hence $\al U_X^{\sigma,+}(r)$ can't be irreducible. It follows that it has two connected components. 
\end{proof}

\begin{lemm} \label{pullbackkappa} Suppose that $\pi:X\ra Y$ is \'etale and $\tilde{X}_s$ is smooth. Then there exists an even theta characteristic $\kappa$ on $\tilde{X}_s $, and a line bundle $M$ that descends to $\tilde{Y}_s$ and  verifies $M^2\cong \al O(\tilde{S})$, such that $M^{-1}\otimes\kappa$ descends to a theta characteristic on $Y$.
\end{lemm}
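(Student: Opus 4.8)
The plan is to manufacture $\kappa$, $M$ and the theta characteristic $\kappa'$ on $Y$ by pulling back theta characteristics from the quotient curves, exploiting that in the étale case both $\pi$ and $\tilde\pi$ are unramified. Concretely, first I would fix an arbitrary theta characteristic $\kappa'$ on $Y$ and a theta characteristic $\theta$ on $\tilde Y_s$, and set
$$N=\theta\otimes\tilde q^*(\kappa')^{-1},\qquad M=\tilde\pi^*N,\qquad \kappa=\tilde\pi^*\theta .$$
Since $\tilde q:\tilde Y_s\ra Y$ is ramified along $S$, Hurwitz gives $\al O(S)=K_{\tilde Y_s}\otimes\tilde q^*K_Y^{-1}$, so $N^2\cong K_{\tilde Y_s}\otimes\tilde q^*K_Y^{-1}=\al O(S)$; hence, using $\al O(\tilde S)=\tilde\pi^*\al O(S)$ (see the proof of Theorem \ref{etalecase}), the bundle $M=\tilde\pi^*N$ descends to $\tilde Y_s$ and satisfies $M^2\cong\al O(\tilde S)$. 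Because $\tilde\pi$ is étale, $K_{\tilde X_s}=\tilde\pi^*K_{\tilde Y_s}$, whence $\kappa^2\cong\tilde\pi^*K_{\tilde Y_s}=K_{\tilde X_s}$, so $\kappa$ is a theta characteristic on $\tilde X_s$. Finally, from commutativity of the square relating $\tilde X_s,X,\tilde Y_s,Y$ one has $\tilde\pi^*\tilde q^*=q^*\pi^*$, and therefore
$$M^{-1}\otimes\kappa=\tilde\pi^*\bigl(N^{-1}\otimes\theta\bigr)=\tilde\pi^*\tilde q^*\kappa'=q^*\pi^*\kappa' ,$$
i.e. $M^{-1}\otimes\kappa$ is the pullback of $\kappa'$ and so descends to the theta characteristic $\kappa'$ on $Y$. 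Thus \emph{all} the required relations hold automatically, for every choice of $\theta$; the only remaining point is the parity of $\kappa$.

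The hard part is to choose $\theta$ so that $\kappa=\tilde\pi^*\theta$ is \emph{even}. Here I would use the projection formula together with $\tilde\pi_*\al O_{\tilde X_s}=\al O_{\tilde Y_s}\oplus\tilde\Delta^{-1}$ and $\tilde\Delta^{-1}\cong\tilde\Delta$ to get
$$h^0(\tilde X_s,\kappa)=h^0(\tilde Y_s,\theta)+h^0(\tilde Y_s,\theta\otimes\tilde\Delta),$$
so that $\kappa$ is even precisely when $\theta$ and $\theta\otimes\tilde\Delta$ have equal parity. By the Riemann--Mumford theory of theta characteristics (cf. \cite{BL}), the map $\alpha\mapsto h^0(\tilde Y_s,\theta\otimes\alpha)+h^0(\tilde Y_s,\theta)\bmod 2$ is a quadratic form $q_\theta$ on $\text{Pic}(\tilde Y_s)[2]$ whose associated bilinear form is the nondegenerate symplectic Weil pairing $e_2$, and replacing $\theta$ by $\theta\otimes\beta$ sends $q_\theta(\tilde\Delta)$ to $q_\theta(\tilde\Delta)+e_2(\tilde\Delta,\beta)$. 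As $\tilde X_s$ is smooth and irreducible, the connected double cover $\tilde\pi$ is nontrivial, so $\tilde\Delta\neq 0$; nondegeneracy of $e_2$ then yields $\beta$ with $e_2(\tilde\Delta,\beta)=1$, which shows that $q_\theta(\tilde\Delta)$ can be made to vanish. Choosing such a $\theta$, the associated $\kappa=\tilde\pi^*\theta$ is even, and together with the corresponding $N$ and $M$ this produces the triple $(\kappa,M,\kappa')$ demanded by the lemma.

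I expect the parity step to be the only genuine obstacle: the construction of $M$ and the verification that $\kappa$ is a theta characteristic descending correctly are formal consequences of the étale tower and Hurwitz's formula, whereas controlling $h^0(\tilde X_s,\kappa)\bmod 2$ forces one to pass through the quadratic refinement of the Weil pairing on $2$-torsion and to use its nondegeneracy to annihilate the $\tilde\Delta$-contribution to the parity.
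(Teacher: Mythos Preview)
Your proof is correct and follows the same underlying strategy as the paper: pull back a theta characteristic from $\tilde Y_s$ and use the projection formula to reduce evenness of $\kappa=\tilde\pi^*\theta$ to a parity condition on $\theta$ and $\theta\otimes\tilde\Delta$.

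The differences are purely organizational. The paper first invokes a result from \cite{BL} (p.~382) asserting the existence of $\kappa''$ on $\tilde Y_s$ with both $h^0(\kappa'')$ and $h^0(\kappa''\otimes\tilde\Delta)$ even, and only afterwards manufactures $M$ by choosing an arbitrary square root $N$ of $\al O(S)$ and correcting by a $2$-torsion class $\alpha$ so that $N^{-1}\otimes\kappa''\otimes\alpha=\tilde q^*\kappa'$. You instead build $N=\theta\otimes\tilde q^*(\kappa')^{-1}$ from the outset, so the descent condition $M^{-1}\otimes\kappa=q^*\pi^*\kappa'$ holds automatically and no $\alpha$-correction is needed; then you supply a self-contained quadratic-form argument (nondegeneracy of the Weil pairing applied to $\tilde\Delta\neq0$) to arrange equal parity of $\theta$ and $\theta\otimes\tilde\Delta$, which is exactly what the cited result in \cite{BL} would give. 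Your route is marginally more economical and avoids the external citation; the paper's route separates the parity step from the construction of $M$ more cleanly.
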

\begin{proof} Recall that we denoted by $\tilde{\Delta}:=\text{det}\left(\tilde{\pi}_*\al O_{\tilde{X}_s }\right)^{-1}$, note that $\tilde{\Delta}$ is non-trivial $2-$torsion line bundle over $\tilde{Y}_s$.
By \cite{BL}, page 382, we know that there exists an even theta characteristic, say $\kappa''$, on $\tilde{Y}_s$ such that  $$h^0(\kappa'')\equiv h^0(\kappa''\otimes\tilde{\Delta})\equiv 0\mod 2,$$
if we set $\kappa=\tilde{\pi}^*\kappa''$, we get by the projection formula 
$$h^0(\kappa)=h^0(\kappa'')+h^0(\kappa''\otimes\tilde{\Delta})\equiv 0\mod 2,$$
hence $\kappa$ is even theta characteristic. Moreover, let $N$ be a line bundle on $\tilde{Y}_s$ such that $N^2\cong \al O(S)$ (recall that $S:=Ram(\tilde{Y}_s/Y)$ has an even degree), then by Hurwitz formula, we have $$(N^{-1}\otimes \kappa'')^2=(\tilde{q}^*\kappa')^2,$$
where $\kappa'$ is a (any) theta characteristic on $Y$. It follows that there exist a $2-$torsion line bundle $\alpha$ on $\tilde{Y}_s$ such that $$N^{-1}\otimes \kappa''\otimes\alpha=\tilde{q}^*\kappa'.$$
It suffices to take $M=\tilde{\pi}^*(N\otimes\alpha^{-1})$.
\end{proof}

\begin{rema} The determinant induces a morphism $\text{det}:\al U_X^{\sigma,+}(r)\ra P=\text{Nm}^{-1}(\al O_Y)$. Recall from the introduction that $P=\al U_X^{\sigma,+}(1)$.  The composition of this map with the direct image $q_*$ gives a map $$\al P^+\lra  P,$$ and each connected component of $\al P^+$ dominates a connected component of $P$. Indeed, let $L\in \al P^+$, and let $M$ be a line bundle in $\al P^+$ such that $\text{Nm}_{\tilde{X}_s/X}(M)=\delta$, where $\delta=(q_*\al O_{\tilde{X}_s})^{-1}$. Then $L\otimes M^{-1}$ is in the Prym variety of $\tilde{X_s}\ra \tilde{Y}_s$, hence can be written as $\tilde{\sigma}^*\lambda\otimes \lambda^{-1}$, it follows that $$\text{det}(q_*L)=\text{Nm}_{\tilde{X}_s/X}(L)\otimes \delta^{-1}=\sigma^*\text{Nm}_{\tilde{X}_s/X}(\lambda)\otimes \text{Nm}_{\tilde{X}_s/X}(\lambda)^{-1}.$$ Then it suffices to recall that the image of the map $\lambda\lra\tilde{\sigma}^*\lambda\otimes \lambda^{-1}$ equals the identity component of the Prym variety when $\lambda$ runs $\text{Pic}^0(\tilde{X})$ and equals the other component when it runs $\text{Pic}^1(\tilde{X})$.  \\ Moreover, this map $\al P^+\ra P$ is in fact surjective morphism. In particular, using Appendix A, this implies that for general line bundle $L$ in both connected components of $\al P^+$, $q_*L$ is stable.
\end{rema}

\subsubsection{Trivial determinant}
In this section, nothing is assumed on the cover ${\pi:X\ra Y}$, i.e. it may be ramified or not. 
Denote by $\al Q^+=\text{Nm}_{\tilde{X}_s /X}^{-1}(\delta)$, where $\delta=(\text{det}(q_*\al O_{\tilde{X}_s }))^{-1}$. For general $s\in W^{\sigma,+}$, $\al Q^+$ is  isomorphic to the Prym variety of the spectral cover $\tilde{X}_s\ra X$. In particular it is connected.

\begin{prop}\label{NCCI} For general $s\in W^{\sigma,+}$,  $\al P^+\cap\al Q^+$ is connected.
\end{prop}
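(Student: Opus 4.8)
The plan is to reduce the statement to the connectedness of a single subgroup of $\text{Pic}^0(\tilde{X}_s)$ and then to control its group of components. Since $\al{SU}_X^{\sigma,+}(r)$ is nonempty and $q_*$ is dominant (Theorem \ref{main1}), the set $\al P^+\cap\al Q^+$ is nonempty; fix a base point $L_0$ in it. Writing $P_1:=\ker(\widetilde{\text{Nm}})$ and $P_2:=\ker(\text{Nm}_{\tilde{X}_s/X})$ inside $\text{Pic}^0(\tilde{X}_s)$, translation by $L_0$ identifies $\al P^+\cap\al Q^+$ with the subgroup $K:=P_1\cap P_2$. Now $\al Q^+$ is a translate of the connected abelian variety $P_2$ (it is isomorphic to the Prym of $\tilde{X}_s\to X$, as recalled just above), and $\al P^+\cap\al Q^+=(\widetilde{\text{Nm}}|_{\al Q^+})^{-1}(\al O_{\tilde{Y}_s}(S))$ is a fiber of the morphism $\widetilde{\text{Nm}}|_{\al Q^+}$, which is a translate of the homomorphism $\tilde{\pi}_*|_{P_2}$ and whose fibers are therefore all translates of $K$. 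Hence it suffices to prove that $K$ is connected.

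Next I would record the structural identities. Because $\text{Nm}_{\tilde{X}_s/X}\circ\tilde{\sigma}^*=\sigma^*\circ\text{Nm}_{\tilde{X}_s/X}$, the involution $\tilde{\sigma}^*$ preserves $P_2$, so it restricts to an involution of the abelian variety $P_2$. Using $\tilde{\pi}^*\circ\widetilde{\text{Nm}}=1+\tilde{\sigma}^*$ together with the injectivity of $\tilde{\pi}^*$ in the ramified case, one obtains $K=\ker\bigl((1+\tilde{\sigma}^*)|_{P_2}\bigr)$, whose identity component is the connected subgroup $(1-\tilde{\sigma}^*)(P_2)$. Dually, the compatibilities $\text{Nm}_{X/Y}\circ q_*=\tilde{q}_*\circ\widetilde{\text{Nm}}$ and $\pi_*\circ\text{Nm}_{\tilde{X}_s/X}=\tilde{q}_*\circ\widetilde{\text{Nm}}$ show that $K=\ker\bigl(q_*:P_1\to\text{Prym}(X/Y)\bigr)=\ker\bigl(\tilde{\pi}_*:P_2\to\text{Prym}(\tilde{Y}_s/Y)\bigr)$. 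Thus $K$ is connected if and only if the Tate cohomology $\ker\bigl((1+\tilde{\sigma}^*)|_{P_2}\bigr)/(1-\tilde{\sigma}^*)(P_2)$ vanishes.

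The main obstacle is precisely this vanishing of the component group, and this is where the ramification of $\tilde{\sigma}$ must enter. I would establish it through the duality criterion for connectedness of kernels of homomorphisms of abelian varieties: setting $\Phi=(\widetilde{\text{Nm}},\text{Nm}_{\tilde{X}_s/X})$, the kernel $K=\ker\Phi$ is connected if and only if the image of the dual homomorphism, namely $\tilde{\pi}^*\text{Pic}^0(\tilde{Y}_s)+q^*\text{Pic}^0(X)\subset\text{Pic}^0(\tilde{X}_s)$, is saturated (has torsion-free cokernel). Since $\tilde{\pi}^*\text{Pic}^0(\tilde{Y}_s)$ already accounts, up to isogeny, for the $\tilde{\sigma}^*$-invariant part, this reduces to the saturation of $q^*\text{Prym}(X/Y)$ inside $P_1=\text{Prym}(\tilde{X}_s/\tilde{Y}_s)$, equivalently to the connectedness of $\ker\bigl(q_*:P_1\to\text{Prym}(X/Y)\bigr)$. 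I expect this to be the delicate point, and I would prove it by a Mumford-type argument (in the spirit of \cite{NH}, \cite{BL}), using that the lifted involution $\tilde{\sigma}$ has a nonempty fixed locus, namely its ramification $\tilde{R}=q^{-1}(R)$ computed in Proposition \ref{descentram}, which forces the relevant component group to be trivial. In the étale case $\al P^+$ itself has two components (Theorem \ref{etalecase}), so there I would argue separately, invoking the even theta characteristic and $2$-torsion bookkeeping of that theorem to show that $\al Q^+$ meets only one component of $\al P^+$, whence $\al P^+\cap\al Q^+$ is again connected.
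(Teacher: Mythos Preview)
Your initial reduction is exactly the paper's: fixing a base point identifies $\al P^+\cap\al Q^+$ with $\al P\cap\al Q$, and you correctly view this as the kernel of the restriction $\vartheta=\widetilde{\text{Nm}}|_{\al Q}:\al Q\to\underline{\al Q}$, where $\underline{\al Q}=\text{Prym}(\tilde{Y}_s/Y)$. The duality criterion you invoke is also the right one in spirit: $\ker(\vartheta)$ is connected iff the dual $\hat\vartheta$ is injective. But from this point on your argument is only a plan. The reduction ``this reduces to the saturation of $q^*\text{Prym}(X/Y)$ inside $P_1$'' is not justified, and the crucial step --- proving the connectedness of $\ker(q_*:P_1\to\text{Prym}(X/Y))$ --- is left to an unspecified ``Mumford-type argument''. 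The bare fact that $\tilde\sigma$ has fixed points gives you connectedness of $P_1$ itself, but it does \emph{not} by itself force the component group of a further kernel inside $P_1$ to vanish; that requires a genuine computation.

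The paper carries out this computation concretely, and in a direction you did not take. It compares the two polarisation maps $\varphi_{\al Q}:\al Q\to\widehat{\al Q}$ and $\varphi_{\underline{\al Q}}:\underline{\al Q}\to\widehat{\underline{\al Q}}$ induced by the principal polarisations of the ambient Jacobians, whose types are known from \cite{BNR} to be $(1,\dots,1,r,\dots,r)$ with exactly $g_X$ (resp.\ $g_Y$) copies of $r$. With $\mu=\tilde\pi^*|_{\underline{\al Q}}$, the commutative square $\hat\vartheta\circ\varphi_{\underline{\al Q}}=\varphi_{\al Q}\circ\mu$ reduces the injectivity of $\hat\vartheta$ to a cardinality count: one must show $\text{card}(\ker(\varphi_{\al Q}\circ\mu))=\text{card}(\ker(\varphi_{\underline{\al Q}}))=r^{2g_Y}$. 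This is done by an explicit identification $\ker(\varphi_{\al Q}\circ\mu)=\tilde q^*J_Y[r]$, obtained via Kempf's Lemma and the injectivity of pullback along ramified spectral covers. The \'etale case is handled by the same polarisation argument, only with an extra $2$-torsion bookkeeping involving $\Delta$ and $\tilde\Delta$; your proposed alternative (showing $\al Q^+$ meets only one component of $\al P^+$ via theta characteristics) is plausible but not obviously easier, and you have not carried it out.
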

\begin{proof}
Fixing an element in $\al P^+\cap \al Q^+$ gives by tensor product  an isomorphism $$\al P\cap\al Q\stackrel{\sim}{\longrightarrow}\al P^+\cap \al Q^+.$$ 
So it is sufficient to prove the connectedness of $\al P\cap \al Q$ (recall that $\al P$ and $\al Q$ are the Prym varieties of $\tilde{\pi}:\tilde{X}_s \ra \tilde{Y}_s$ and $q:\tilde{X}_s \ra X$ respectively).  The norm map $\widetilde{\text{Nm}}:J_{\tilde{X}_s }\ra J_{\tilde{Y}_s}$ induces a homomorphism $$\vartheta:\al Q\ra  \underline{\al Q},$$
which is just the restriction of $\widetilde{\text{Nm}}$ to $\al Q$, here $\underline{\al Q}$ is the Prym variety of $\tilde{q}:\tilde{Y}_s\ra Y$. We have a commutative diagram 
$$\xymatrix{ \al Q \;\ar@{^{(}->}[r] & J_{\tilde{X}_s }\ar[r]^\sim  & \widehat{J_{\tilde{X}_s }} \ar@{->>}[r]  & \widehat{\al Q}  \\ \underline{\al Q}\;\ar@{^{(}->}[r]\ar[u]^{\mu} & J_{\tilde{Y}_s} \ar[r]^\sim \ar[u]^{\tilde{\pi}^*} & \widehat{J_{\tilde{Y}_s}}  \ar[u]^{\widehat{\widetilde{\text{Nm}}}} \ar@{->>}[r] &   \widehat{\underline{\al Q}}\ar[u]_{\hat{\vartheta}}, }$$ 
where $\widehat{\al Q}$  and  $\widehat{\underline{\al Q}}$ are the dual abelian varieties of $\al Q$ and $\underline{\al Q}$ respectivaly and $\mu:\underline{\al Q}\ra \al Q$ is the morphism defined by the factorization  $$\tilde{\pi}^*|_{\underline{\al Q}}:\underline{\al Q}\stackrel{\mu}{\longrightarrow}\al Q\hookrightarrow J_{\tilde{X}_s }.$$ 
We obtain the commutative diagram
\begin{align} \label{diagram1}
\xymatrix{ \al Q  \ar[r]^{\varphi_{\al Q}}  & \widehat{\al Q}  \\ \underline{\al Q}\ar[u]^{\mu} \ar[r]^{\varphi_{\underline{\al Q}}}  &   \widehat{\underline{\al Q}}\ar[u]_{\hat{\vartheta}}, }
\end{align}
where $\varphi_{\al Q}:\al Q\ra \widehat{\al Q}$ (resp. $\varphi_{\underline{\al Q}}:\underline{\al Q}\ra \widehat{\underline{\al Q}}$)  is the restriction of the principal polarisation of $J_{\tilde{X}_s }$ (resp. $J_{\tilde{Y}_s}$) to $\al Q$ (resp. $\underline{\al Q}$). By \cite{BNR}, Remark $2.7$, as the spectral covers are always ramified, the types of these two polarisations are $(1,\dots,1,\underbrace{r,\dots,r}_{g_X})$ and $(1,\dots,1,\underbrace{r,\dots,r}_{g_Y})$ respectively, hence the degree of these two restrictions is $r^{2g_X}$ and $r^{2g_Y}$ respectively.\\ 

Assume that $X\ra Y$ is ramified. Then $\mu$ is injective. By Diagram (\ref{diagram1}) it follows that $$\text{card(Ker}(\varphi_{\al Q}\circ\mu))=\text{card(Ker}(\varphi_{\underline{\al Q}}\circ \hat{\vartheta})).$$
It is easy to see that $\text{card(Ker}(\varphi_{\al Q}\circ\mu))=r^{2g_Y}$. Indeed if $L\in\text{Ker}(\varphi_{\al Q}\circ\mu)$, then $\tilde{\pi}^*L=q^*M$ for some $M\in J_X[r]$, but this implies that $M$ descends to $Y$ (here we use Kempf's Lemma (\ref{kempf}) to prove that if $q^*M$ descends to $\tilde{Y}_s$ then $M$ descends to $Y$), hence $L=\tilde{q}^*N$ for some $N\in J_Y[r]$, i.e. $\text{Ker}(\varphi_{\al Q}\circ\mu)=\tilde{q}^*J_Y[r]$, as $\tilde{q}^*$ is injective (c.f.  \cite{BNR}, remark $3.10$) this implies the result.\\ 
But we also have $\text{Ker}(\varphi_{\underline{\al Q}})=\tilde{q}^*J_Y[r]$, so  $\text{card(Ker}(\varphi_{\underline{\al Q}}))=r^{2g_Y}$. This proves that $\hat{\vartheta}$ is injective. By general theory of abelian varieties (see for example \cite{BL}, Proposition $2.4.3$), $\text{Ker}(f)$ and $\text{Ker }(\hat{f})$ have the same number of connected components for any surjective morphism $f:A\ra B$ between abelian varieties. Since $\vartheta$ is clearly surjective, it follows that $\text{Ker}(\vartheta)$ is connected, and by definition, the kernel of $\vartheta$ is $\al P\cap \al Q$. \\

If $X\ra Y$ is \'etale, then so is $\tilde{\pi}:\tilde{X}_s\ra \tilde{Y}_s$. In this case $\mu$ has degree $2$. Let  $L\in\text{Ker}(\varphi_{\al Q}\circ\mu)$, then as above $\tilde{\pi}^*L=q^*M$, for some $M\in J_X[r]$, hence $M$ descends to $Y$,  say $M=\pi^*N$, as $M^r=\al O_X$, we get $N^r=\al O_Y$ or $N^r=\Delta$, recall that $\Delta$ is the $2-$torsion line bundle attached to $X\ra Y$. Denote the set of $r^{th}$ roots of $\Delta$ by $T$.  It follows that $L=\tilde{q}^*N$ or $L=\tilde{q}^*N\otimes \tilde{\Delta}$ for $N\in J_Y[r]\cup T$, where $\tilde{\Delta}\in J_{\tilde{Y}_s}[2]$ is the line bundle attached to $\tilde{X}_s\ra \tilde{Y}_s$. Note that $\tilde{\Delta}=\tilde{q}^*\Delta$. Since $L\in\underline{\al Q}$, so 
\begin{itemize}
\item if $r$ is even, then multiplication by $\Delta$ is an involution of $J_Y[r]$ and $T$,  so $L\in \tilde{q}^*J_Y[r]$ (because $\tilde{q}^*T\cap\underline{\al Q}=\emptyset$). This implies that  $\text{Ker}(\varphi_{\al Q}\circ\mu)=\tilde{q}^*J_Y[r]$, hence $\text{card(Ker}(\varphi_{\al Q}\circ\mu))=r^{2g_Y}$. It follows that $\hat{\vartheta}$ is injective. So  $\al P\cap \al Q$ is connected.
\item if $r$ is odd, then multiplication by $\Delta$ is an isomorphism $J_Y[r]\cong T$, and since $q^*\Delta=\tilde{\Delta}$ we can assume that $N\in J_Y[r]$. As $\text{Nm}_{\tilde{Y}_s/Y}(\tilde{q}^*N\otimes\tilde{\Delta})=\Delta^r=\Delta$, then $\tilde{q}^*N\otimes\tilde{\Delta}\not\in \underline{\al Q}$. It follows $L\in \tilde{q}^*J_Y[r]$, hence $\text{card(Ker}(\varphi_{\al Q}\circ\mu))=r^{2g_Y}$, and so we deduce again the connectedness of $\al P\cap \al Q$.
\end{itemize}
\end{proof}
\begin{theo}
The pushforward map $$q_*:\al P^+\cap\al Q^+\dashrightarrow \al {SU}_X^{\sigma,+}(r)$$ is dominant. In particular $\al {SU}_X^{\sigma,+}(r)$ is irreducible.
\end{theo}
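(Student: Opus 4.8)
The plan is to combine Theorem \ref{main1} (the dominance of $q_*:\al P^+\dashrightarrow \al U_X^{\sigma,+}(r)$) with the determinant formula recalled in the preliminaries and the connectedness of $\al P^+\cap\al Q^+$ from Proposition \ref{NCCI}. First I would check the set-theoretic containment $q_*(\al P^+\cap\al Q^+)\subseteq\al{SU}_X^{\sigma,+}(r)$. For $L\in\al P^+$ the bundle $q_*L$ is stable $\sigma$-symmetric anti-invariant by the construction in the proof of Theorem \ref{main1}; the extra condition $L\in\al Q^+$ says $\text{Nm}_{\tilde{X}_s/X}(L)=\delta$, so the formula $\text{det}(q_*L)=\text{det}(q_*\al O_{\tilde{X}_s})\otimes\text{Nm}_{\tilde{X}_s/X}(L)=\delta^{-1}\otimes\delta=\al O_X$ gives trivial determinant. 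Hence $q_*L\in\al{SU}_X^{\sigma,+}(r)$ for general $L$.

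Next I would match dimensions. Using the isomorphism $\al P^+\cap\al Q^+\cong\al P\cap\al Q$ of Proposition \ref{NCCI} together with $\al P\cap\al Q=\text{Ker}(\vartheta)$ for the surjective map $\vartheta:\al Q\to\underline{\al Q}$, one gets $\dim(\al P^+\cap\al Q^+)=\dim\al Q-\dim\underline{\al Q}=\dim\al P^+-\dim P$. Since $\dim\al P^+=\dim\al U_X^{\sigma,+}(r)$ by Lemma \ref{dimprym} and $\al{SU}_X^{\sigma,+}(r)$ is the fibre of the surjective determinant map $\text{det}:\al U_X^{\sigma,+}(r)\to P$ over $\al O_X$, this equals $\dim\al{SU}_X^{\sigma,+}(r)$, so the two sides of the claimed dominant map have the same dimension.

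For dominance I would run the very-stable argument of Theorem \ref{main1} inside the determinant fibre. The nilpotent-cone argument of Theorem \ref{denseverystable} applies verbatim to $T^*\al{SU}_X^{\sigma,+}(r)$ (restriction of a Lagrangian to a symplectic subspace is isotropic, and the zero section forces equality of dimensions), so very stable anti-invariant bundles are dense in $\al{SU}_X^{\sigma,+}(r)$. For such an $E$ and general $s\in W^{\sigma,+}$ there is $\phi\in H^0(X,E\otimes\sigma^*E\otimes K_X)_+$ with $\sr H_E(\phi)=s$; by Proposition \ref{bnrprop} its spectral line bundle lies in $\al P^+$, and the triviality of $\text{det}(E)$ forces it into $\al Q^+$ as well. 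Thus every very stable $E\in\al{SU}_X^{\sigma,+}(r)$ lies in $q_*(\al P^+\cap\al Q^+)$, and since such $E$ are dense the map is dominant.

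Finally, irreducibility follows formally: $\al P^+\cap\al Q^+$ is connected by Proposition \ref{NCCI}, hence (being a coset of an abelian subvariety) irreducible, so its image is irreducible; being dense in $\al{SU}_X^{\sigma,+}(r)$ it forces the latter to be irreducible. I expect the dominance step to be the main obstacle: one must guarantee that the image fills the entire fibre $\al{SU}_X^{\sigma,+}(r)$ rather than a single connected component, and this is precisely where the connectedness of $\al P^+\cap\al Q^+$ (Proposition \ref{NCCI}) must be combined with the density of very stable bundles. The containment and dimension computations are routine once the determinant formula and Lemma \ref{dimprym} are in hand.
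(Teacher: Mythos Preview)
Your argument is correct, but the route to dominance differs from the paper's. You re-run the nilpotent-cone argument of Theorem \ref{denseverystable} on $T^*\al{SU}_X^{\sigma,+}(r)$ to get density of very stable bundles there, and then invoke Proposition \ref{bnrprop} for each such $E$. The paper instead avoids this repetition by a twisting trick: starting from the already-proved dominance of $q_*:\al P^+\dashrightarrow\al U_X^{\sigma,+}(r)$ (Theorems \ref{main1} and \ref{etalecase}), it notes that the map $\al{SU}_X^{\sigma,+}(r)\times P_0\to\al U_{X,0}^{\sigma,+}(r)$ is surjective, so a general $E\in\al{SU}_X^{\sigma,+}(r)$ arises as $(q_*L)\otimes\lambda=q_*(L\otimes q^*\lambda)$ for some $L\in\al P^+$ and some $r$-th root $\lambda\in P_0$ of $\det(q_*L)^{-1}$; since $\widetilde{\text{Nm}}(q^*\lambda)=\al O_{\tilde Y_s}$, the twisted line bundle stays in $\al P^+$, and the determinant formula forces it into $\al Q^+$.

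Both approaches are sound; the paper's is more economical because it feeds directly off the previously established dominance and requires no new Lagrangian argument, while yours is more self-contained but needs the observation that $\al{SU}_X^{\sigma,+}(r)$ is again a fixed locus (inside $\al{SU}_X(r)$) so that the isotropic/Lagrangian reasoning of Theorem \ref{denseverystable} indeed transfers. Your dimension count and the containment $q_*(\al P^+\cap\al Q^+)\subset\al{SU}_X^{\sigma,+}(r)$ are fine, and the final irreducibility step via Proposition \ref{NCCI} matches the paper exactly.
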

\begin{proof}
Let $P_0$ be the identity component of the Prym variety of $X\ra Y$, then it is clear that the map $$\al {SU}_X^{\sigma,+}(r)\times P_0\lra\al {U}_{X,0}^{\sigma,+}(r)$$ is surjective, where $\al {U}_{X,0}^{\sigma,+}(r)$ is the connected component of $\al {U}_X^{\sigma,+}(r)$ which is over $P_0$ by the determinant map. It follows by Theorems \ref{main1} and \ref{etalecase} (in the ramified and \'etale cases respectively) that for general $E \in \al {U}_{X,0}^{\sigma,+}(r)$, there exists $L\in \al P^+$ such that $q_*L=E$. Let $\lambda\in P_0$ be an $r^{th}$ root of $\text{det}(E)^{-1}$. It follows by the projection formula that $q_*(L\otimes q^*\lambda)=E\otimes \lambda\in \al{SU}^{\sigma,+}_X(r)$. Note that $L\otimes q^*\lambda\in\al P^+$ because $\widetilde{Nm}(q^*\lambda)=\al O_{\tilde{Y}_s}$.  Hence a general $E\in\al{SU}^{\sigma,+}_X(r)$ can be written as a direct image of some $L\in\al P^+$. But since $$\text{det}(q_*L)=\delta^{-1}\otimes \text{Nm}_{\tilde{X}_s/X}(L),$$ where $\delta=\text{det}(q_*\al O_{\tilde{X}_s })^{-1}$, we deduce that if $q_*L$ has trivial determinant then  $\text{Nm}_{\tilde{X}_s/X}(L)=\delta$, thus $L\in \al Q^+$. So we get a dominant rational map $$\al P^+\cap\al Q^+\lra \al{SU}_X^{\sigma,+}(r).$$ Now by Proposition \ref{NCCI}, $\al P^+\cap\al Q^+$ is connected. This ends the proof.
\end{proof}
\begin{rema} Remark that the map $$\al{SU}_X^{\sigma,+}(r)\times P\lra\al U_X^{\sigma,+}(r)$$
is surjective, unless $\pi:X\ra Y$ is \'etale and $r$ is even, for which its image is one connected component. Indeed, the ramified case is clear, so assume that $\pi$ is \'etale, then if $r$ is odd, the map $[r]:P\ra P$ is surjective, and its image is the identity component $P_0\subset P$ when $r$ is even. Now use Theorem \ref{etalecase} to deduce the result. \\
\end{rema}
\subsection{$\sigma-$alternating case}

\subsubsection{The ramified case}\label{ramifiedalterne} Suppose that $\pi:X\ra Y$ is ramified and $r$ is even.  Let $E$ be a $\sigma-$alternating  stable vector bundle, consider  the involution $f$ on the space \mbox{$H^0(X,E\otimes \sigma^*E\otimes K_X)$} associated to the linearisation $\ak t\otimes \eta$ on $E\otimes \sigma^*E\otimes K_X$, where $\ak t$ is the linearisation on $E\otimes \sigma^*E$ equals the transposition. \\ 
 Let $\phi\in H^0(X,E\otimes \sigma^*E\otimes K_X)_-$. Using the isomorphism $\psi:\sigma^*E\cong E^*$, we can see $\phi$ as a map $E\ra E\otimes K_X$ (in fact we just identify $\phi$ and $\phi\circ (\,^t\psi)$ to simplify the notations).  Then we have
\begin{lemm} The following diagram $$\xymatrix{\sigma^*\phi:&\sigma^*E\ar[r]&\sigma^*E\otimes\sigma^* K_X\ar[d]^{\psi\otimes \eta}\\ ^t\phi:&E^*\ar[r] \ar[u]^{\psi^{-1}} &E^*\otimes K_X,}$$ commutes.
\end{lemm}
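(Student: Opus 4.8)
The plan is to verify the commutativity of the square directly, as an equality of maps $\sigma^*E\to E^*\otimes K_X$; since the local sections $\sigma^*x$ (for $x$ a local section of $E$) generate $\sigma^*E$, it suffices to evaluate both composites on such sections. Throughout I will use the local description of the Higgs field obtained in the proof of Proposition \ref{Hitchinmapequivarience}: writing $\phi=\sum_k s_k\otimes\sigma^*(t_k)\otimes\alpha_k$ locally, the associated map $\phi\colon E\to E\otimes K_X$ is $\phi(y)=\sum_k\langle\psi(\sigma^*(t_k)),y\rangle\,s_k\otimes\alpha_k$.

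First I would compute the two composites on $\sigma^*x$. For the path through the top and right edges, $\sigma^*\phi(\sigma^*x)=\sum_k\sigma^*\langle\psi(\sigma^*(t_k)),x\rangle\,\sigma^*(s_k)\otimes\sigma^*(\alpha_k)$, so that
\[(\psi\otimes\eta)\circ\sigma^*\phi\,(\sigma^*x)=\sum_k\nu\sigma^*\langle\psi(\sigma^*(t_k)),x\rangle\;\psi(\sigma^*(s_k))\otimes\eta(\sigma^*(\alpha_k)).\]
For the bottom edge, the defining relation $\langle\,^t\phi(\mu),y\rangle=\langle\mu,\phi(y)\rangle$ of the transpose gives
\[\,^t\phi\circ\psi\,(\sigma^*x)=\sum_k\langle\psi(\sigma^*(x)),s_k\rangle\;\psi(\sigma^*(t_k))\otimes\alpha_k.\]

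Next I would bring the second expression into the shape of the first using the two inputs at my disposal. Because $\psi$ is $\sigma$-alternating, the pairing identity from Proposition \ref{Hitchinmapequivarience}(2), namely $\langle\psi(\sigma^*(x)),s_k\rangle=-\nu\sigma^*\langle\psi(\sigma^*(s_k)),x\rangle$, rewrites the bottom composite as $-\sum_k\nu\sigma^*\langle\psi(\sigma^*(s_k)),x\rangle\,\psi(\sigma^*(t_k))\otimes\alpha_k$. At this stage both composites are obtained by one and the same operation — contract the $E$-factor against $x$ through $\psi$ and apply $\psi$ to the $\sigma^*E$-factor, leaving the $K_X$-factor untouched — applied respectively to
\[f(\phi)=\sum_k t_k\otimes\sigma^*(s_k)\otimes\eta(\sigma^*(\alpha_k))\qquad\text{and to}\qquad -\phi,\]
where $f$ is the involution attached to the linearisation $\ak{t}\otimes\eta$ on $E\otimes\sigma^*E\otimes K_X$. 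Since $\phi\in H^0(X,E\otimes\sigma^*E\otimes K_X)_-$ means exactly $f(\phi)=-\phi$, the two composites coincide and the square commutes.

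The only delicate point is the bookkeeping of the pullbacks $\sigma^*$ and of the two signs: the diagram commutes, rather than anti-commutes, precisely because the minus sign produced by the $\sigma$-alternating relation $\sigma^*\psi=-\,^t\psi$ cancels against the minus sign coming from $\phi$ lying in the $(-1)$-eigenspace of $f$. Conceptually, the computation merely records that, after the identifications made by $\psi$ and $\eta$, the involution $f$ carries the Higgs field $\phi$ to $-\,^t(\sigma^*\phi)$, so that the condition $f(\phi)=-\phi$ is equivalent to the asserted commutativity.
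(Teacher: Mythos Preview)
Your proof is correct and follows essentially the same route as the paper. The paper evaluates the composite $(\psi\otimes\eta)\circ(\sigma^*\phi)\circ\psi^{-1}$ on a local section $v$ of $E^*$, identifies the result as $-\,^t f(\phi)(v)$, and concludes using $f(\phi)=-\phi$; you evaluate both sides on $\sigma^*x\in\sigma^*E$ (equivalently, at $v=\psi(\sigma^*x)$) and reach the same identity via the pairing form of the $\sigma$-alternating relation, with the two signs cancelling exactly as you describe.
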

\begin{proof} Write (locally)  $$\phi=\sum_ks_k\otimes\sigma^*t_k\otimes\alpha_k,$$ and let $v$ be a local section of $E^*$, then we have 

\begin{align*} 
(\psi\otimes\eta)\circ (\sigma^*\phi)\circ \psi^{-1}(v)&=(\psi\otimes\eta)\left(\sum_k\gen{(\sigma^*\psi)(t_k),\psi^{-1}(v)}\sigma^*s_k\otimes\sigma^*\alpha_k\right)\\ 
&=(\psi\otimes\eta)\left(\sum_k-\gen{(\,^t\psi)(t_k),\psi^{-1}(v)}\sigma^*s_k\otimes\sigma^*\alpha_k\right)\\
&=-(\psi\otimes\eta)\left(\sum_k\gen{t_k,v}\sigma^*s_k\otimes\sigma^*\alpha_k\right) \\
&=-\sum_k\gen{t_k,v}\psi(\sigma^*s_k)\otimes\eta(\sigma^*\alpha_k)\\
&=-\left(\sum_kt_k\otimes\psi(\sigma^*s_k)\otimes\eta(\sigma^*\alpha_k)\right)(v)\\
&= - \,^tf(\phi)(v)\\
&=\,^t\phi(v).
\end{align*}
Thus 
$$^t\phi=(\psi\otimes\eta)\circ (\sigma^*\phi)\circ \psi^{-1}.$$
\end{proof}

In particular, over a ramification point $p\in R$,  we have 
\begin{equation} \label{phip}
^t\phi_p=\psi_p\cdot \phi_p\cdot \psi_p^{-1}.
\end{equation}  
\begin{lemm}\label{characteristicpolinomialisquare}
Let $J_r$ be the $r\times r$ matrix $$\begin{pmatrix}0&I_{r/2}\\-I_{r/2}& 0 \end{pmatrix},$$
where $I_{r/2}$ is the identity matrix of size $r/2$. Let $\al A$ be the set of matrices $A$ such that $^tA=J_rAJ_r^{-1}.$ Then the characteristic polynomial on $\al A$ that sends  $A$ to $\chi(A)$ is a square of a polynomial in the coefficients of $A$. In particular  $\text{det}(A)$  is a square too.
\end{lemm}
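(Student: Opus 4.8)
The plan is to reinterpret the defining relation of $\al A$ as a skew-symmetry condition and then to invoke the classical fact that the determinant of an even-sized skew-symmetric matrix is the square of its Pfaffian. The whole argument rests on the elementary identities $J_r^2=-I_r$ (so that $J_r^{-1}=-J_r={}^tJ_r$) and $\text{det}(J_r)=1$; the latter follows because the eigenvalues of $J_r$ are $\pm i$, each with multiplicity $r/2$, whence $\text{det}(J_r)=(i\cdot(-i))^{r/2}=1$.

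The key observation I would establish first is that for \emph{every} $B\in\al A$ the matrix $J_rB$ is skew-symmetric. Indeed, using ${}^tB=J_rBJ_r^{-1}$ together with ${}^tJ_r=J_r^{-1}$ one computes ${}^t(J_rB)={}^tB\,{}^tJ_r=J_rBJ_r^{-1}J_r^{-1}=J_rBJ_r^{-2}=-J_rB$, since $J_r^{-2}=(J_r^2)^{-1}=-I_r$. Next I would note that $\al A$, viewed over the polynomial ring $\bb C[t]$, is stable under $A\mapsto tI_r-A$, because ${}^t(tI_r-A)=tI_r-{}^tA=J_r(tI_r-A)J_r^{-1}$. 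Combining these two facts, $J_r(tI_r-A)=tJ_r-J_rA$ is an $r\times r$ skew-symmetric matrix whose entries are polynomials (of degree at most one in $t$) in $t$ and in the entries of $A$.

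To conclude, since $\text{det}(J_r)=1$ we have $\chi(A)=\text{det}(tI_r-A)=\text{det}\big(J_r(tI_r-A)\big)$, and the determinant of an even-dimensional skew-symmetric matrix equals the square of its Pfaffian. Hence $\chi(A)=\text{Pf}\big(J_r(tI_r-A)\big)^2$, where the Pfaffian is a polynomial in $t$ and in the entries of $A$; this is exactly the claimed expression of the characteristic polynomial as a square. Specialising to $t=0$ and using that $r$ is even (so that $\text{det}(-A)=\text{det}(A)$) gives $\text{det}(A)=\text{Pf}(-J_rA)^2$, which settles the final assertion.

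The one point deserving care is that the identity $\text{det}=\text{Pf}^2$ is an identity of \emph{polynomials} in the matrix entries, so the square root it provides is genuinely a polynomial rather than merely a square value for each numerical $A$; this is standard but is what makes the statement about ``a polynomial in the coefficients of $A$'' meaningful. Beyond that, the only place one might slip is the bookkeeping with the sign conventions for $J_r$ (the transition between $J_rAJ_r^{-1}$, $-J_rAJ_r$, and the skew-symmetry of $J_rA$), so I would keep those few computations explicit; everything else is immediate.
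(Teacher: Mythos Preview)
Your proof is correct and is essentially the same as the paper's: both show that $tI_r-A$ (the paper writes $A-\lambda I_r$) again satisfies the defining relation of $\al A$, multiply by $J_r$ to obtain a skew-symmetric matrix, and conclude via $\det=\mathrm{Pf}^2$. Your version is simply more explicit about the auxiliary identities $J_r^{-1}={}^tJ_r=-J_r$ and $\det(J_r)=1$ and about the polynomial nature of the Pfaffian identity, which the paper takes for granted.
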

\begin{proof}
Let $B=A-\lambda I_r$, then $$^tB=\,^tA-\lambda I_r=J_r(A-\lambda I_r)J_r^{-1}=J_rBJ_r^{-1},$$ it follows that $^t(J_rB)=-J_rB,$ hence  $J_rB$ is anti-symmetric matrix,  thus $$\chi(A)=\text{det}(B)=\text{det}(J_rB)=\text{pf}(J_rB)^2,$$
where $\text{pf}(M)$ denote the Pfaffian of the anti-symmetric matrix $M$.
\end{proof}
For $s\in W^{\sigma,+}$ and $p\in X$, fix an isomorphism $(K_X)_p\cong \bb C$ and let 
\begin{equation*} P(x,p)= x^r+s_1(p)x^{r-1}+\cdots+s_r(p)\in\bb C[x].
\end{equation*}
Define  $$W^{\sigma,-}=\{s\in W^{\sigma,+}\,|\; P(x,p) \text{ is square for all } p\in R \}\subset W^{\sigma,+}.$$

\begin{prop}\label{wsigmaalternating}
The Hitchin morphism induces a map $$T^*\al U_X^{\sigma,-}(r)\lra W^{\sigma,-}.$$ Moreover, for each $s\in W^{\sigma,-}$ the associated spectral curve $\tilde{X}_s $ is singular and we have $\tilde{R}=q^{-1}(R)\subset \tilde{S}=Ram(\tilde{X}_s/X)$. And for general $s\in W^{\sigma,-}$, the singular locus of $\tilde{X}_s $ is exactly $\tilde{R}$ 
\end{prop}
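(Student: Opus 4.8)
The plan is to treat the several assertions --- that $\sr H$ lands in $W^{\sigma,-}$, that every $\tilde X_s$ is singular along $\tilde R\subset\tilde S$, and that generically the singular locus is exactly $\tilde R$ --- separately, reserving the genericity statement for last.

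First, for the induced map $T^*\al U_X^{\sigma,-}(r)\to W^{\sigma,-}$: by Proposition \ref{Hitchinmapequivarience}(2) the Hitchin morphism already sends $H^0(X,E\otimes\sigma^*E\otimes K_X)_-$ into $W^{\sigma,+}$, so only the square condition at the ramification points remains. I would fix $(E,\phi)\in T^*\al U_X^{\sigma,-}(r)$ and $p\in R$; since $E$ is $\sigma$-alternating, $\psi_p$ is a symplectic form, so I may pick a frame near $p$ in which $\psi_p=J_r$, and then \eqref{phip} reads $^t\phi_p=J_r\phi_pJ_r^{-1}$, i.e. $\phi_p\in\al A$ in the notation of Lemma \ref{characteristicpolinomialisquare}. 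As $r$ is even, $P(x,p)=\det(xI_r-\phi_p)=\chi(\phi_p)$ is exactly the characteristic polynomial of $\phi_p$, which by Lemma \ref{characteristicpolinomialisquare} equals the square of the Pfaffian of $J_r(\phi_p-xI_r)$. Hence $\sr H_E(\phi)\in W^{\sigma,-}$.

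Second, I would prove $\tilde X_s$ singular and $\tilde R=q^{-1}(R)\subset\tilde S$ for every $s\in W^{\sigma,-}$ via the Jacobian criterion of Lemma \ref{smoothlemma}, the input being the local parity of the $s_i$. Near $p\in R$ choose a coordinate $t$ with $\sigma(t)=-t$, so $u=t^2$ is a coordinate on $Y$. Because $s_i\in H^0(X,K_X^i)_+\cong H^0(Y,(K_Y\otimes\Delta)^i)$ (Remark \ref{propersubspaces}), $s_i$ is a pullback; writing it out with $\pi^*(du)=2t\,dt$ and the local generator of $\Delta$ gives $s_i=2^i g_i(t^2)(dt)^i$, so $f_i(t):=2^i g_i(t^2)$ is even and $f_i'(0)=0$. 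With $P(x,t)=x^r+f_1(t)x^{r-1}+\cdots+f_r(t)$, the definition of $W^{\sigma,-}$ forces $P(x,p)=Q(x)^2$ with $\deg Q=r/2\geqslant1$, so every root $\lambda$ of $Q$ is a multiple root of $P(\cdot,0)$; there $\partial_xP(\lambda,0)=0$, while $\partial_tP(\lambda,0)=\sum_i f_i'(0)\lambda^{r-i}=0$. By the criterion of Lemma \ref{smoothlemma}, $(\lambda,0)$ is singular, and as every point of $q^{-1}(p)$ is a multiple root it is ramified over $X$; this proves both the singularity of $\tilde X_s$ and the inclusion $\tilde R\subset\tilde S$, for all $s\in W^{\sigma,-}$.

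Third, for the generic statement I would pass to the quotient. By Remark \ref{Ytildespectral}, $\tilde Y_s$ is the spectral curve of $Y$ attached to the descended sections for $L=K_Y\otimes\Delta$, and since $s_i=\pi^*\bar s_i$ one has $\tilde X_s\cong X\times_Y\tilde Y_s$. Away from the branch points $\pi(R)$ the map $\pi$ is étale, so $\tilde X_s\to\tilde Y_s$ is étale there and $\tilde X_s$ is smooth wherever $\tilde Y_s$ is. Over a branch point the square condition forces the fibre polynomial of $\tilde Y_s$ to be $\bar Q^2$; for general $s$ the roots of $\bar Q$ are simple and $\partial_u\bar P\ne0$ at them (this derivative involves the first jets of the $\bar s_i$, which the value-conditions defining $W^{\sigma,-}$ leave free), so $\tilde Y_s$ is smooth there with simple ramification over $Y$. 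Then at each point of $\tilde R=q^{-1}(R)$ both $X\to Y$ and $\tilde Y_s\to Y$ ramify simply, so the fibre product acquires an ordinary node and nothing worse; combined with the openness of smoothness (Lemma \ref{smoothlemma}) this yields $\mathrm{Sing}(\tilde X_s)=\tilde R$ for general $s$.

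The hard part will be this last paragraph, namely justifying that for general $s\in W^{\sigma,-}$ the curve $\tilde Y_s$ is smooth away from $\pi(R)$ and acquires only simple ramification over $\pi(R)$. Because the low-degree sections $s_1,\dots$ need not separate all $2n$ points of $R$, the evaluation map $W^{\sigma,+}\to\prod_{p\in R}\bb C^r$ need not be surjective, so I cannot simply prescribe the fibre polynomials; I must instead exhibit at least one $s\in W^{\sigma,-}$ realising the generic picture (equivalently, show the relevant open conditions are nonempty) and check that the square locus meets the smooth locus. I expect to handle this by a Bertini-type argument inside the constrained linear system, using that the higher-degree spaces $H^0(Y,L^i)$ supply enough freedom in the first jets at the $\bar p\in\pi(R)$ to force $\partial_u\bar P\ne0$ while keeping $\tilde Y_s$ smooth elsewhere.
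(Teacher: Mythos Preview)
Your first two parts are correct and essentially match the paper. For the singularity along $\tilde R$ you use the Jacobian criterion directly (the evenness of the $f_i$ in $t$ kills $\partial_tP$, the square condition kills $\partial_xP$); the paper instead argues that if $\tilde X_s$ were smooth then the fixed locus of $\tilde\sigma$ would be reduced, whereas $q^{-1}(R)$ visibly is not because each fibre polynomial is a square. Both are fine, and your route is arguably more transparent.

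For the genericity statement your fibre-product analysis via $\tilde X_s\cong X\times_Y\tilde Y_s$ is correct and would eventually work, but it is considerably heavier than what the paper does. The paper simply exhibits one explicit element of $W^{\sigma,-}$ for which the singular locus is exactly $\tilde R$, namely $s=(0,\dots,0,\pi^*s_r)$ with $s_r$ a general section of $K_Y^r\otimes\Delta^{r-1}$: the spectral curve is then $x^r+\pi^*s_r=0$, it lies in $W^{\sigma,-}$ since $\pi^*s_r$ vanishes along $R$ (so $P(x,p)=x^r$ is a square), and a direct application of the Jacobian criterion shows the only singularities sit over the points where $\pi^*s_r$ has a multiple zero, which for general $s_r$ is exactly $R$. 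This bypasses your concern about surjectivity of the evaluation map and the Bertini-type argument entirely. You correctly identified that the crux is producing one good $s$; the point is that a very simple one is available, so there is no need to control the first jets of all the $\bar s_i$ simultaneously.
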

\begin{proof}The first part follows directly from equation (\ref{phip}) and Lemma \ref{characteristicpolinomialisquare}.\\
using Lemma \ref{smoothnesslemma} we deduce that the fixed locus of an involution on smooth curve is smooth, hence reduced (if it is not empty). This implies that for any $s\in W^{\sigma,-}$, the associated spectral curve $\tilde{X}_s$ is singular at every point of $\tilde{R}$.
To see that these are the only singularities for general $s\in W^{\sigma,-}$, it is sufficient to show that the set of spectral data with such property is not empty in $W^{\sigma,-}$. For this, just take the spectral data  $s=(0,\cdots,0,\pi^*s_r)\in W^{\sigma,-}$ , where $s_r$ is a general section in $H^0(Y,K_Y^r\otimes \Delta^{r-1})$.  

 To see that $\tilde{R}\subset \tilde{S}$, recall that  by \cite{BNR}, Remark $3.3$, the discriminant of the polynomial $$x^r+q^*s_1x^{r-1}+\cdots+q^*s_r$$ gives the ramification divisor  $\tilde{S}=Ram(\tilde{X}_s /X)$. In other words, a point $a\in\tilde{X}_s $ (over $p\in X$) is in $\tilde{S}$ if and only if $a$ is a multiple root of $P(x,p)$. Hence  we deduce $\tilde{R}\subset \tilde{S}$. 
\end{proof}

It is clear that $W^{\sigma,-}$ is not a linear subspace of $W^{\sigma,+}$. So this system is not integrable in the sense of Hitchin \cite{NH}. \\

Moreover, for general $s\in W^{\sigma,-}$, over each $p\in R$, the polynomial $P(x,p)$ is a square of a polynomial with \emph{simple roots}. Thus the singularities are ordinary double points. The condition that the polynomial $P(x,p)$ is a square of a polynomial with simple roots is given by $r/2$ equations, hence it decreases the dimension of $W^{\sigma,+}$ by $r/2$, for each $p\in R$. More precisly, if $D=\bb C[x]_{\leqslant r}$ is the vector space of polynomials of degree at most $r$, and $\al S\subset D$ is the locus of square polynomials. Then $W^{\sigma,-}$ can be defined as the pullback of $\bigoplus_{p\in R}\al S$ via the map  $$W^{\sigma,+}\lra \bigoplus_{p\in R}D,$$ which sends $s\in W^{\sigma,+}$ to the polynomials $(P(x,p))_{p\in R}.$ Since this map is a surjective linear map and because $\text{codim}_D(\al S)=r/2$ we deduce
\begin{align*}
\text{dim}(W^{\sigma,-})&=\text{dim}(W^{\sigma,+})-2n\frac{r}{2}\\&=\dfrac{r^2}{2}(g_X-1)+\dfrac{nr}{2}-nr\\&=\dfrac{r^2}{2}(g_X-1)-\dfrac{nr}{2}\\ &=\text{dim}(\al U_X^{\sigma,-}(r)).
\end{align*}

Let $s\in W^{\sigma,-}$ be general such that the singular locus of  $\tilde{X}_s $ is $\tilde{R}$ and all singularities are nodes. Denote by $\ak q:\hat{X}_s\ra \tilde{X}_s $ its normalisation, then the genus  $g_{\hat{X}_s}$ of $\hat{X}_s$ is given by
 \begin{align} \label{genusnormalization}
 g_{\hat{X}_s}&= (\text{arithmetic genus of } \tilde{X}_s ) -  (\text{number of singular points}) \nonumber \\ 
            &= r^2(g_X-1)+1- \frac{1}{2} \text{deg}(\tilde{R}) \\
            &= r^2(g_X-1)+1 - rn. \nonumber
 \end{align}
 
 \begin{lemm}
 	Let $\hat{\sigma}$ the lifting of the involution $\tilde{\sigma}$ to $\hat{X}_s$. Then $\hat{\sigma}$ has no fixed points ($\hat{\sigma}$ interchanges the two points over each singular point). Moreover, we have $$\hat{X}_s/\hat{\sigma}\cong \tilde{X}_s/\tilde{\sigma}=\tilde{Y}_s.$$ 
 \end{lemm}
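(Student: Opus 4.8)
The plan is to reduce both assertions to a local study of $\tilde{\sigma}$ near the nodes of $\tilde{X}_s$ and to show that $\tilde{\sigma}$ \emph{interchanges} the two branches at each node. Recall from Proposition \ref{descentram} that the fixed locus of $\tilde{\sigma}$ is exactly $\tilde{R}=q^{-1}(R)$, and that for general $s\in W^{\sigma,-}$ these points are precisely the nodes of $\tilde{X}_s$ (Proposition \ref{wsigmaalternating}). Since the normalisation map $\ak q:\hat{X}_s\ra\tilde{X}_s$ is $\tilde{\sigma}$-equivariant (this is how $\hat{\sigma}$ is defined), any fixed point of $\hat{\sigma}$ must lie over a fixed point of $\tilde{\sigma}$, hence over a node; over the smooth locus $\tilde{\sigma}$ acts freely, so it suffices to analyse the two points of $\hat{X}_s$ lying above each node.

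Next I would carry out the local computation. Fix $p\in R$ and a local parameter $t$ on $X$ near $p$ with $\sigma^*t=-t$, so that $u=t^2$ is a local parameter on $Y$ near $\pi(p)$. Because $s\in W^{\sigma,-}\subset W^{\sigma,+}$, each $s_i$ descends, $s_i=\pi^*\bar{s}_i$, so the local equation of $\tilde{X}_s$ has the form $P(x,t)=\bar{P}(x,t^2)$, i.e. it is \emph{even} in $t$. At the double root $x_0$ of $P(\cdot,p)$ one has $\partial_xP=0$, while $\partial_tP=2t\,\partial_u\bar{P}$ vanishes at $t=0$, so the point is singular; expanding gives the local model $A(x-x_0)^2+Bt^2=0$ with $A,B\neq 0$ (a node, with no cross term precisely because $P$ is even in $t$). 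The involution $\bar{\sigma}$ of $\bb S$ acts here by $(t,x)\mapsto(-t,x)$, since $t\mapsto -t$ on the base and the fibre coordinate $x$ is invariant for the positive linearisation (as recorded in the proof of Proposition \ref{descentram}). Writing the two branches as $x-x_0=\pm c\,t$, the map $(t,x)\mapsto(-t,x)$ exchanges them; hence $\hat{\sigma}$ swaps the two points of $\hat{X}_s$ over the node and has no fixed point. This proves the first assertion.

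For the isomorphism $\hat{X}_s/\hat{\sigma}\cong\tilde{Y}_s$, I would use that $\ak q$ descends to a morphism $\bar{\ak q}:\hat{X}_s/\hat{\sigma}\ra \tilde{X}_s/\tilde{\sigma}=\tilde{Y}_s$. A degree count shows it is birational: the composite $\hat{X}_s\ra\tilde{X}_s\ra\tilde{Y}_s$ has degree $2$, while $\hat{X}_s\ra \hat{X}_s/\hat{\sigma}$ has degree $2$ as well (the action being free), so $\bar{\ak q}$ has degree $1$. Both sides are smooth projective curves: $\hat{X}_s/\hat{\sigma}$ is smooth because $\hat{\sigma}$ acts freely on the smooth curve $\hat{X}_s$, and $\tilde{Y}_s$ is smooth — either directly, since $\tilde{Y}_s$ is the spectral curve over $Y$ attached to $L=K_Y\otimes\Delta$ (Remark \ref{Ytildespectral}) and general $s$ makes $\partial_u\bar{P}\neq 0$ at the double-root points, or, more conceptually, by computing that the ring of $\tilde{\sigma}$-invariants of the local node $A\xi^2+Bt^2=0$ with branches swapped is the smooth local ring of the parameter $t$ on one branch. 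A birational morphism between smooth projective curves is an isomorphism, which gives the claim.

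The main obstacle is the local node analysis, and specifically the vanishing of the cross term: one must ensure that $\tilde{\sigma}$ swaps the branches rather than preserving each of them (which would instead produce a fixed point of $\hat{\sigma}$ and a node of $\tilde{Y}_s$). What makes this work is the evenness $P(x,t)=\bar{P}(x,t^2)$, forced by the descent of the $s_i$, together with the fact that $\bar{\sigma}$ fixes the fibre coordinate $x$; these two facts pin down the local model as $A(x-x_0)^2+Bt^2=0$ with $\bar{\sigma}:(t,x)\mapsto(-t,x)$, for which the branch interchange is immediate.
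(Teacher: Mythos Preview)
Your proof is correct and follows essentially the same route as the paper's: a local computation showing that near each node the equation has the form $A(x-x_0)^2+Bt^2=0$ with $\tilde{\sigma}$ acting by $(t,x)\mapsto(-t,x)$, hence swapping the two branches; then the observation that the induced map $\hat{X}_s/\hat{\sigma}\to\tilde{Y}_s$ is a bijection (equivalently, degree one) between smooth projective curves, hence an isomorphism. Your exposition is in fact a bit more explicit than the paper's (you spell out the evenness $P(x,t)=\bar P(x,t^2)$ to justify the vanishing of the cross term, and you phrase the final step as a degree count rather than a bare bijectivity claim), but the argument is the same.
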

\begin{proof} If $t$ is local parameter near $p\in R$ and $x$ is a local parameter induced by the tautological section $x$ of the pull back of $K_X$ to $|K_X|$  in a neighbourhood of a ramification point $\lambda\in \tilde{R}$ over $p$, then by definition,  $\tilde{\sigma}$  sends $t\ra-t$ and $x\ra x$, and we can write the equation of $\tilde{X}_s $ near $\lambda$ as $$x^2+t^2+(\text{higher terms}).$$  Then it is clear that $\tilde{\sigma}$ interchanges the two tangent lines at this singular point.  Thus it interchanges the two branches of $\tilde{X}_s$ over  $\lambda$.
$$ \begin{tikzpicture} 
\draw [black] plot [smooth] coordinates {(1.5,1) (0,0) (-1.2,-0.6) (-1.55,0) (-1.2,0.6) (0,0) (1.5,-1)};
\foreach \Point in {(0,0)}{ \node at \Point {\textbullet};}
\draw[|->] (0,-1) -- (0,-1.9);
\draw[->] (0,0) -- (1.8,0);
\draw[->] (0,0) -- (0,1.2);
\draw[->] (-.48,-.39) -- (-.5,-.38);
\draw[->] (.48,-.41) -- (.5,-.4);
\draw [black] plot [smooth] coordinates {(-1.5,-2.5) (1.5,-2.5)};
\draw [-] plot [smooth] coordinates {(-.5,-0.38) (0,-.5) (0.5,-0.4)};
\foreach \Point in {(0,-2.5)}{ \node at \Point {\textbullet};}
\node (A) at (0, -0.7) {$\tilde{\sigma}$};
\node (B) at (-1.8, -2.5) {$X$};
\node (C) at (-1.8, 0) {$\tilde{X}_s $};
\node (D) at (0, -.22) {$\lambda$};
\node (p) at (0, -2.8) {$p$};
\node (x) at (0, 1.5) {$x$};
\node (t) at (2, 0) {$t$};
\end{tikzpicture}  $$

Now $\ak{q}$ induces a map $\hat{X}_s/\hat{\sigma}\lra \tilde{X}_s/\tilde{\sigma}$ which is an isomorphism outside the branch points of $\tilde{X}_s\ra \tilde{Y}_s$. But we see also that it is a one-to-one also over this locus. Since $\tilde{X}_s/\tilde{\sigma}$ is smooth (this can be seen locally using the equation of $\tilde{X}_s$), we deduce that this bijection is an isomorphism. 
\end{proof}
Let $\hat{\pi}:\hat{X}_s\ra \tilde{Y}_s$, and $\al P=\text{Prym}(\hat{X}_s/\tilde{Y}_s)$, then we have
\begin{align*}
\text{dim}(\al P)&= g_{\tilde{Y}_s}-1   \;\;\;\;\;\;\;\;(\hat{\pi} \text{ is \'etale})\\
                 &= \dfrac{1}{2}(g_{\hat{X}_s}-1) \;\;\;\;\;\; \text{ (Riemann-Roch) }\\ 
                 &= \dfrac{r^2}{2}(g_{X}-1)-\dfrac{rn}{2} \;\;\;\;\;\text{(by formula (\ref{genusnormalization}))}  \\
                 &= \text{dim}(\al U_X^{\sigma,-}).
\end{align*}

Recall that we denoted by $\tilde{q}:\tilde{Y}_s\ra Y$, $S=Ram(\tilde{Y}_s/Y)$. Let $\hat{\Delta}=\text{det}(\hat{\pi}_*\al O_{\hat{X}_s})^{-1}$ and $\hat{S}=Ram(\hat{X}_s/X)$. The line bundle $\al O(\hat{S})$ descends to $\tilde{Y}_s$, and we have $$\al O(\hat{S})=\ak q^*\al O(\tilde{S}-\tilde{R})=\hat{\pi}^*(\al O(S)\otimes \tilde{q}^*\Delta^{-1}),$$ this induces a linearisation on $\al O(\hat{S})$, which we call \emph{positive}, and we fix it hereafter.\\
 The line bundles $L$ on $\hat{X}_s$ such that $$\hat{\sigma}^*L\cong L^{-1}(\hat{S}),$$ with a $\hat{\sigma}-$alternating isomorphism (see Lemma \ref{normlinebundle}) are those with norm (with respect to $\hat{\pi}$) equals $\al O(S)\otimes({q'}^*\Delta^{-1}) \otimes \hat{\Delta}$. We denote this subvariety of line bundles by $\hat{\al P}$. 
Denote by $\hat{q}$ the map $\hat{X}_s\ra X$. We have 
\begin{theo}\label{main2}
Suppose that $X\ra Y$ is ramified. For general $s\in W^{\sigma,-}$, the pushforward map $$\hat{q}_*:\hat{\al P}\dashrightarrow \al U_X^{\sigma,-}(r)$$
is dominant. In particular $\al U_X^{\sigma,-}(r)$ has two irreducible components.
\end{theo}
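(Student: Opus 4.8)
The plan is to mirror the proof of Theorem \ref{main1}, the essential new feature being that the spectral curve $\tilde X_s$ is now nodal and the relevant spectral data live on its normalisation $\hat X_s$. First I would check that the pushforward is well defined, i.e. that $\hat q_*L$ is a $\sigma$-alternating anti-invariant bundle for every $L\in\hat{\al P}$. Writing $\hat q=q\circ\ak q$, the duality for finite flat morphisms (\cite{HA}, Ex. III.6.10) gives $(\hat q_*L)^*\cong \hat q_*\big(L^{-1}(\hat S)\big)$, so the defining relation $\hat\sigma^*L\cong L^{-1}(\hat S)$ of $\hat{\al P}$ yields $\sigma^*\hat q_*L\cong \hat q_*(\hat\sigma^*L)\cong(\hat q_*L)^*$. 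The point that distinguishes this case from Theorem \ref{main1} is the \emph{alternating} nature of the resulting isomorphism $\psi$: it is assembled from the pairing $\langle\,,\,\rangle\colon L\otimes\hat\sigma^*L\to\al O(\hat S)$, which is $\hat\sigma$-alternating by the definition of $\hat{\al P}$ together with Lemma \ref{normlinebundle}, and from the canonical ($\hat\sigma$-invariant, positive) section of $\al O(\hat S)$ coming from the derivative of $\hat q$. Tracking signs exactly as in Theorem \ref{main1}, the combination \emph{alternating} $\times$ \emph{invariant} shows that $\psi$ is $\sigma$-alternating.

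Next I would prove dominance. By Theorem \ref{denseverystable} the very stable bundles are dense in $\al U_X^{\sigma,-}(r)$; for such $E$ the Hitchin map $\sr H_E\colon H^0(X,E\otimes\sigma^*E\otimes K_X)_-\to W^{\sigma,-}$ has $\sr H_E^{-1}(0)=\{0\}$, and since its source and target both have dimension $\dim\al U_X^{\sigma,-}(r)$ (Proposition \ref{wsigmaalternating} and the dimension count preceding the theorem) it is dominant. Hence $\Pi$ is dominant and, for general $s\in W^{\sigma,-}$, the fibre $\sr H^{-1}(s)$ dominates $\al U_X^{\sigma,-}(r)$. It remains to identify this fibre with $\hat{\al P}$. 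By Proposition \ref{bnrprop} a pair $(E,\phi)\in\sr H^{-1}(s)$ corresponds to a torsion-free rank-one sheaf on the nodal curve $\tilde X_s$; running the dual BNR sequence (\ref{seq1})--(\ref{seq2}) with the alternating relation (\ref{phip}) in place of the symmetric one shows that the induced self-duality is $\tilde\sigma$-alternating over the nodes $\tilde R$, which are exactly the $\tilde\sigma$-fixed points. Since an alternating form on a one-dimensional stalk vanishes, the sheaf cannot be locally free at the nodes; it must be the direct image $\ak q_*L$ of a line bundle $L$ on $\hat X_s$ with $\hat\sigma^*L\cong L^{-1}(\hat S)$ via an alternating isomorphism, i.e. $L\in\hat{\al P}$. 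Thus $\sr H^{-1}(s)$ is an open subset of $\hat{\al P}$ and $\hat q_*\colon\hat{\al P}\dashrightarrow\al U_X^{\sigma,-}(r)$ is dominant.

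Finally, for the component count I would use that $\hat\sigma$ acts freely on $\hat X_s$, so $\hat\pi\colon\hat X_s\to\tilde Y_s$ is \'etale and $\al P=\text{Prym}(\hat X_s/\tilde Y_s)$, hence its translate $\hat{\al P}$, has exactly two connected components; this already bounds the number of components of $\al U_X^{\sigma,-}(r)$ by two. To see that the two images are distinct I would argue as in the \'etale symmetric case (Theorem \ref{etalecase}): choosing an even theta characteristic on $\hat X_s$ adapted to a square root of $\al O(\hat S)$ (the analogue of Lemma \ref{pullbackkappa}), the parity of $h^0(X,\hat q_*L\otimes\pi^*\kappa')$ for a suitable theta characteristic $\kappa'$ on $Y$ separates the two components of $\hat{\al P}$, while by Mumford \cite{M2} this parity is constant in families. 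Hence the two components of $\hat{\al P}$ dominate distinct irreducible components of $\al U_X^{\sigma,-}(r)$.

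The main obstacle is the local analysis at the nodes in the converse step: one must show that the $\tilde\sigma$-alternating self-duality genuinely forces non-invertibility of the spectral sheaf at each fixed node, and that the normalised data glue into a global line bundle in $\hat{\al P}$ with the correct norm $\al O(S)\otimes\tilde q^*\Delta^{-1}\otimes\hat\Delta$. Carefully matching the ramification divisors $\tilde S$ and $\hat S=\ak q^*(\tilde S-\tilde R)$ together with their positive linearisations through the normalisation, so that the alternating (rather than symmetric) structure is preserved, is where the real work lies.
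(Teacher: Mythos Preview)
Your overall architecture matches the paper's: well-definedness via relative duality, dominance via the nilpotent cone, identification of the generic fibre with $\hat{\al P}$, and a parity argument for the components. Two points deserve comment.

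\textbf{Fibre identification.} You try to show that the torsion-free spectral sheaf $\sr F$ is forced to be non-locally-free at each node, and then recover $L$ as the line bundle on $\hat X_s$ with $\ak q_*L=\sr F$. The paper sidesteps this local analysis entirely: since for general $s$ the divisor $\frac{1}{2}\tilde R$ is reduced, one simply sets $L=\ak q^*\big(\sr F(-\frac{1}{2}\tilde R)\big)$ and checks directly, using $\hat S=\ak q^*(\tilde S-\tilde R)$, that $\hat\sigma^*L\cong L^{-1}(\hat S)$ with the alternating sign. This twist-and-pull-back manoeuvre makes the ``main obstacle'' you flag disappear; no discussion of invertibility of $\sr F$ at the nodes is needed.

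\textbf{Component separation.} Here there is a genuine gap. You invoke the analogue of Lemma \ref{pullbackkappa} to get a theta characteristic $\kappa'$ on $Y$ with the parity of $h^0(X,\hat q_*L\otimes\pi^*\kappa')$ separating the components. But $\pi$ is ramified, so $\pi^*\kappa'$ is \emph{not} a theta characteristic on $X$: $(\pi^*\kappa')^2=\pi^*K_Y=K_X(-R)$. The correct statement is that the analogue of Lemma \ref{pullbackkappa} produces a theta characteristic $\kappa$ on $X$, and the parity of $h^0(\hat q_*L\otimes\kappa)$ separates the two pieces of $\hat{\al P}$. To show this parity is locally constant on $\al U_X^{\sigma,-}(r)$ one needs a further idea that your proposal omits: choose a $\sigma$-invariant square root $\alpha$ of $\al O_X(R)$ so that $\kappa=\alpha\otimes\pi^*\kappa'$, and observe that the $\sigma$-\emph{alternating} pairing $E\otimes\sigma^*E\to\al O_X$ becomes, after twisting by $\alpha$ and pushing forward, a \emph{symmetric} nondegenerate form on $\pi_*(E\otimes\alpha)$ over $Y$. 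Only then does Mumford's theorem apply to give constancy of $h^0\big(\pi_*(E\otimes\alpha)\otimes\kappa'\big)\bmod 2$. Without this $\alpha$-twist converting alternating to orthogonal, your appeal to \cite{M2} is not justified.
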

\begin{proof}
As in Theorem \ref{main1} we deduce that this map is well-defined. Moreover, using Theorem \ref{denseverystable}, we deduce that the map: $$\Pi:T^*\al U_X^{\sigma,-}(r)\ra \al U_X^{\sigma,-}(r)\times W^{\sigma,-}$$ is dominant. Hence, for general $s\in W^{\sigma,-}$, we get a dominant map $$\sr H^{-1}(s)\lra \al U_X^{\sigma,-}(r).$$ We claim that $\sr H^{-1}(s)$ is a non-empty open set of $\hat{\al P}$. By Proposition \ref{bnrprop} $\sr H^{-1}(s)$ is in bijection with an open set of isomorphism classes of rank one torsion-free $\al O_{\tilde{X}_s}-$modules. Given such  a torsion-free $\al O_{\tilde{X}_s}-$module  $\sr F$, we have $\tilde{\sigma}^*\sr F\cong \sr F^*(\tilde{S})$ (follows from the exact sequence (\ref{seq1})). For general $s\in W^{\sigma,-}$, the divisor $\frac{1}{2}\tilde{R}$ is reduced, consider the line bundle $L=\ak q^*(\sr F(-\frac{1}{2}\tilde{R}))$ on $\hat{X}_s$, it  verifies 
\begin{align*}
\hat{\sigma}^*L&\cong \ak q^* (\tilde{\sigma}^*\sr F(-\dfrac{1}{2}\tilde{R})) \\ 
               &\cong \ak q^*(\sr F^*(-\dfrac{1}{2}\tilde{R}+\tilde{S})) \\
               &\cong (\ak q^*\sr F(-\dfrac{1}{2}\tilde{R}))^{-1}(\hat{S}) \\
               &\cong L^{-1}(\hat{S}).
\end{align*}
In fact the isomorphism $\hat{\sigma}^*L\cong L^{-1}(\hat{S})$ is induced by $\psi$, hence it is $\hat{\sigma}-$alternating, thus $L\in\hat{\al P}$. \\ Conversely, given $L\in \hat{\al P}$ such that $\hat{q}_*L$ is stable, then by duality of finite flat morphisms we deduce that $\hat{q}_*L$ is $\sigma-$alternating anti-invariant vector bundle.

We have seen that the involution $\hat{\sigma}$ has no fixed point, hence $\hat{\al P}$ has two connected components distinguished by the parity of $$h^0(L\otimes \hat{q}^*\kappa)=h^0(\hat{q}_*L\otimes \kappa),$$ where $\kappa$ is a theta characteristic over $X$. It follows that the image of the two connected components of $\hat{\al P}$ can't intersect. Moreover, for some $\sigma-$invariant square root $\alpha$ of $\al O(R)$, we have $\kappa=\alpha\otimes\pi^*\kappa'$, where $\kappa'$ is a theta characteristic over $Y$. Since the $\sigma-$bilinear form $\tilde{\psi}:E\otimes\sigma^*E\lra \al O_X$ is $\sigma-$alternating, that's to say $$\tilde{\psi}(s\otimes\sigma^*t)=-\nu(\sigma^*(\tilde{\psi}(t\otimes\sigma^*s))),$$ where $\nu:\sigma^*\al O_X\ra \al O_X$ is the positive linearisation. Taking the tensor product with $\alpha$ we get a bilinear form  $$(E\otimes\alpha)\otimes\sigma^*(E\otimes\alpha)\lra \al O_X(R),$$ which induces a symmetric non-degenerate bilinear form $$\pi_*(E\otimes\alpha)\otimes\pi_*(E\otimes\alpha)\lra\al O_Y,$$ hence, using the result of Mumford \cite{M2}, the map $\al U_X^{\sigma,-}(r)\ra \bb Z/2$ given by $$E\lra h^0(\pi_*(E\otimes\alpha)\otimes \kappa')\mod 2$$ is constant under deformation of $E$. This implies that $\al U_X^{\sigma,-}(r)$ has two connected components.
\end{proof}

\subsubsection{The \'etale case} Assume now that $\pi:X\ra Y$ is \'etale. 
\begin{proof}[Proof of Theorem \ref{main1.2}] It is clear that $W^{\sigma,-}=W^{\sigma,+}$, and for general $s\in W^{\sigma,-}$, the associated spectral curve $\tilde{X}_s$ is smooth and the attached involution $\tilde{\sigma}$ has no fixed points. Define $$\al P^-=\widetilde{\text{Nm}}^{-1}(\al O(S)\otimes \tilde{\Delta})\subset \text{Pic}^m(\tilde{X}_s).$$
Since we have the positive linearisation on $\al O(\tilde{S})$, it follows that the isomorphism $\tilde{\sigma}^*L\stackrel{\sim}{\lra} L^{-1}(\tilde{S})$ is $\tilde{\sigma}-$alternating. Moreover, let $\xi$ be a line bundle over $X$ of norm $\Delta$, and $E\in \al U_X^{\sigma,+}(r)$. As $\sigma^*\xi\ra \xi^{-1}$ is $\sigma-$alternating, then the isomorphism  $\sigma^*(E\otimes\xi)\stackrel{\sim}{\lra} (E\otimes \xi)^*$ is $\sigma-$alternating too. Since  $q^*\Delta=\tilde{\Delta}$, we have $$\widetilde{\text{Nm}}(q^*\xi)=q^*\text{Nm}(\xi)=\tilde{\Delta},$$ it follows that $q^*\xi$ induces by tensor product an isomorphism $\al P^+\cong \al P^-$. This with Theorem \ref{etalecase} end the proof of Theorem \ref{main1.2}. 
\end{proof}

\begin{rema} \label{det.alt} In this case the determinant induces a morphism $\text{det}:\al U_X^{\sigma,-}(r)\ra P=\text{Nm}^{-1}(\al O_Y)$ if $r$ is even, and $\text{det}:\al U_X^{\sigma,-}(r)\ra P':=\text{Nm}^{-1}(\Delta)$ if $r$ is odd.
\end{rema}

\section{The Hitchin system for invariant vector bundles}
We have seen in Remark \ref{parabolic} that $\sigma-$invariant vector bundles of fixed type $\tau$ correspond to parabolic vector bundles over $Y$ with parabolic structures associated to $\tau$ at the ramification points. The Hitchin systems for parabolic vector bundles have been studied in the \emph{smooth} case by Logares and Martens \cite{LM}. In this special case we have an explicit description of the fibers of the Hitchin map depending on the considered type, as well as a dominance result as in the case of anti-invariant vector bundles. We treat also the singular case.
 
We use results and notations of the previous section. We always suppose  that the cover $X\ra Y$ is ramified, the \'etale case is trivial. Fix the positive linearisation on $\al O_X$ and the \emph{negative} linearisation on $K_X$. Denote by $$\rho:\sigma^*K_X\ra K_X$$ this linearisation. We have $\bar{\sigma}(x)=-x$, where   $\bar{\sigma}$ is the involution on $\bb S=\bb P(\al O_X\oplus K_X^{-1})$ induced by these linearisations and $x$ is the tautological section of the pullback of $K_X$ to $\bb S$.

 Let  $$W^{\sigma,\ak m}=\bigoplus_{i=1}^rH^0(K_X^i)_+.$$ The $\ak m$ in the notation refers to \emph{maximal} types.\\
For simplicity, we assume hereafter that the degree $d$ of the $\sigma-$invariant vector bundles is $0$.

\begin{lemm} \label{5.1}
Let  $s\in W^{\sigma,\ak m}$, $\tilde{X}_s $ the associated spectral curve, then $\sigma$ lifts to an involution $\tilde{\sigma}$ on $\tilde{X}_s $. Moreover, for general such $s$, we have
\begin{itemize}
\item If $r$ is even, then this involution has no fixed point.
\item If $r$ is odd, this involution has just $2n$ fixed points.
\end{itemize}
\end{lemm}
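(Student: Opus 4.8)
The plan is to mimic the proof of Proposition \ref{descentram}, the only difference being that here the involution $\bar\sigma$ on $\bb S=\bb P(\al O_X\oplus K_X^{-1})$ is built from the positive linearisation $\nu$ on $\al O_X$ and the \emph{negative} linearisation $\rho$ on $K_X$, so that $\bar\sigma^*y=y$ but $\bar\sigma^*x=-x$. First I would check that $\bar\sigma$ preserves $\tilde{X}_s$: writing the defining section as $F=x^r+\sum_{i=1}^r(\bar q^*s_i)y^ix^{r-i}$, the signs produced by $\bar\sigma^*x=-x$ are exactly compensated by the $\rho$-invariance of the $s_i$ (each $s_i$ lying in the $+1$-eigenspace of the involution on $H^0(K_X^i)$ induced by $\rho$ is what defines $W^{\sigma,\ak m}$), so that $\bar\sigma^*F$ is a scalar multiple of $F$; this is most transparent in the local frame introduced below. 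Hence $\bar\sigma(\tilde{X}_s)=\tilde{X}_s$ and $\tilde\sigma:=\bar\sigma|_{\tilde{X}_s}$ is the required lift.

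For the fixed-point count I would pass to a local model at a ramification point $p\in R$. Choosing a local coordinate $t$ on $X$ with $\sigma^*t=-t$ and the frame $dt$ of $K_X$, the affine chart of $|K_X|$ acquires coordinates $(t,\hat\xi)$, where $\hat\xi$ is the fibre coordinate of $K_X$ in the frame $dt$, and in these coordinates one checks that $\bar\sigma$ acts by $(t,\hat\xi)\mapsto(-t,-\hat\xi)$. Writing $s_i=\hat s_i(t)(dt)^i$, the defining equation becomes $\hat P(\hat\xi,t)=\hat\xi^r+\sum_i\hat s_i(t)\hat\xi^{r-i}$, and membership in $W^{\sigma,\ak m}$ translates into the parity condition $\hat s_i(-t)=(-1)^i\hat s_i(t)$; in particular $\hat s_i(0)=0$ for every odd $i$. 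Since $\tilde\sigma$ covers $\sigma$, any fixed point lies over $R$, and over $p$ the only $\bar\sigma$-fixed points of the fibre $\bb P^1$ are $\hat\xi=0$ and $\hat\xi=\infty$. The point at infinity never lies on $\tilde{X}_s$ (on the section $y=0$ the equation reduces to $x^r=0$, impossible on $\bb P^1$), so the only candidate is $\hat\xi=0$, which lies on $\tilde{X}_s$ precisely when $\hat s_r(0)=0$.

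I would then split into the two parities. If $r$ is even, $\hat s_r$ is an even function and $\hat s_r(0)$ is unconstrained; for general $s$ it is nonzero at every $p\in R$, so $\hat\xi=0\notin\tilde{X}_s$ and $\tilde\sigma$ has no fixed point. If $r$ is odd, then $\hat s_r$ is odd, so $\hat s_r(0)=0$ automatically at every $p\in R$; thus $\hat\xi=0$ lies on $\tilde{X}_s$ over each of the $\deg R=2n$ points of $R$, giving exactly $2n$ fixed points. For general $s$ these are simple points of $\tilde{X}_s$: one computes $\partial_{\hat\xi}\hat P(0,0)=\hat s_{r-1}(0)$, whose index $r-1$ is even, hence generically nonzero, which by the Jacobian criterion (as in Lemma \ref{smoothlemma}) also shows $\tilde{X}_s$ is smooth there.

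Finally I would justify the genericity statements exactly as in Lemma \ref{dimprym}: nonvanishing of $\hat s_r$ (resp. $\hat s_{r-1}$) at the finitely many points of $R$ is an open condition on $W^{\sigma,\ak m}$, and it is nonempty because the relevant eigenspace is identified with a space of sections of a sufficiently positive line bundle on $Y$ having no common base point at the branch locus, so a general section is nonzero there. The main obstacle I anticipate is purely bookkeeping: pinning down the signs in the negative linearisation $\rho$ so that the action on the fibre coordinate is unambiguously $\hat\xi\mapsto-\hat\xi$ and the eigenspace condition defining $W^{\sigma,\ak m}$ matches the parity $\hat s_i(-t)=(-1)^i\hat s_i(t)$; once this is fixed, the lifting and the count of fixed points are immediate.
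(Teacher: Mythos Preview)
Your proposal is correct and follows essentially the same approach as the paper: both arguments build $\bar\sigma$ from the negative linearisation on $K_X$ so that the fibre coordinate flips sign, verify that the spectral equation is preserved because $s_i(\sigma(p))=(-1)^is_i(p)$, and then observe that the only fixed-point candidate over $p\in R$ is $\hat\xi=0$, which lies on $\tilde X_s$ iff $s_r(p)=0$, a condition forced when $r$ is odd and generically avoided when $r$ is even. Your write-up is somewhat more explicit about ruling out the point at infinity and about the Jacobian-criterion smoothness check, but the logic is identical.
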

\begin{proof}
Consider $s=(s_1,\cdots,s_r)\in W^{\sigma,\ak m}$, so we have $s_i(\sigma(p))=(-1)^is_i(p)$ for each point $p\in X$. Let $p\in X$, and $x_0=[x_0:1]\in (\tilde{X}_s)_p$, then the involution $\bar{\sigma}$ on $\bb P(\al O_X\oplus K_X^{-1})$ attached to the fixed linearasations on $\al O_X$ and $K_X^{-1}$ sends $x_0$ to $y_0=[-x_0:1]$ in $(\tilde{X}_s)_{\sigma(p)}$, but $y_0$ is a solution to $$x^r+s_1(\sigma(p))x^{r-1}+\cdots+s_r(\sigma(p))=0.$$ 
Indeed  
\begin{align*}
y_0^r+s_1(\sigma(p))y_0^{r-1}+\cdots+s_r(\sigma(p)) &= (-x_0)^r+(-s_1(p))(-x_0)^{r-1}+\cdots+(-1)^rs_r(p)\\&= (-1)^r\left(x_0^r+s_1(p)x_0^{r-1}+\cdots+s_r(p)\right) \\&=0.
\end{align*}
hence $\bar{\sigma}(x_0)$ is in $(\tilde{X}_s)_{\sigma(p)}$, thus $\bar{\sigma}$ induces an involution on $\tilde{X}_s $ which we are looking for.\\
Note that $0$ is the only fixed point of $\tilde{\sigma}$ over a ramification point. One remarks that for odd $i$,  $s_i(p)=0$, for any $p\in R$. Suppose that $r$ is odd, this implies that $0$ is always in $(\tilde{X}_s)_p$, and for general $s\in W^{\sigma,\ak m}$, it is a simple root of the equation above, hence there are just $2n$ fixed points in $\tilde{X}_s $. If $r$ is even, we deduce that for general $s$, $\tilde{\sigma}$ has no fixed point.  
\end{proof} 
 Let as before $\tilde{\pi}:\tilde{X}_s\ra\tilde{Y}_s:=\tilde{X}_s/\tilde{\sigma}$. Using Riemann-Roch formula, we get for general $s\in W^{\sigma,\ak m}$: $$g_{\tilde{Y}_s}=\dfrac{1}{2}g_{\tilde{X}_s }+\dfrac{1}{2}-\dfrac{k}{2},$$ where $k$ is the half of the number of fixed points of $\tilde{\sigma}$. \\ Let $\text{Pic}^m(\tilde{X}_s)^{\tilde{\sigma}}$ the locus of $\tilde{\sigma}-$invariant line bundles of degree $m$ over $\tilde{X}_s$. Since $g_{\tilde{X}_s }=r^2(g_X-1)+1$ and $g_X-1=2(g_Y-1)+n$, we deduce from Lemma \ref{5.1}
\begin{align*}\text{dim}(\text{Pic}^{m}(\tilde{X}_s )^{\tilde{\sigma}})=g_{\tilde{Y}_s}&=\begin{cases} 
r^2(g_Y-1)+n\frac{r^2}{2}+1 &r\equiv 0\mod2 \\
r^2(g_Y-1)+n\frac{r^2-1}{2}+1 &r\equiv 1\mod2 
\end{cases}\\&=\text{dim}(\al U_X^\sigma(r,0)),
\end{align*}
where the last equality is due to Proposition \ref{invariantdimension}.
 
\begin{rema}For general $s\in W^{\sigma,\ak m}$, $\tilde{X}_s $ is smooth. Indeed, taking $s=(0,\cdots,0,s_r)\in W^{\sigma,\ak m}$, where $s_r\in H^0(K_X^r)_+$ is a general global section which vanishes at most with multiplicity one at every ramification point. Then, by the proof of Lemma \ref{smoothlemma}, we deduce that $\tilde{X}_s$ is smooth. Since $W^{\sigma,\ak m}$ is irreducible, it follows that the set of $s\in W^{\sigma,\ak m}$ is dense. 
\end{rema}

Let $E$ be a stable $\sigma-$invariant vector bundle.  Recall from subsection \ref{infinvariant} that we have considered the involutions on $H^0(X, E\otimes E^*\otimes K_X)$ and $H^1(X, E\otimes E^*)$ induced by the canonical isomorphism $\sigma^*(E\otimes E^*)\ra E\otimes E^*$ (which is independent of the choice of a linearisation on $E$) and the linearisation $\rho$ on $K_X$.  By Lemma \ref{serre}, Serre duality is equivariant with respect to these involutions, i.e. $$H^1(X,E\otimes E^*)_+^*\stackrel{\sim}{\longrightarrow} H^0(X, E\otimes E^*\otimes K_X)_+.$$
Further we have
\begin{prop}
The Hitchin morphism induces a map 
$$\sr H_E:H^0(X, E\otimes E^*\otimes K_X)_+\longrightarrow  W^{\sigma,\ak m}=\bigoplus_{i=1}^r H^0(X,K_X^i)_{+}.$$
Moreover, we have an equality of dimensions $$\text{dim}(\al U_X^\sigma(r,0))=\text{dim}(W^{\sigma,\ak m}).$$
\end{prop}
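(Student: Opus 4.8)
The plan is to establish the two assertions in turn: first the equivariance of the Hitchin map that forces it to land in $W^{\sigma,\ak m}$, then the numerical equality by a direct Riemann--Roch count on $Y$. For the first assertion I would argue exactly as in Proposition~\ref{Hitchinmapequivarience}, noting that the invariant case is in fact simpler since no anti-invariant isomorphism $\psi$ intervenes. Fix a linearisation $\varphi:\sigma^*E\to E$; it induces the canonical linearisation $\ak t$ on $\text{End}(E)=E\otimes E^*$ by conjugation (independent of the scalar ambiguity in $\varphi$), and together with $\rho$ on $K_X$ this defines the involution $f$ on $H^0(X,E\otimes E^*\otimes K_X)$. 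Viewing a Higgs field $\phi$ as a morphism $E\to E\otimes K_X$, one has $f(\phi)=(\varphi\otimes\rho)\circ(\sigma^*\phi)\circ\varphi^{-1}$.

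The crucial step is the computation of $\sr H_i(f(\phi))$. Since $\bigwedge^i$ and the trace are invariant under conjugation by $\varphi$, the conjugating factors cancel and only the pullback along $\sigma$ together with $\rho^{\otimes i}$ survive, giving
$$\sr H_i(f(\phi))=\rho^{\otimes i}\bigl(\sigma^*(\sr H_i(\phi))\bigr).$$
This is the same local characteristic-polynomial manipulation as in Proposition~\ref{Hitchinmapequivarience}, with $\psi$ replaced by $\varphi$ and with no transpose, so that no sign $(-1)^i$ is produced. Hence if $f(\phi)=\phi$ then each $\sr H_i(\phi)$ is fixed by the involution $s\mapsto\rho^{\otimes i}(\sigma^*s)$, i.e.\ $\sr H_i(\phi)\in H^0(X,K_X^i)_+$, so $\sr H_E$ maps the $+$eigenspace into $W^{\sigma,\ak m}$.

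For the dimension equality I would first pin down the eigenspaces for the \emph{negative} linearisation. From $K_X=\pi^*(K_Y\otimes\Delta)$ and the projection formula one gets $\pi_*K_X^i\cong(K_Y^i\otimes\Delta^i)\oplus(K_Y^i\otimes\Delta^{i-1})$, and since $\rho^{\otimes i}=(-1)^i\eta^{\otimes i}$, Remark~\ref{propersubspaces} yields
$$h^0(X,K_X^i)_+=\begin{cases} h^0(Y,K_Y^i\otimes\Delta^i) & i\text{ even}\\ h^0(Y,K_Y^i\otimes\Delta^{i-1}) & i\text{ odd.}\end{cases}$$
I would then evaluate each term by Riemann--Roch on $Y$ (all bundles occurring are nonspecial for $i\geq 2$, while $i=1$ gives $K_Y$ and contributes one extra unit from $h^1$), sum over $1\leq i\leq r$, and verify that the total equals the value of $\dim\al U_X^\sigma(r,0)$ recorded in Remark~\ref{maxtype}, in both parities of $r$. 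Alternatively, since the lines preceding the statement already give $\dim\text{Pic}^m(\tilde{X}_s)^{\tilde\sigma}=g_{\tilde{Y}_s}=\dim\al U_X^\sigma(r,0)$, it suffices to match $\dim W^{\sigma,\ak m}$ with that number.

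The main obstacle is purely bookkeeping: one must track the negative linearisation correctly, since the factor $(-1)^i$ interchanges which summand of $\pi_*K_X^i$ lies in the $+$eigenspace, and one must not forget the single special contribution at $i=1$. Once this sign is handled, the equivariance of the first part is conceptually identical to Proposition~\ref{Hitchinmapequivarience} and the dimension count is routine.
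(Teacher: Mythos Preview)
Your proposal is correct and follows essentially the same approach as the paper: for the equivariance you invoke the computation of Proposition~\ref{Hitchinmapequivarience} (noting that conjugation by $\varphi$ leaves the characteristic coefficients untouched, so only $\rho^{\otimes i}\circ\sigma^*$ survives), and for the dimension you identify the $+$eigenspaces via Remark~\ref{propersubspaces} under the negative linearisation and sum by Riemann--Roch on $Y$, isolating the special contribution at $i=1$. This is exactly what the paper does, down to the parity split and the $i=1$ case.
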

\begin{proof}
By the proof of Proposition \ref{Hitchinmapequivarience}, we deduce that $$\sr H_i(f(\phi))=\rho^{\otimes i}(\sigma^*(\sr H_i(\phi))), $$ where $f$ is the involution on $H^0(X, E\otimes E^*\otimes K_X)$. Here, one should make a similar explicit local description of $\sr H_i$, this implies the first part of the lemma.\\
As we use the negative linearisation on $K_X$, by Remark \ref{propersubspaces}, it follows 
$$\begin{cases} h^0(X,K_X^i)_+=h^0(Y,K_Y^i\otimes\Delta^i)=(2i-1)(g_Y-1)+in & i\equiv 0\mod 2\\
                h^0(X,K_X^i)_+=h^0(Y,K_Y^i\otimes\Delta^{i-1})=(2i-1)(g_Y-1)+(i-1)n & i\equiv 1\mod 2,\; i\geqslant 3 \\
                h^0(X,K_X)_+ =h^0(Y,K_Y)=g_Y \end{cases}$$
taking the sum, we get $$\sum_{i=1}^rh^0(X,K_X^i)_{+}=\begin{cases} 
r^2(g_Y-1)+n\frac{r^2}{2}+1 &r\equiv 0\mod2 \\
r^2(g_Y-1)+n\frac{r^2-1}{2}+1 &r\equiv 1\mod2 
\end{cases}.$$\\
\end{proof}

\subsection{Smooth case}
Recall from Remark \ref{maxtype} that we have defined a maximal type to be a type $\tau$ such that $\al U_X^{\sigma,\tau}(r,0)$ has maximal dimension, and we have denoted the set of such types by $\ak{MAX}$. 
\begin{theo}\label{main3}Let $s\in W^{\sigma,\ak m}$ such that $\tilde{X}_s $ is smooth. Then the direct image map induces a dominant map $$q_*:\text{Pic}^m(\tilde{X}_s )^{\tilde{\sigma}}\dashrightarrow \al U_X^{\sigma,\ak{m}}(r,0),$$
where $\al U_X^{\sigma,\ak{m}}(r,0)$ is the moduli space of $\sigma-$invariant vector bundles of type $\tau\in\ak{MAX}$ (see Remark \ref{maxtype}). Moreover, for each type $\tau \in \ak{MAX}$, there exists a unique types $\tilde{\tau}$ of invariant line bundles in $\text{Pic}(\tilde{X}_s )$, such that we have a dominant map $$q_*:\text{Pic}^m(\tilde{X}_s )^{\tilde{\sigma},\tilde{\tau}}\dashrightarrow \al U_X^{\sigma,\tau}(r,0).$$ 
\end{theo}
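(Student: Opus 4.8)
The plan is to run, \emph{mutatis mutandis}, the argument of Theorem~\ref{main1}, replacing the anti-invariant normalisation by the invariant one throughout. First I would check that $q_*$ is a well-defined rational map into $\al U_X^\sigma(r,0)$: if $L$ is $\tilde\sigma$-invariant, then the relation $q\circ\tilde\sigma=\sigma\circ q$ gives $\sigma^*q_*L\cong q_*\tilde\sigma^*L\cong q_*L$, so $q_*L$ is $\sigma$-invariant, and $\deg q_*L=\deg\det(q_*\al O_{\tilde X_s})+\deg L=-m+m=0$. Next, dominance. Exactly as in Theorem~\ref{denseverystable}, Laumon's theorem that the nilpotent cone $\Lambda_{X,r}$ is Lagrangian, together with the fact that $T^*\al U_X^{\sigma,\tau}(r,0)$ is a symplectic subvariety of $T^*\al U_X(r,0)$, namely the fixed locus of the cotangent lift of $E\mapsto\sigma^*E$, shows that $\Lambda_{X,r}\cap T^*\al U_X^{\sigma,\tau}(r,0)$ is isotropic; since it contains the zero section and has dimension $\dim\al U_X^{\sigma,\tau}(r,0)$, it is Lagrangian, so very stable $\sigma$-invariant bundles are dense in each maximal-type component. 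Hence $\Pi$ is dominant, and for general $s\in W^{\sigma,\ak m}$ the fibre $\sr H^{-1}(s)$ dominates $\al U_X^{\sigma,\ak m}(r,0)$.

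The key step is the equivariant form of the BNR correspondence (Proposition~\ref{bnrprop}). For $(E,\phi)$ with $\sr H_E(\phi)=s$ and $\phi\in H^0(X,E\otimes E^*\otimes K_X)_+$, the eigen-line-bundle $L$ sits in the exact sequence analogous to (\ref{seq1}). Applying $\tilde\sigma^*$ and using that $E$ is $\sigma$-\emph{invariant} (so $\varphi:\sigma^*E\cong E$), that $\phi$ lies in the $+$-eigenspace, and that the spectral involution satisfies $\bar\sigma(x)=-x$ for the negative linearisation on $K_X$, one identifies the pulled-back sequence with the original one; hence $\tilde\sigma^*L\cong L$, i.e. $L\in\text{Pic}^m(\tilde X_s)^{\tilde\sigma}$, and conversely any such $L$ produces a $\sigma$-invariant $(q_*L,\phi)$. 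Combined with the equality $\dim\text{Pic}^m(\tilde X_s)^{\tilde\sigma}=g_{\tilde Y_s}=\dim\al U_X^\sigma(r,0)$ established above (Lemma~\ref{5.1} and Proposition~\ref{invariantdimension}), this shows $\sr H^{-1}(s)$ is an open subset of $\text{Pic}^m(\tilde X_s)^{\tilde\sigma}$ and gives the first dominant map.

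The main obstacle is the refinement to fixed types, which I expect to be the technical heart and which reduces to a local computation of the linearisation of $q_*L$ over the points of $R$. Over $p\in R$ the fibre $q^{-1}(p)$ is the root set of the $\bar\sigma$-equivariant polynomial $P(x,p)$; since $s_i(p)=0$ for odd $i$, the roots split into pairs $\{x_0,-x_0\}$ interchanged by $\tilde\sigma$, together with the single fixed root $x=0$ when $r$ is odd. On each interchanged pair $\tilde\varphi$ acts as a transposition of two lines, contributing eigenvalues $+1$ and $-1$, while the fixed root contributes the sign $\epsilon_p\in\{+1,-1\}$ of the linearisation of $L$ at that point. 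Thus the multiplicity of $-1$ in $\varphi_p$ is $k_p=r/2$ when $r$ is even, so $\tilde\sigma$ is free and $\ak{MAX}$ consists of a single type onto which all of $\text{Pic}^m(\tilde X_s)^{\tilde\sigma}$ maps dominantly; and $k_p=(r-1)/2+[\epsilon_p=-1]\in\{(r-1)/2,(r+1)/2\}$ when $r$ is odd.

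To finish the odd case I would match the two bookkeepings. The parity relation $\sum_p k_p\equiv 0 \pmod 2$ cutting out $\ak{MAX}$ and the relation $\sum_p[\epsilon_p=-1]\equiv\deg L=m\equiv 0\pmod 2$ cutting out the admissible line-bundle types both select, modulo the common global sign ambiguity, sets of cardinality $2^{2(n-1)}$; hence the assignment $\tilde\tau\mapsto\tau$ determined by $\epsilon_p=-1\Leftrightarrow k_p=(r+1)/2$ is a bijection. This produces, for each $\tau\in\ak{MAX}$, the unique type $\tilde\tau$ with $q_*:\text{Pic}^m(\tilde X_s)^{\tilde\sigma,\tilde\tau}\dashrightarrow\al U_X^{\sigma,\tau}(r,0)$; dominance of each such map follows from the irreducibility of $\text{Pic}^m(\tilde X_s)^{\tilde\sigma,\tilde\tau}$ together with the dimension equality, and the union over types recovers the first dominant map.
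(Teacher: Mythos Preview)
Your proposal is correct and follows essentially the same route as the paper: establish that $q_*$ sends $\tilde\sigma$-invariant line bundles to $\sigma$-invariant bundles, invoke the Lagrangian-nilpotent-cone argument (as in Theorem~\ref{denseverystable}) to get density of very stable bundles in each $\al U_X^{\sigma,\tau}(r,0)$, deduce dominance of $\Pi$, and then match types via the local description of $(q_*L)_p$ over $p\in R$. The paper's own proof is terser---it simply asserts ``it is clear that $L$ is $\tilde\sigma$-invariant iff $q_*L$ is $\sigma$-invariant'' and refers back to Theorem~\ref{main1}---whereas you spell out the equivariant BNR step via the exact sequence~(\ref{seq1}) and carry out the cardinality/parity bookkeeping showing that $\tilde\tau\mapsto\tau$ is a bijection between the $2^{2(n-1)}$ line-bundle types and the $2^{2(n-1)}$ maximal types; the paper only writes down the recipe $\epsilon_p=-1\Leftrightarrow k_p=(r+1)/2$ without checking the count. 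One minor phrasing issue: in your last sentence, dominance of each type-specific map does not really ``follow from irreducibility together with the dimension equality''---it follows directly from the very-stable argument applied component by component, which you already set up correctly in the first paragraph.
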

\begin{proof}
It is clear that $L$ is $\tilde{\sigma}-$invariant if and only if $q_*L$ is $\sigma-$invariant. \\
By the proof of Theorem \ref{main1}, for general $\sigma-$invariant vector bundle $E$ of type $\tau\in\ak{MAX}$, the restriction of the Hitchin map $$\sr H_E:H^0(X,E\otimes E^*\otimes K_X)_+\ra W^{\sigma,\ak m} $$ is dominant. This implies that the map $$\Pi:T^*\al U_X^{\sigma,\ak{m}}(r,0)\lra \al U_X^{\sigma,\ak{m}}(r,0)\times W^{\sigma,\ak m}$$ is dominant too. Moreover, if we fix a type $\tau\in\ak{MAX}$, then there exists a corresponding type $\tilde{\tau}$ of $\tilde{\sigma}-$invariant line bundles on $\tilde{X}_s $, such that $$\text{Pic}^m(\tilde{X}_s )^{\tilde{\sigma},\tilde{\tau}}\dashrightarrow \al U_X^{\sigma,\tau}(r,0)$$ is dominant.
The type $\tilde{\tau}$ is constructed as follows: Suppose that $r$ is odd (the even case is trivial since $\tilde{\sigma}$ has no fixed point by Lemma \ref{5.1}). Remark first that $\tilde{\tau}\in\{+1,-1\}^{2n}/\pm$.   if $p\in R$ is such that $k_p=(r+1)/2$, then over such $p$, take $-1$ in  $\tilde{\tau}$, and $+1$ over the rest of points in $R$ (note that, because $s\in W^{\sigma,\ak m}$ is general, over each $p\in R$ there is just one fixed point by $\tilde{\sigma}$, so we identify $\tilde{R}$ with $R$ in this case). If $L$ is $\tilde{\sigma}-$invariant line bundle over $\tilde{X}_s$ of type $\tilde{\tau}$, then using the identification  $(q_*L)_p\cong \bigoplus_{x\in q^{-1}(p)}L_x$, we see easly that the type of $q_*L$ is $\tau$.\\  
\end{proof}
 
We prove in the next section that the only types corresponding to smooth spectral covers of $X$ are the maximal ones.

\subsection{General case}\label{generalcase}
Now let $\tau$ be any type, for simplicity of notations we suppose that we have just one point $p\in R$ such that $k_p< [\dfrac{r}{2}]$, where $[\;]$ stands for the floor function. In fact we can always suppose  $k_p\leqslant [r/2]$ due to taking the tensor product by $\al O_{X}(p)$. And for all other ramification point $a\not=p$ , $k_a$ is maximal (that's $k_a=r/2$ if $r$ is even, and $k_a=(r\pm1)/2$ if $r$ is odd).\\
To get a vector bundle of such type as a direct image, we should have $r-2k_p$ fixed points by $\tilde{\sigma}$ above $p$. Indeed, let $x$ be a point of $\tilde{X}_s$ above $p$, as  $\tilde{\sigma}$ interchanges the two fibers $L_x$ and $L_{\tilde{\sigma}(x)}$, its matrix over these two points is given by 
$$\begin{pmatrix} 0 & 1 \\ 1 & 0 \end{pmatrix} \sim \begin{pmatrix} 1 & 0 \\ 0 & -1 \end{pmatrix}.$$   Hence, one should have $2k_p$ non-fixed points to get the $-1$ eigenvalue with multiplicity $k_p$. This intuition is proved in the following theorem 
\begin{theo}
Let $\tau$ be a type as above, and denote by $$W^{\sigma,\tau}=H^0(K_X)_+\oplus\cdots \oplus H^0(K_X^{2k_p+1})_+\oplus \bigoplus_{i=2k_p+2}^{r}H^0(X,K_X^{i}(-(i-2k_p-1)p))_+.$$
Then, for any $\sigma-$invariant stable vector bundle $E$ of type $\tau$, the Hitchin map factorises through $W^{\sigma,\tau}$ giving a map $$\sr H_E:H^0(E\otimes E^*\otimes K_X)_+\lra W^{\sigma,\tau}.$$
Moreover, $$\text{dim}(\al U_X^{\sigma,\tau}(r,0))=\text{dim}(W^{\sigma,\tau}).$$ 
\end{theo}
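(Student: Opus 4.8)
The plan is to establish the two assertions in turn: the \emph{factorisation}, that $\sr H_E$ maps $H^0(X,E\otimes E^*\otimes K_X)_+$ into $W^{\sigma,\tau}$, and then the \emph{dimension equality} $\text{dim}(W^{\sigma,\tau})=\text{dim}(\al U_X^{\sigma,\tau}(r,0))$. Both reduce to a local analysis at the distinguished point $p$ of an invariant Higgs field, combined with the dimension formula of Proposition \ref{invariantdimension}.

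For the factorisation I would first pin down the local shape near $p$ of an invariant $\phi\in H^0(X,E\otimes E^*\otimes K_X)_+$. Choose a local coordinate $t$ with $\sigma^*t=-t$ and a local frame of $E$ in which the linearisation is the constant involution $D=\text{diag}(I_{r-k_p},-I_{k_p})$, and write $\phi=A(t)\,dt$. Since $\rho_p=-1$, the condition $f(\phi)=\phi$ for the involution $f$ induced by the canonical linearisation of $E\otimes E^*$ and $\rho$ (as in the proof of Proposition \ref{Hitchinmapequivarience}) becomes $A(t)=-D\,A(-t)\,D^{-1}$; equivalently the two diagonal blocks of $A(t)$ are odd in $t$, hence divisible by $t$, while the two off-diagonal blocks are even. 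The crucial step is to bound the order of vanishing at $t=0$ of $\sr H_i(\phi)=(-1)^i\text{Tr}(\bigwedge^iA(t))$: in any $i\times i$ principal minor, a term using $a$ indices from the first block and $b=i-a$ from the second carries at least $t^{\,i-2\min(a,b)}$, and the constraint $b\le k_p$ forces $\min(a,b)\le k_p$, so every such minor, and hence $\sr H_i(\phi)$, vanishes at $p$ to order at least $i-2k_p$ for \emph{every} $\phi$, not merely the generic one. To match this with $W^{\sigma,\tau}$ I would then invoke parity: with the negative linearisation $\rho$, Remark \ref{propersubspaces} shows that a section of $H^0(X,K_X^i)_+$, written $f_i(t)(dt)^i$, has $f_i$ of the same parity as $i$; since $i-2k_p-1$ has parity opposite to $i$, imposing vanishing of order $i-2k_p-1$ is the same as imposing order $i-2k_p$, so $H^0(K_X^i(-(i-2k_p-1)p))_+=H^0(K_X^i(-(i-2k_p)p))_+$. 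Together with $\sr H_i(\phi)\in H^0(K_X^i)_+$ (the invariant analogue of Proposition \ref{Hitchinmapequivarience}), this places $\sr H_i(\phi)$ in the $i$-th summand of $W^{\sigma,\tau}$ for all $i$, proving the factorisation.

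For the dimension equality I would descend everything to $Y$. Using $K_X=\pi^*(K_Y\otimes\Delta)$ and Remark \ref{propersubspaces}, vanishing of $s_i$ to order $i-2k_p$ at the ramification point $p$ corresponds to vanishing of order $\lfloor i/2\rfloor-k_p$ at $\pi(p)$ of the associated section on $Y$ of $K_Y^i\otimes\Delta^i$ (for $i$ even) or $K_Y^i\otimes\Delta^{i-1}$ (for $i$ odd). As these bundles have large degree, each such point condition drops $h^0$ by exactly $\lfloor i/2\rfloor-k_p$, so the total drop of $\text{dim}(W^{\sigma,\tau})$ from $\text{dim}(W^{\sigma,\ak m})$ equals $\sum_{i=2k_p+2}^{r}(\lfloor i/2\rfloor-k_p)=\sum_{j=2}^{r-2k_p}\lfloor j/2\rfloor$, which evaluates to $(r/2-k_p)^2$ for $r$ even and to $((r-1)/2-k_p)((r+1)/2-k_p)$ for $r$ odd. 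In either case this equals $k_p^{\max}(r-k_p^{\max})-k_p(r-k_p)$, where $k_p^{\max}$ is the maximal value, which by Proposition \ref{invariantdimension} is precisely the drop of $\text{dim}(\al U_X^{\sigma,\tau}(r,0))$ relative to the maximal type; since the two dimensions coincide in the maximal case (Remark \ref{maxtype}), they coincide here.

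The main obstacle is the order bound in the factorisation: it must hold for \emph{all} invariant Higgs fields, so the combinatorial minor estimate has to be carried out carefully, tracking exactly which blocks of $A(t)$ are divisible by $t$. A secondary point to confirm is that the line bundles appearing on $Y$ are non-special in the relevant range of $i$, so that the successive point conditions are independent and each drops $h^0$ by the stated amount; this is automatic once their degrees exceed $2g_Y-2$, with only small-genus or small-$i$ edge cases requiring separate inspection.
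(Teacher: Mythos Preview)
Your proposal is correct and follows essentially the same strategy as the paper: the local block decomposition of an invariant Higgs field at $p$ (diagonal blocks divisible by $t$, off-diagonal blocks unconstrained), the combinatorial bound on principal minors giving $\operatorname{ord}_p\sr H_i(\phi)\geqslant i-2k_p$, and the parity observation identifying $H^0(K_X^i(-(i-2k_p-1)p))_+$ with $H^0(K_X^i(-(i-2k_p)p))_+$ all appear in the paper's proof in the same form. The one minor difference is in the dimension count: the paper computes $h^0(X,K_X^k(-ip))_+$ directly via the Lefschetz fixed point formula and then subtracts, whereas you descend to $Y$ and count the point conditions at $\pi(p)$; both yield the same drop $\sum_{j=2}^{r-2k_p}\lfloor j/2\rfloor$, and your worry about non-specialty is moot since the relevant summands have $i\geqslant 2k_p+2\geqslant 2$.
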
    
\begin{proof}
First we verify the dimensions. By Proposition \ref{invariantdimension}, we have $$\text{dim}(\al U_X^{\sigma,\tau}(r,0))=r^2(g_Y-1)+1+k_p(r-k_p)+\begin{cases} (2n-1)\frac{r^2}{4} &\;\;\text{ if }r\equiv 0\mod2 \\ (2n-1)\frac{r^2-1}{4}&\;\;\text{ if }r\equiv 1\mod2\end{cases}.$$
Recall that we have fixed the negative linearisation on $K_X$. By Lefschetz fixed point formula we deduce that 
\begin{equation} \label{cases} h^0(X,K_X^k(-ip))_+=(2k-1)(g_Y-1)+\begin{cases} kn-\frac{i}{2} &  k\equiv 0,\, i\equiv 0\mod2 \\ 
 kn-\frac{i+1}{2} & k\equiv 0,\, i\equiv 1\mod2 \\
 (k-1)n-\frac{i}{2} & k\equiv 1,\,  i\equiv 0\mod2 \\
 (k-1)n-\frac{i-1}{2} & k\equiv 1,\, i\equiv 1\mod2 
\end{cases}.
\end{equation}
So the dimension of $W^{\sigma,\tau}$ is given by $$\text{dim}(W^{\sigma,\tau})= \text{dim}(W^{\sigma,\ak m})-\sum_{i=1}^{r-2k_p-1}d(i),$$
where $$d(i)=\begin{cases} \frac{i}{2} & \;\;\text{ if } i\equiv 0\mod 2\\ 
\frac{i+1}{2} & \;\;\text{ if } i\equiv 1\mod 2 \end{cases}.$$
  
By a simple computation, we get $$\sum_{i=1}^{r-2k_p-1}d(i)=\begin{cases} \frac{r^2}{4}-k_p(r-k_p) & \;\;\text{ if } i\equiv 0\mod 2\\ 
\frac{r^2-1}{4}-k_p(r-k_p) & \;\;\text{ if } i\equiv 1\mod 2 \end{cases}.$$
It follows  that 
\begin{align*}
\text{dim}(W^{\sigma,\tau})&=\begin{cases} r^2(g_Y-1)+1+(2n-1)\dfrac{r^2}{4}+k_p(r-k_p) &\;\text{ if } r\equiv 0\mod2 \\ r^2(g_Y-1)+1+(2n-1)\dfrac{r^2-1}{4}+k_p(r-k_p) &\;\text{ if } r\equiv 1\mod2 \end{cases} \\  
&=\text{dim}(\al U_X^{\sigma,\tau}(r,0)).\\
\end{align*}

Now take $\phi\in H^0(X,E\otimes E^*\otimes K_X)_+$, let $\varphi:\sigma^*E\cong E$,  then the following diagram commutes $$\xymatrix{\phi:&E\ar[r]&E\otimes K_X\ar[d]^{\sigma^*(\varphi\otimes \rho)}\\ \sigma^*\phi:&\sigma^*E\ar[r] \ar[u]^{\varphi} &\sigma^*E\otimes \sigma^*K_X,}$$
 that's $$\sigma^*\phi=\sigma^*(\varphi\otimes \rho)\circ\phi\circ\varphi.$$
 In particular over $p$, one has
$$\phi_p=-A_p\phi_p A_p,$$
where $A_p=\text{diag}(\underbrace{-1,\cdots,-1}_{k_p},1,\cdots,1)$ is $r\times r$ diagonal matrix. 
 
This implies that  the matrix $\phi_p$  is of the form 
$$\begin{pmatrix}
0 & M \\ N & 0
\end{pmatrix},$$
where $M$ and $N$ are two matrices of type $k_p\times (r-k_p)$ and $(r-k_p)\times k_p$.\\

Now let $t$ be a local parameter on the neighbourhood of $p$, and denote $\phi(t)=\phi_p+t\phi'$ the restriction of $\phi$ to this neighbourhood. Then we get $$s_r(t)=\text{det}(\phi(t))=\text{det}(\phi_p+t\phi').$$ But   
\begin{equation}\label{matrix2}\phi_p+t\phi'=\begin{pmatrix} t\times * & M' \\ N' & t\times * \end{pmatrix}.\end{equation}
where $M'$ and $N'$ are two matrices of type $k_p\times (r-k_p)$ and $(r-k_p)\times k_p$ respectively, which are not necessarily divisible by $t$. Using the developpement of the determinant as a sum of monomials in the entries of the matrix, we see that every monomial contains at least $r-2k_p$ entries that belong to neither $M'$ nor $N'$, i.e. divisible by $t$. So we deduce that $\text{det}(\phi(t))$ is divisible by $t^{r-2k_p}$,  hence $$\text{det}(\phi(t))\in H^0(X,K_X^r(-(r-2k_p)p))_+.$$
But we have $$H^0(X,K_X^r(-(r-2k_p-1)p))_+=H^0(X,K_X^r(-(r-2k_p)p))_+,$$ because the first space is included in the second and they have the same dimension by formula (\ref{cases}).\\

The general case is treated similarly. Let $i\geqslant 2k_p+2$. Consider $\phi(t)$ as an element of $\al Mat_r(\bb C[[t]])$, and denote it by $\phi(t)=(a_{i,j}(t))_{i,j}$. As $\sr H_i(\phi)$ is by definition $(-1)^i\text{Tr}(\Lambda^i\phi(t))$. We just need to calculate the diagonal elements of the matrix $\Lambda^i\phi(t)$. Assume that $$\Lambda^i\phi(t)=(\alpha_{\underline{k},\underline{l}}(t)),$$ where $\underline{k}$ and $\underline{l}$ are $i$-tuples of strictly increasing integer in $\{1,\cdots,r\}$. Then if $\underline{k}=(k_1<\cdots<k_i)$ we have  $$\alpha_{\underline{k},\underline{k}}(t)=\text{det}(a_{k_l,k_{l'}}(t))_{1\leqslant l,l'\leqslant i}.$$ 
Hence, from the form of $\phi(t)$ given in (\ref{matrix2}), we deduce that $\sr H_i(\phi(t))$ is divisible by at least $t^{i-2k_p}$, hence $$\sr H_i(\phi)\in H^0(X,K_X^i(-(i-2k_p)p))_+=H^0(X,K_X^i(-(i-2k_p-1)p))_+.$$
Thus $$\sr H_E(\phi)\in W^{\sigma,\tau}.$$
\end{proof} 
Unfortunately, most of the types correspond to singular spectral curve $\tilde{X}_s \ra X$. But for general $s\in W^{\sigma,\tau}$ where $\tau$ is as above, the corresponding $\tilde{X}_s $ has just the point $0$ over $p$ which is singular with multiplicity $r-2k_p$. Moreover, this singularity is ordinary (the tangents at this point are distinct), we can see that using the equation defining $\tilde{X}_s$ and the generality of $s\in W^{\sigma,\tau}$. Hence the geometric genus  of the normalisation $\hat{X}_s$ of $\tilde{X}_s $ is equals to $$g_{\hat{X}_s}=r^2(g_X-1)+1-\dfrac{(r-2k_p)(r-2k_p-1)}{2}.$$
Moreover, the involution $\tilde{\sigma}$ lifts to an involution $\hat{\sigma}$ on $\hat{X}_s$ with $r-2k_p$ fixed points if $r$ is even and $r-2k_p+2n-1$ if $r$ is odd (Recall that we assumed for simplicity that we have just one point $p\in R$ with $k_p< [r/2]$). Indeed, if $t$ is a local parameter in a local neighbourhood of $p$ and $x$ is a local parameter near the ramification point $0\in \tilde{R}$ over $p$, then by definition,  $\tilde{\sigma}$  send $t\ra-t$ and $x\ra - x$, thus it does not interchange the two branches near $\lambda$ 
$$\begin{tikzpicture} 
\draw [black] plot [smooth] coordinates {(1.5,1) (0,0) (-1.2,-0.6) (-1.55,0) (-1.2,0.6) (0,0) (1.5,-1)};
\foreach \Point in {(0,0)}{ \node at \Point {\textbullet};}
\draw[|->] (0,-0.9) -- (0,-1.9);
\draw[->] (0,0) -- (1.4,0);
\draw[->] (0,0) -- (0,1.2);
\draw [black] plot [smooth] coordinates {(-1.5,-2.5) (1.5,-2.5)};
\draw [] plot [smooth] coordinates {(0.5,0.4) (-0.2,0.3) (-0.55,-0.25)};
\draw [->]  (0.45,0.4)--(0.5,0.4);
\draw [->]  (-0.5,-.15)--(-0.55,-0.25);
\foreach \Point in {(0,-2.5)}{ \node at \Point {\textbullet};}
\node (A) at (-0.2, 0.5) {$\tilde{\sigma}$};
\node (X) at (-1.8, -2.5) {$X$};
\node (C) at (-1.8, 0) {$\tilde{X}_s $};
\node (D) at (0, -.3) {$\lambda$};
\node (p) at (0, -2.8) {$p$};
\node (x) at (0, 1.5) {$x$};
\node (t) at (1.6, 0) {$t$};
\end{tikzpicture}  $$
If $r$ is odd, $\hat{\sigma}$ fixes also the fixed points of $\tilde{\sigma}$ outside the singular points.\\

Let $\varepsilon(r)=r\mod 2$. It follows that the genus $g_{\hat{Y}_s}$ of $\hat{Y}_s=\hat{X}_s/\hat{\sigma}$ is given by \begin{align*} g_{\hat{Y}_s}&=\frac{1}{2}\left(g_{\hat{X}_s}+1-\dfrac{r-2k_p}{2}-\varepsilon(r)\dfrac{2n-1}{2}\right)\\ &=\dfrac{r^2}{2}(g_X-1)+1-\dfrac{(r-2k_p)^2}{4}+\varepsilon(r)\dfrac{2n-1}{4}\\&= r^2(g_Y-1)+1+n\dfrac{r^2}{2}-\dfrac{(r-2k_p)^2}{4}+\varepsilon(r)\dfrac{2n-1}{4}\\&= r^2(g_Y-1)+1+(2n-1)\dfrac{r^2-\varepsilon(r)}{4}+k_p(r-k_p).
\end{align*}
This implies $$g_{\hat{Y}_s}=\text{dim(Pic}^m(\hat{X}_s)^{\hat{\sigma}})=\text{dim}(\al U_X^{\sigma,\tau}(r,0)).$$ So the generic fiber of the Hitchin morphism  $$\sr H:T^*\al U_X^{\sigma,\tau}(r,0)\lra W^{\sigma,\tau}$$ is of maximal dimension. So we get again the complete integrability of the Hitchin system in this case too.

Moreover, with the same method used so far proving the dominance results, we deduce 

\begin{theo}
For each type $\tau$, the pushforward map $$\text{Pic}^m(\hat{X}_s)^{\hat{\sigma},\hat{\tau}}\dashrightarrow \al U_X^{\sigma,\tau}(r,0)$$
is dominant, for some type  $\hat{\tau}$ of $\hat{\sigma}-$invariant line bundles over $\hat{X}_s$.
\end{theo}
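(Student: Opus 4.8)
The plan is to imitate the dominance arguments of Theorems~\ref{main1}, \ref{main2} and \ref{main3}, the only genuinely new work being the bookkeeping forced by the singularity of $\tilde X_s$ over the non-maximal ramification point $p$. First I would establish that the locus of very stable bundles is dense in $\al U_X^{\sigma,\tau}(r,0)$, exactly as in Theorem~\ref{denseverystable}. The involution $E\mapsto\sigma^*E$ lifts to the cotangent-bundle involution on $T^*\al U_X(r,0)$, which preserves the tautological symplectic form, so its fixed locus $T^*\al U_X^{\sigma,\tau}(r,0)\subset T^*\al U_X(r,0)$ is a symplectic subspace. The intersection $\Lambda_{X,r}\cap T^*\al U_X^{\sigma,\tau}(r,0)$ is then the restriction of the Lagrangian nilpotent cone to a symplectic subspace, hence isotropic; since it contains the zero section $\al U_X^{\sigma,\tau}(r,0)$ its dimension is at least, hence exactly, $\tfrac12\dim T^*\al U_X^{\sigma,\tau}(r,0)$, so it is Lagrangian. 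Consequently the very stable locus is dense and for general $E$ the map $\sr H_E\colon H^0(X,E\otimes E^*\otimes K_X)_+\ra W^{\sigma,\tau}$ is dominant. Combined with the already-proved equality $\dim W^{\sigma,\tau}=\dim\al U_X^{\sigma,\tau}(r,0)$, this makes $\Pi\colon T^*\al U_X^{\sigma,\tau}(r,0)\ra\al U_X^{\sigma,\tau}(r,0)\times W^{\sigma,\tau}$ dominant, so that for general $s\in W^{\sigma,\tau}$ the restriction $\sr H^{-1}(s)\ra\al U_X^{\sigma,\tau}(r,0)$ is dominant.

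Next I would read off $\sr H^{-1}(s)$ through the spectral correspondence. By Proposition~\ref{bnrprop} the fiber is in bijection with an open set of rank-one torsion-free $\al O_{\tilde X_s}$-modules $\sr F$ with $E=q_*\sr F$, and $\sigma$-invariance of $E$ translates into $\tilde\sigma$-invariance of $\sr F$ together with the appropriate linearisation. Exactly as in Theorem~\ref{main2}, the constraint at the singular fibre forces the generic relevant $\sr F$ to be $\ak q_*L$ for a line bundle $L$ on the normalisation $\ak q\colon\hat X_s\ra\tilde X_s$, so that with $\hat q=q\circ\ak q$ one has $E=q_*\ak q_*L=\hat q_*L$. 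The pushforward map is well defined: using $\sigma\circ\hat q=\hat q\circ\hat\sigma$ one gets $\sigma^*(\hat q_*L)\cong\hat q_*(\hat\sigma^*L)\cong\hat q_*L$ for $\hat\sigma$-invariant $L$, and $\hat q_*L$ is locally free of rank $r$ on the smooth curve $X$; conversely the invariance of $E$ pins down the $\hat\sigma$-invariance of the associated $L\in\text{Pic}(\hat X_s)$.

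The decisive point is the identification of the type, following Theorem~\ref{main3}. Over $p$ the fibre $\hat q^{-1}(p)$ consists of the $r-2k_p$ points lying above the ordinary singular point $0\in\tilde X_s$, which are fixed by $\hat\sigma$, together with $2k_p$ points forming $k_p$ pairs interchanged by $\hat\sigma$. On $(\hat q_*L)_p\cong\bigoplus_{x\in\hat q^{-1}(p)}L_x$ each interchanged pair contributes a block $\begin{pmatrix}0&1\\1&0\end{pmatrix}\sim\text{diag}(1,-1)$, giving $k_p$ copies of the eigenvalue $-1$, while the $r-2k_p$ fixed points contribute $\pm1$ according to $\hat\tau$. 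Choosing $\hat\tau$ to be $+1$ at every fixed point over $p$ (and fixing it over the maximal points $a\neq p$ precisely as in Theorem~\ref{main3}, according to the parity of $r$) makes the multiplicity of $-1$ equal to $k_p$, so $\hat q_*L$ has type $\tau$. This singles out the component $\text{Pic}^m(\hat X_s)^{\hat\sigma,\hat\tau}$ whose image under $\hat q_*$ lands in $\al U_X^{\sigma,\tau}(r,0)$.

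Finally, since $\dim\text{Pic}^m(\hat X_s)^{\hat\sigma}=g_{\hat Y_s}=\dim\al U_X^{\sigma,\tau}(r,0)$ has already been computed and the generic Hitchin fibre has maximal dimension, the dominant map $\sr H^{-1}(s)\ra\al U_X^{\sigma,\tau}(r,0)$ factors through line bundles of type $\hat\tau$, yielding the dominance of $\text{Pic}^m(\hat X_s)^{\hat\sigma,\hat\tau}\dashrightarrow\al U_X^{\sigma,\tau}(r,0)$. I expect the main obstacle to be the spectral correspondence at the singular fibre: unlike Theorem~\ref{main2}, where the singularities are ordinary nodes, the point over $p$ is an ordinary multiple point of multiplicity $r-2k_p$, possibly larger than $2$. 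One must argue that the generic $\tilde\sigma$-invariant element of $\sr H^{-1}(s)$ of the correct type really equals $\ak q_*L$ for a genuine line bundle $L$ on $\hat X_s$ (rather than a strictly torsion-free sheaf), and that both the $\hat\sigma$-invariance and the eigenvalue bookkeeping at the fixed points survive the passage through $\ak q$; the parity-of-$r$ distinctions in the behaviour of $\hat\sigma$ over the maximal points supply the remaining technical care.
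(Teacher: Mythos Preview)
Your proposal is correct and follows essentially the same route as the paper: the nilpotent-cone argument for density of very stable bundles, dominance of $\Pi$, identification of $\sr H^{-1}(s)$ via Proposition~\ref{bnrprop}, passage to the normalisation $\hat X_s$, and the type computation over $p$ are all exactly what the paper does. The only cosmetic difference is in the direction you run the normalisation step: you phrase it as the generic torsion-free $\sr F$ being of the form $\ak q_*L$ (so that $E=q_*\ak q_*L=\hat q_*L$ is immediate), whereas the paper instead twists $\sr F$ by $\al O_{\tilde X_s}\bigl(-\tfrac{(r-2k_p)(r-2k_p-1)}{2}x_0\bigr)$ and pulls back along $\ak q$ to obtain $L$; both manoeuvres are two sides of the same local correspondence at an ordinary multiple point, and both are stated at the same level of detail. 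Your identification of this passage as the main technical obstacle is apt, and your fibrewise eigenvalue bookkeeping for $\hat\tau$ (trivial over the $r-2k_p$ fixed points above $p$, and handled as in Theorem~\ref{main3} over the remaining maximal points according to the parity of $r$) matches the paper's construction verbatim.
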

\begin{proof}
First the type $\hat{\tau}$ is constructed as follows: If $r$ is even, $\hat{\sigma}$ has $r-2k_p$   fixed points which are all over $p$, then since $k_p$ is chosen strictly smaller than $[r/2]$, we take the trivial type. If $r$ is odd, then over any ramification point $a$ such that $k_a=\frac{r+1}{2}$ the type is equal $-1$, and over the rest of ramification points other than $p$ the type is $+1$. However over $p$ there are $r-2k_p$ fixed points by $\hat{\sigma}$, the types over these points are all equal $+1$.
  
Now we deduce as in the proof of Theorem \ref{main1} that the map $$\Pi:T^*\al U_X^{\sigma,\tau}(r,0)\lra\al U_X^{\sigma,\tau}(r,0)\times W^{\sigma,\tau}$$ is dominant. So for general $s\in W^{\sigma,\tau}$, the fiber $\sr H^{-1}(s)$ dominates $\al U_X^{\sigma,\tau}(r,0)$. Moreover $\sr H^{-1}(s)$ is identified, by Proposition \ref{bnrprop}, with a set of torsion-free rank one sheaves over $\tilde{X}_s$, which are $\tilde{\sigma}-$invariant. Let $x_0$ be the singular point of $\tilde{X}_s$ over $p$.  Then twisting these torsion-free sheaves with $$\al O_{\tilde{X}_s}(-\frac{(r-2k_p)(r-2k_p-1)}{2}x_0)$$  and pulling them back to $\hat{X}_s$, we identify $\sr H^{-1}(s)$ with the open subset of $\text{Pic}^m(\hat{X}_s)^{\hat{\sigma},\hat{\tau}}$ of line bundles such that $\hat{q}_*L$ is stable, where $m$ is the degree of $(\hat{q}_*\al O_{\hat{X}_s})^*$ ($m=r(r-1)(g_X-1)-\frac{(r-2k_p)(r-2k_p-1)}{2}$). The result follows.
\end{proof}

\section{Appendices}
\subsection{Appendix A}\label{appA}
In this appendix, we construct stable anti-invariant vector bundles. Let $\beta\in J_X[r]$ a  primitive $r-$torsion point of the Jacobian which descends to $Y$, so in particular we assume that the genus $g_Y$ of $Y$ is at least $1$. Denote by $q:X_\beta\lra X$ the associated cyclic \'etale cover of $X$ of degree $r$, and by $\iota$ a generator of the Galois group $\text{Gal}(X_\beta/X)$.
\begin{lemm}
The involution $\sigma:X\ra X$ lifts to an involution $\tilde{\sigma}:X_\beta\ra X_\beta$. Moreover, if $r$ is even, there are two such liftings of $\sigma$ such that one of them has no fixed points, we denote it by $\tilde{\sigma}_-$. 
\end{lemm}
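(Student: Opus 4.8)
The plan is to analyse the lifts of $\sigma$ group-theoretically and then remove the remaining ambiguity using the hypothesis that $\beta$ descends to $Y$.

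First I would record the existence of \emph{some} lift. Since $\pi\circ\sigma=\pi$ and $\beta=\pi^*\beta_0$ for a line bundle $\beta_0$ on $Y$, we have $\sigma^*\beta=\sigma^*\pi^*\beta_0=\pi^*\beta_0=\beta$; after rescaling this isomorphism so that it is compatible with the chosen trivialisation of $\beta^{\otimes r}$ (possible because $\sigma^2=\text{id}$), it preserves the equation cutting out $X_\beta$ inside the total space of $\beta$ and hence descends to a lift $\tilde\sigma$ of $\sigma$. The set of all lifts is then a torsor under the Galois group $\gen\iota\cong\bb Z/r\bb Z$, so every lift has the form $\iota^k\tilde\sigma$. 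Since $\iota$ acts on the total space of $\beta$ by fibrewise multiplication by a primitive root of unity and $\tilde\sigma$ is fibrewise linear, the two commute, $\tilde\sigma\iota=\iota\tilde\sigma$; consequently $(\iota^k\tilde\sigma)^2=\iota^{2k}\tilde\sigma^2$, and writing $\tilde\sigma^2=\iota^m$ we see that $\iota^k\tilde\sigma$ is an involution exactly when $2k+m\equiv 0 \bmod r$.

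The key point is that $\tilde\sigma$ can be chosen to be an involution, i.e. $m\equiv 0$. Here I would invoke descent: as $\beta=\pi^*\beta_0$, the cover $q\colon X_\beta\ra X$ is the pullback along $\pi$ of the cyclic cover $\tilde q\colon Y_{\beta_0}\ra Y$ attached to $\beta_0$, so that $X_\beta\cong Y_{\beta_0}\times_Y X$ as $\bb Z/r$-covers of $X$. Because $\sigma$ is a morphism over $Y$, the product involution $\tilde\sigma:=\text{id}_{Y_{\beta_0}}\times\sigma$ is a lift of $\sigma$ with $\tilde\sigma^2=\text{id}$, whence $m\equiv 0$. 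Granting this, for $r$ odd the congruence $2k\equiv 0$ has the unique solution $k=0$, giving a single involutive lift; for $r$ even it has the two solutions $k=0$ and $k=r/2$, giving exactly the two involutions $\tilde\sigma_+:=\tilde\sigma$ and $\tilde\sigma_-:=\iota^{r/2}\tilde\sigma$.

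Finally I would compare their fixed loci. Any fixed point of a lift lies over the ramification locus $R=\text{Fix}(\sigma)$. In the product description a point of $X_\beta$ over $p\in R$ is a pair $(w,p)$ with $w$ over $\pi(p)$, and $\tilde\sigma_+(w,p)=(w,\sigma(p))=(w,p)$, so $\tilde\sigma_+$ fixes the entire fibre $q^{-1}(p)$; then $\tilde\sigma_-=\iota^{r/2}\tilde\sigma_+$ moves every such point by the deck transformation $\iota^{r/2}$, a nontrivial element of $\gen\iota$ of order $2$, which therefore acts freely. Hence $\tilde\sigma_-$ has no fixed points. The main obstacle is the rigour of the descent step in the \'etale case: there $\ker(\pi^*)=\{\al O_Y,\Delta\}$, so $\beta_0^{\otimes r}$ need only be trivial \emph{or} equal to $\Delta$, and in the latter case $\beta_0$ has order $2r$ and $Y_{\beta_0}\ra Y$ is a degree-$2r$ cyclic cover; the fibre product then splits as two copies of $X_\beta$, and one must descend $\text{id}\times\sigma$ to a single component, equivalently verify directly that the parity invariant $m\bmod 2$ vanishes. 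Settling this parity is the crux on which both the existence of an involutive lift and the freeness of $\tilde\sigma_-$ ultimately rest.
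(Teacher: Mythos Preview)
Your argument is correct and follows a genuinely different route from the paper. The paper works concretely inside the ruled surface $\bb P(\al O_X\oplus\beta^{-1})$: the curve $X_\beta$ is cut out by $x^r-1=0$, the positive linearisation on $\beta$ gives (by the mechanism of Proposition~\ref{descentram}) an involution of the ruled surface preserving this locus, and for even $r$ the negative linearisation gives a second lift; its fixed-point-freeness is then read off from the equation, since over $p\in R$ the negative linearisation acts on the fibre by $x\mapsto -x$, whose sole fixed point $x=0$ does not satisfy $x^r=1$.

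You instead classify the lifts group-theoretically, using that any lift commutes with the deck group $\gen\iota$ and that the fibre-product description $X_\beta\cong Y_{\beta_0}\times_Y X$ supplies an honest involution $\tilde\sigma_+=\text{id}\times\sigma$. This buys a little more than the paper: you obtain that there are \emph{exactly} two involutive lifts for $r$ even (and exactly one for $r$ odd), and the fixed-point analysis becomes transparent --- $\tilde\sigma_+$ fixes the whole of $q^{-1}(R)$, while $\tilde\sigma_-=\iota^{r/2}\tilde\sigma_+$ differs from it by a nontrivial deck transformation of an \'etale cover and hence acts freely. The two lifts you produce match the paper's: under the spectral description $\iota$ is fibrewise multiplication by a primitive $r$-th root of unity, so $\iota^{r/2}$ is multiplication by $-1$, i.e.\ passage from the positive to the negative linearisation. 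The \'etale subtlety you flag (when $\beta_0^r=\Delta$ rather than $\al O_Y$) is genuine and affects the paper's argument equally --- the ``positive linearisation'' then fails to preserve $x^r-1$ --- but in the intended application one is free to choose $\beta_0\in J_Y[r]$, which dissolves the issue.
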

\begin{proof}
The curve $X_\beta$ is a spectral curve given by the equation $x^r-1=0$ in the ruled surface $\bb P(\al O_X\oplus \beta^{-1})$. As in the proof of Proposition \ref{descentram}, the positive linearisation on $\beta$ gives an involution $\tilde{\sigma}$ on $X_\beta$ that lifts $\sigma$. If $r$ is even, then the negative linearisation gives also a lifting of $\sigma$. One remarks that $q(\text{Fix}(\tilde{\sigma}))\subset \text{Fix}(\sigma)$, hence if $\pi:X\ra Y$ is \'etale, then $X_\beta\ra X_\beta/\tilde{\sigma}$ is \'etale too. However, if $r$ is even, the negative linearisation has no fixed point because its only fixed point is $0$ and $0$ is not a root of $x^r-1=0$.
\end{proof}

\begin{prop} \label{stabledirectimage}
The line bundles of degree $0$ on $X_\beta$ such that $q_*L$ is not stable are those with a non-trivial stabiliser subgroup of $\gen{\iota}$.
\end{prop}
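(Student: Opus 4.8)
The plan is to show that $q_*L$ fails to be stable if and only if the stabiliser $H:=\{g\in\gen{\iota}\mid g^*L\cong L\}$ of $L$ under $G:=\gen{\iota}=\text{Gal}(X_\beta/X)\cong\bb Z/r$ is nontrivial. First I would record that $q_*L$ is a semistable bundle of slope $0$. Since $q$ is the cyclic \'etale cover attached to $\beta$, one has $q_*\al O_{X_\beta}\cong\bigoplus_{i=0}^{r-1}\beta^{-i}$, a direct sum of degree-$0$ line bundles; hence $\deg(q_*L)=\deg L+\deg(q_*\al O_{X_\beta})=0$ and $\mu(q_*L)=0$. Moreover, as $q$ is Galois and \'etale, $q^*q_*L\cong\bigoplus_{g\in G}g^*L=\bigoplus_{i=0}^{r-1}\iota^{i*}L$, a direct sum of degree-$0$ line bundles and so a polystable bundle of slope $0$. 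Because finite \'etale pullback multiplies slopes by $\deg q$, any destabilising subsheaf of $q_*L$ would pull back to a destabilising subsheaf of $q^*q_*L$; the latter being semistable, $q_*L$ is semistable of slope $0$.

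Next I would compute the endomorphism algebra by adjunction. Using $q^*\dashv q_*$ one gets $\text{End}(q_*L)=\text{Hom}_{X_\beta}(q^*q_*L,L)=\bigoplus_{g\in G}\text{Hom}(g^*L,L)$, and since $g^*L$ and $L$ both have degree $0$ the summand $\text{Hom}(g^*L,L)=H^0(X_\beta,L\otimes(g^*L)^{-1})$ equals $\bb C$ when $g\in H$ and $0$ otherwise. Hence $\dim_{\bb C}\text{End}(q_*L)=|H|$. If $H\neq\{e\}$, choose a non-scalar $\phi\in\text{End}(q_*L)$ and an eigenvalue $\lambda$, so that $\phi-\lambda\,\text{id}$ is nonzero and non-invertible; its image is at once a subsheaf and a quotient of the semistable slope-$0$ bundle $q_*L$, hence a proper nonzero subsheaf of slope $0$, and $q_*L$ is not stable. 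This proves the implication \emph{nontrivial stabiliser $\Rightarrow$ not stable} (equivalently, stability forces $q_*L$ to be simple, hence $H=\{e\}$).

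For the converse I would invoke Galois descent for the \'etale cover $q$: subsheaves of $q_*L$ correspond bijectively to $G$-equivariant subsheaves of $q^*q_*L=\bigoplus_{i}\iota^{i*}L$ carrying its canonical linearisation, under which $\iota$ permutes the $r$ summands as a single $r$-cycle, and this correspondence multiplies slopes by $\deg q=r$. Since $q_*L$ is semistable of slope $0$, it is stable precisely when $\bigoplus_i\iota^{i*}L$ has no proper nonzero $G$-invariant saturated subbundle of slope $0$. When $H=\{e\}$ the line bundles $\iota^{0*}L,\dots,\iota^{(r-1)*}L$ are pairwise non-isomorphic, so this polystable bundle is a sum of \emph{distinct} stable objects of slope $0$; every slope-$0$ subbundle is therefore of the form $\bigoplus_{i\in S}\iota^{i*}L$, and $G$-invariance forces the index set $S$ to be a union of $G$-orbits on $\{0,\dots,r-1\}$. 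As $\iota$ acts transitively, $S=\emptyset$ or $S=\{0,\dots,r-1\}$, so no proper slope-$0$ subsheaf exists and $q_*L$ is stable.

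The main obstacle is this converse direction. The endomorphism computation only shows that $q_*L$ is \emph{simple} when $H=\{e\}$, and simplicity does not imply stability for a strictly semistable bundle (a non-split extension of two distinct stable bundles of equal slope is simple but not stable). The genuine content is therefore the semistability of $q_*L$ together with the descent-based classification of its slope-$0$ subbundles, which is what rules out such strictly semistable behaviour; the two facts I must state with care are that finite \'etale pullback reflects semistability and that $G$-equivariant subsheaves of $\bigoplus_i\iota^{i*}L$ descend, with slopes divided by $r$, to subsheaves of $q_*L$.
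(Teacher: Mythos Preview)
Your proof is correct. Both you and the paper hinge on the decomposition $q^*q_*L\cong\bigoplus_{i=0}^{r-1}\iota^{i*}L$, but the two arguments diverge in how they exploit it. For the implication \emph{nontrivial stabiliser $\Rightarrow$ not stable}, you compute $\dim\text{End}(q_*L)=|H|$ via adjunction and conclude non-simplicity, whereas the paper observes that if $(\iota^k)^*L\cong L$ then the partial sum $\iota^*L\oplus\cdots\oplus(\iota^k)^*L$ is $\iota$-invariant and descends to a proper subbundle of $q_*L$ of degree $0$. For the converse, you invoke Galois descent to identify subbundles of $q_*L$ with $G$-equivariant subbundles of $\bigoplus_i\iota^{i*}L$ and then argue that, when the summands are pairwise non-isomorphic, any slope-$0$ equivariant subbundle is indexed by a union of $G$-orbits on $\{0,\dots,r-1\}$, hence trivial. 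The paper instead takes a destabilising stable $F\subset q_*L$, notes that $q^*F$ is a partial sum $\bigoplus_{j\in J}\iota^{j*}L$, and then uses adjunction $\text{Hom}(q^*F,\iota^{k*}L)=\text{Hom}(F,q_*\iota^{k*}L)=\text{Hom}(F,q_*L)\neq 0$ to produce, for every $k$, a surjection $q^*F\twoheadrightarrow\iota^{k*}L$; picking $k\notin J$ forces $\iota^{j*}L\cong\iota^{k*}L$ for some $j\in J$, giving a nontrivial element of the stabiliser. Your route is more structural (descent and the abelian category of slope-$0$ semistables), while the paper's is a direct adjunction trick that avoids stating descent explicitly; both are short and neither requires more input than the other.
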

\begin{proof}
This is true for any Galois cover, it is proved in the (unpublished) paper of Beauville entitled "On the stability of the direct image of a generic vector bundle".\\ 
Let $L\in\text{Pic}^0(X_\beta)$ such that $q_*L$ is not stable. Let $F\hookrightarrow q_*L$ be a stable subbundle  of degree $0$, it follows $$q^*F\hookrightarrow q^*q_*L=L\oplus \iota^*L\oplus\cdots\oplus(\iota^{r-1})^*L,$$
hence $q^*F$ is of the form $\bigoplus_{j\in J}(\iota^j)^*L$ for some $J\subsetneqq \{0,\cdots,r-1\}$. In particular $q_*L$ is semistable. On the other hand, The adjunction formula gives a non-zero map $q^*F\ra (\iota^k)^*L$ for any $k$. As $q^*F$ is semistable of degree $0$, this map is surjective. Hence  $\bigoplus_{j\in J}(\iota^j)^*L\ra (\iota^k)^*L$ is surjective for any $k$, it follows that there exists $k\in\{1,\cdots,r-1\}$ such that $(\iota^k)^*L\cong L$. So $\iota ^k$ is in the stabiliser of $L$. \\
Conversely, let $L$ such that $(\iota^k)^*L\cong L$ for some $0<k<r$, it follows that $\iota^*L\oplus\cdots\oplus (\iota^k)^*L$ is $\iota-$invariant, so it descends to a vector bundle, say $F$, on $X$,  as $\text{deg}(F)=0$, by adjunction, we deduce that $F\hookrightarrow q_*L$, hence $q_*L$ is not stable.
\end{proof}
Now we can construct some stable anti-invariant vector bundles.
\begin{prop}
\begin{enumerate}
\item There exist stable $\sigma-$symmetric and $\sigma-$alternating anti-invariant vector bundles. 
\item The determinant maps $$\text{det}:\al U_X^{\sigma,+}(r)\ra P^+=\text{Nm}^{-1}(\al O_Y),$$ $$\text{det}:\al U_X^{\sigma,-}(r)\ra P^-=\begin{cases} \text{Nm}^{-1}(\al O_Y)& r\equiv0\mod2\\ \text{Nm}^{-1}(\Delta)& r\equiv1\mod2 \text{ and } \pi \text{ \'etale} \end{cases},$$ are surjective.
\end{enumerate}
\end{prop}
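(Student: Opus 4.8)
The plan is to realise all the required bundles as direct images $q_*L$ from the cyclic \'etale cover $q:X_\beta\to X$ introduced above, reading off both the type and the determinant from the line bundle $L$ on $X_\beta$.

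For existence (part (1)) I would fix the lifting $\tilde\sigma$ of $\sigma$ to $X_\beta$ supplied by the preceding lemma: the lifting from the positive linearisation on $\beta$ for the $\sigma$-symmetric case, and the fixed-point-free lifting $\tilde\sigma_-$ (so $r$ even) for the $\sigma$-alternating case. Consider the line bundles $L$ with $\tilde\sigma^*L\cong L^{-1}$, a non-empty positive-dimensional translate of the Prym variety of $X_\beta\to X_\beta/\tilde\sigma$. Since $q$ is \'etale, $(q_*L)^*\cong q_*(L^{-1})$, whence
$$\sigma^*q_*L\cong q_*(\tilde\sigma^*L)\cong q_*(L^{-1})\cong(q_*L)^*,$$
so $q_*L$ is anti-invariant. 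By Proposition \ref{stabledirectimage} the bundle $q_*L$ fails to be stable only when $L$ has non-trivial stabiliser in $\gen{\iota}$; as this locus is a proper closed subset, I would verify that the anti-invariant family is not contained in it (by exhibiting one $L$ in the family with trivial stabiliser, or by a dimension count), so that a general such $L$ gives a stable $q_*L$. The type is then determined as in Lemma \ref{normlinebundle}: the isomorphism $\sigma^*q_*L\cong(q_*L)^*$ is induced by the canonical pairing $L\otimes\tilde\sigma^*L\to\al O_{X_\beta}$, which is symmetric when $\tilde\sigma$ has fixed points and alternating when $\tilde\sigma=\tilde\sigma_-$ is fixed-point-free. (For $r$ odd and $\pi$ ramified the Observation already excludes the alternating type, consistently with the non-existence of $\tilde\sigma_-$ there.)

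For the surjectivity of $\det$ (part (2)) I would combine two mechanisms. The first is the tensoring action: for a $\sigma$-symmetric anti-invariant line bundle $\lambda\in\text{Nm}^{-1}(\al O_Y)$ and an anti-invariant $E$, the bundle $E\otimes\lambda$ is again anti-invariant of the same type, and $\det(E\otimes\lambda)=\det(E)\otimes\lambda^{r}$; moreover $\det(E)\in P^{\pm}$ automatically, since $\sigma^*\det E\cong(\det E)^{-1}$. Hence the image of $\det$ is non-empty by part (1) and is stable under translation by the subgroup $\{\lambda^{r}\}$. When $\pi$ is ramified this subgroup is all of the connected abelian variety $\text{Nm}^{-1}(\al O_Y)$, since $[r]$ is surjective, and surjectivity follows; when $\pi$ is \'etale and $r$ is odd, the analogous computation on the component group $\bb Z/2$ shows $\{\lambda^{r}\}$ meets both components, and again the image fills $P^{\pm}$.

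The remaining, and hardest, case is $\pi$ \'etale with $r$ even, where $\{\lambda^{r}\}$ lies in the identity component and the tensoring action alone fills only one component of $P^{\pm}$. Here I would return to the direct-image construction: since $\det(q_*L)=\text{Nm}_{X_\beta/X}(L)\otimes c$ for the fixed twist $c=\det(q_*\al O_{X_\beta})$, the problem reduces to showing that $\text{Nm}_{X_\beta/X}$ sends the anti-invariant family $\{\tilde\sigma^*L\cong L^{-1}\}$ onto a subset of $\{M:\sigma^*M\cong M^{-1}\}$ meeting both connected components. The main obstacle is precisely this norm computation: one must track how $\text{Nm}_{X_\beta/X}$ acts on the component group of the source Prym, which I would analyse with the norm-map and dual-variety diagram techniques of Proposition \ref{NCCI}, choosing $L$ in the two components of the anti-invariant locus on $X_\beta$ to hit the two components of $P^{\pm}$.
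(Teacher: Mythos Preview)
Your overall strategy---produce anti-invariant bundles as $q_*L$ from the cyclic \'etale cover $X_\beta\to X$, then use tensoring by $P$ and a norm computation for surjectivity---is exactly the paper's, and your treatment of part~(2) matches the paper almost verbatim, including the identification of the hard case ($\pi$ \'etale, $r$ even) and the reduction to surjectivity of $\text{Nm}_{X_\beta/X}$ on the relevant Prym loci.

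The genuine gap is in part~(1), in your criterion for the type of $q_*L$. You assert that the pairing $L\otimes\tilde\sigma^*L\to\al O_{X_\beta}$ is ``symmetric when $\tilde\sigma$ has fixed points and alternating when $\tilde\sigma=\tilde\sigma_-$ is fixed-point-free.'' The first clause is fine: when $X_\beta\to X_\beta/\tilde\sigma$ is ramified, Lemma~\ref{normlinebundle} forces every anti-invariant $L$ to be $\tilde\sigma$-symmetric. But the second clause is false as stated. When $\tilde\sigma_-$ is fixed-point-free, the quotient map $X_\beta\to Z_\beta=X_\beta/\tilde\sigma_-$ is \'etale, and an $L$ with $\tilde\sigma_-^*L\cong L^{-1}$ has norm either $\al O_{Z_\beta}$ or the $2$-torsion class $\Delta_\beta$; by Lemma~\ref{normlinebundle} it is $\tilde\sigma_-$-symmetric in the first case and $\tilde\sigma_-$-alternating only in the second. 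So to produce a $\sigma$-alternating $q_*L$ you must take $L\in\text{Nm}_{X_\beta/Z_\beta}^{-1}(\Delta_\beta)$, as the paper does, not a general anti-invariant $L$. A second consequence of the same error: your alternating construction invokes $\tilde\sigma_-$, which exists only for $r$ even, so you never construct $\sigma$-alternating bundles when $r$ is odd and $\pi$ is \'etale. The paper covers that case separately, taking $L\in\text{Nm}_{X_\beta/Y_\beta}^{-1}(\Delta_\beta)$ for the (automatically fixed-point-free) lifting $\tilde\sigma$, where now $\Delta_\beta$ is the $2$-torsion class of $X_\beta\to Y_\beta$.
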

\begin{proof}
\begin{enumerate}
\item We denote $Y_\beta=X_\beta/\tilde{\sigma}$ and $Z_\beta=X_\beta/\tilde{\sigma}_-$ if $r$ is even. By Proposition \ref{stabledirectimage} we deduce that a general element in $\text{Nm}_{X_\beta/Y_\beta}^{-1}(\al O_{Y_\beta})$ has a stable direct image which is $\sigma-$symmetric. Let $\Delta_\beta$ is the $2-$torsion point attached to $X_\beta\ra Z_\beta$, then a general element in $\text{Nm}_{X_\beta/Z_\beta}^{-1}(\Delta_\beta)$ has a stable direct image which is $\sigma-$alternating, and if $r$ is odd and $\pi:X\ra Y$ is \'etale, a general element in $\text{Nm}_{X_\beta/Y_\beta}^{-1}(\Delta_\beta)$ has a stable direct image which is also $\sigma-$alternating.\\
\item If $\pi$ is ramified, or $\pi$ is \'etale and $r$ is odd, then this is clear due to taking the tensor product of a fixed anti-invariant vector bundle by elements of $P^\pm$. Assume that $\pi$ is \'etale and $r$ is even. By definition $P^+=P^-$, so we denoted just by $P$. Taking the tensor product by elements of $P$ does not make the determinant surjective, so we need to prove the existence of stable vector bundles whose determinants are in both connected components of $P$. But one remarks that $\text{Nm}_{X_\beta/X}:\al P^\pm\ra P$ is surjective, where $\al P^+=\text{Nm}_{X_\beta/Y_\beta}^{-1}(\al O_{Y_\beta})$  and $\al P^-=\text{Nm}_{X_\beta/Y_\beta}^{-1}(\Delta_\beta)$, as we have $\text{det}(q_*L)=\text{Nm}_{X_\beta/X}(L)\otimes \beta^{r(r-1)/2}$, and because $\beta^{r(r-1)/2}\in P$, this ends the proof. 
\end{enumerate}
\end{proof}

\subsection{Appendix B}
In this appendix, we give another proof of the irreducibility of $\al U_X^{\sigma,+}(2)$ and the fact that $\al U_X^{\sigma,-}(2)$ has two connected components in the ramified case. \\ Assume that $\pi:X\ra Y$ is ramified. In this case every vector bundle $E$ over $ X$ with trivial determinant is a $\text{Sp}_2-$bundle, that's $E\cong E^*$ with a symplectic form, and it admits a symmetric one if and only if it is polystable.\\
We see in this particular case that $\sigma-$invariant bundles are the same as $\sigma-$anti-invariant bundles, so let $(E,\phi)$ be a stable $\sigma-$invariant bundle with trivial determinant. The triviality of the determinant implies that the type  of $E$ must be of the form $$\tau=(\mathbf{0},\cdots,\mathbf{0}),$$ $$\text{or  } \tau=(A_1,\cdots,A_{2n}), \text{with } A_i\in\{\mathbf{+1},\mathbf{-1}\},$$
where $\mathbf{0}=\begin{pmatrix}
-1 &0 \\0&1
\end{pmatrix}$ and $\pm\mathbf{1}=\begin{pmatrix}
\pm1& 0\\ 0 & \pm1
\end{pmatrix}$. \\
We have $$\psi:\sigma^* E\stackrel{\phi}{\longrightarrow} E\stackrel{q}{\longrightarrow} E^*,$$ 
where $q$ is a symplectic form, let $\psi=q\circ \phi$. It is not difficult to see that if $E$ has a type $(\mathbf{0},\cdots,\mathbf{0})$, then $\psi$ is $\sigma-$symmetric, and it is $\sigma-$alternating otherwise. \\
In particular, one deduces that $\al{SU}_X^{\sigma,+}(2)$ is connected, and $\al {SU}_X^{\sigma,-}(2)$ has $2^{2n-1}$ connected components.
 
 Moreover, we have surjective maps $$\al{SU}_X^{\sigma,+}(2)\times P\ra \al U_X^{\sigma,+}(2),$$ $$\al{SU}_X^{\sigma,-}(2)\times P\ra \al U_X^{\sigma,-}(2).$$
This proves the irreducibility of $\al U_X^{\sigma,+}(2)$.\\
The group $P[2]$ acts naturally on the set of connected components of $\al{SU}_X^{\sigma,-}(r)$ in the following way: for $\lambda\in P[2]$ of type $\upsilon=(\varepsilon_1,\cdots,\varepsilon_{2n})$, where $\varepsilon_i\in\{\pm1\}$, then for a type $\tau=(A_1,\cdots,A_{2n})$ attached to a connected component of $\al{SU}_X^{\sigma,-}(r)$, we have $$\upsilon\cdot\tau=(\varepsilon_1A_1,\cdots,\varepsilon_{2n}A_{2n}).$$   Furthermore this action is free modulo elements of $\pi^*J_Y[2]$. Since $\text{card}(P[2]/\pi^*J_Y[2])=2^{2n-2}$, we deduce that this action has two orbits. It follows in particular that $\al U_X^{\sigma,-}(2)$ has two connected components.

\bibliographystyle{alpha}
\bibliography{bib}
\end{document}